\documentclass[12pt]{amsart}
\textwidth=14.5cm
\oddsidemargin=1cm
\evensidemargin=1cm
\usepackage{amsmath}
\usepackage{amsxtra}
\usepackage{amscd}
\usepackage{amsthm}
\usepackage{amsfonts}
\usepackage{amssymb}
\usepackage{eucal}
\usepackage{mathrsfs}
\usepackage{mathptm}
\usepackage{verbatim}

\input prepictex
\input pictex
\input postpictex


\newtheorem{theorem}{Theorem}[section]

\newtheorem{lemma}[theorem]{Lemma}

\newtheorem{corollary}[theorem]{Corollary}

\theoremstyle{definition}
\newtheorem{definition}[theorem]{Definition}
\newtheorem{example}[theorem]{Example}

\theoremstyle{statement}
\newtheorem{statement}[theorem]{Statement}

\theoremstyle{notation}

\theoremstyle{remark}
\newtheorem{remark}[theorem]{Remark}

\numberwithin{equation}{section}


\newcommand{\C}{{\mathbb C}}
\newcommand{\Z}{{\mathbb Z}}

\newcommand{\cO}{{\mathcal O}}

\newcommand{\cR}{{\mathcal R}}

\newcommand{\fg}{{\mathfrak g}}
\newcommand{\fb}{{\mathfrak b}}
\newcommand{\fh}{{\mathfrak h}}
\newcommand{\gl}{{\mathfrak{g}}}
\newcommand{\fp}{{\mathfrak p}}
\newcommand{\fl}{{\mathfrak l}}
\newcommand{\fu}{{\mathfrak u}}

\def\tr{{\rm tr}}
\def\Ext{{\rm Ext}}
\def\vs#1{}

\def\zi{\in}
\def\VBL{K(\l)}

\def\sl{\it}
\def\bi{{\textbf{\textit{i\,}}}}
\def\bj{{\textbf{\textit{j\,}}}}
\def\a{\alpha}
\def\g{\gamma}
\def\th{\theta}
\def\G{\Gamma}

\def\L{\Lambda}

\def\l{\lambda}
\def\b{\beta}

\def\D{\Delta}
\def\d{\delta}
\def\cp#1{{\dot #1}}

\def\es{\epsilon}

\def\Lra{\Longleftrightarrow}

\def\bs{\backslash}

\def\sc{\scriptstyle}
\def\ssc{\scriptscriptstyle}
\def\dis{\displaystyle}
\def\LE{_{\rm L}}
\def\RI{_{\rm R}}
\def\BO{_{\rm B}}
\def\bs{\backslash}
\def\setminus{\bs}
\def\equa#1#2{
\begin{equation}\label{#1}\mbox{$#2$}\end{equation}}
\def\equan#1#2{$$\mbox{$#2$}$$}
\newcommand{\bea}{\begin{eqnarray}}
\newcommand{\eea}{\end{eqnarray}}
\newcommand{\be}{\begin{eqnarray*}}
\newcommand{\ee}{\end{eqnarray*}}
\newcommand{\U}{{\rm U}}
\def\path#1{\raisebox{-2pt}{\mbox{{\Large$\Box$}\raisebox{2pt}
{$\!\!\!\!\!\!#1\,{\sc\,}$}}}}
\def\rrar{\rightarrow}
\def\rar{\rightarrow}
\def\llar{\leftarrow}
\def\lar{\leftarrow}
\def\drar{\raisebox{1pt}{\small\mbox{$\ssc-\cdots\rar$}}}
\def\dlar{\raisebox{1pt}{\small\mbox{$\ssc\lar\cdots-$}}}

\usepackage{CJK}
\begin{document}
%
%
%
%
%

\title[Generalised Jantzen filtration of Lie superalgebras]
{Generalised Jantzen filtration of Lie superalgebras I.}
\author[Yucai Su]{Yucai Su}
\address{Department of Mathematics,
University of Science and Technology of
China, Hefei, China}
\email{ycsu@ustc.edu.cn}
\author[R.B. Zhang]{R.B. Zhang}
\address{School of Mathematics and Statistics,
University of Sydney, Sydney, Australia}
\email{rzhang@maths.usyd.edu.au}
\begin{abstract}
A Jantzen type filtration for generalised Varma modules
of Lie superalgebras is introduced. In the case of type I Lie superalgebras,
it is shown that the generalised Jantzen filtration
for any Kac module is the unique Loewy filtration,
and the decomposition numbers of the layers of the filtration
are determined by the coefficients of inverse Kazhdan-Lusztig
polynomials. Furthermore, the length of the Jantzen filtration for any Kac module
is determined explicitly in terms of the degree of atypicality
of the highest weight.
These results are applied to obtain a detailed description
of the submodule lattices of Kac modules.
\end{abstract}
\maketitle
\tableofcontents 
\section{Introduction}\label{introduction}

In the late 1970s, Jantzen introduced a filtration \cite{J, J1} for
Verma modules over semi-simple complex Lie algebras, which now bears
his name. He also formulated precise conjectures on the Jantzen
filtration, which turned out to be closely related to (in fact
\cite{GJ}, imply) the Kazhdan-Lusztig conjecture \cite{KL}. The
study of the Kazhdan-Lusztig and Jantzen conjectures was foci of
representation theory in the 1980s. The Kazhdan-Lusztig conjecture
was proven by Beilinson and Bernstein \cite{BB1} and by Brylinski
and Kashiwara \cite{BK} independently, and the Jantzen conjectures
were also settled in the affirmative by Beilinson and Bernstein
\cite{BB}. These developments are among the most important
achievements in representation theory in recent times.

The Jantzen conjectures are as deep as the Kazhdan-Lusztig
conjecture (this was already indicated in the early work \cite{GJ}).
Their proof required far generalisations of the geometric techniques
used in the proof of the Kazhdan-Lusztig conjecture. In very brief
terms, the essential idea of the proof in \cite{BB} is to enrich the
relevant category of perverse sheaves with extra structures. This
then allows one to interpret the Jantzen filtration as a weight
filtration in the sense of Gabber on the side of perverse sheaves.

The Jantzen filtration has been generalised to other contexts
\cite{A1, A} by Andersen. There is also a close relationship between
the Jantzen filtration for Verma modules and Koszul grading in the
context of category $\mathcal{O}$ developed in the influential paper
\cite{BGS} of Beilinson, Ginzburg and Soergel. For recent
developments along these lines, we refer to \cite{S, Str} and
references therein.

In the present paper and its sequel, we introduce a Jantzen type
filtration for generalised Verma modules of classical Lie
superalgebras \cite{K, Sch} over $\C$ and study its properties. For
each such Lie superalgebra, we take the upper triangular maximal
parabolic subalgebra with a purely even Levi subalgebra. Then the
generalised Verma modules under study are those induced by finite
dimensional irreducible modules over the maximal parabolic
subalgebra, and all generalised Verma modules in this paper will be
assumed to be of this kind. For type I Lie superalgebras (that is,
$\mathfrak{osp}_{2|2n}$ and the general and special linear
superalgebras), such generalised Verma modules are finite
dimensional and are usually referred to as Kac modules. However, for
type II Lie sueparlgebras, such generalised Verma modules are always
infinite dimensional.

The Jantzen type filtration for generalised Verma modules of Lie
superalgebras is defined in essentially the same way as the original
Jantzen filtration of ordinary Lie algebras. In particular, we shall
closely follow \cite{GJ, S} to work over the power series ring
$T:=\C[[t]]$ in the variable $t$. We first construct a filtration
for each generalised Verma module over $T$ by using a natural
non-degenerate contravariant $T$-bilinear form defined on it. Then
by specialising to the field of complex numbers, we obtain the
generalised Jantzen filtration for the corresponding generalised
Verma module over $\C$. Details of the construction are given in
Section \ref{filtration-subsec}. We also introduce polynomials in
one variable with the coefficients being decomposition numbers of
the consecutive quotients of a generalised Jantzen filtration (see
\eqref{J-poly}). For easy reference, we call them Jantzen
polynomials.

One of the main questions to be addressed is whether the consecutive
quotients of a generalised Jantzen filtration are semi-simple. If
the answer is affirmative, we shall also determine whether it is a
Loewy filtration. The other main question to be addressed is whether
the Jantzen polynomials are equal to the inverse Kazhdan-Lusztig
polynomials \cite{Se96} of the Lie superalgebra defined in terms of
Kostant cohomology. More precise descriptions of the questions are
given in Statement \ref{main-1} and Statement \ref{main-2}.

Investigations into the above questions were motivated by the
Jantzen conjectures, and also prompted by the examples
$\mathfrak{gl}_{m|n}$ for $n=1, 2$, for which both questions have
affirmative answers. This is quite trivial to see for all Kac
modules over $\mathfrak{gl}_{m|1}$ and for the typical and singly
atypical Kac modules over $\mathfrak{gl}_{m|2}$. The other Kac
modules over $\mathfrak{gl}_{m|2}$ have the same structure as the
doubly atypical Kac modules over $\mathfrak{gl}_{2|2}$ by a result
of Serganova \cite[Theorem 2.6]{Se98}. For the latter algebra, the
Jantzen filtration for every Kac module can be worked out
explicitly, and the inverse Kazhdan-Lusztig polynomials were also
known \cite{VZ}. Inspecting the results one sees that the above
questions have affirmative answers in this case as well.

Here we shall deal with the Jantzen filtration of the type I Lie
superalgebras; the type II Lie superalgebras will be treated in the
sequel of this paper.

We show that for $\fg$ being a type I Lie superalgebra, the Jantzen
filtration for any Kac module over $\fg$ is a Loewy filtration. The
length of the Jantzen filtration is also determined explicitly in
terms of the degree of atypicality of the highest weight (see the
beginning of Subsection \ref{cate-O} for explanation of
terminology). In the case $\fg=\mathfrak{osp}_{2|2n}$, a Kac module
can at most have two composition factors.  From this one can easily
deduce that Kac modules for $\mathfrak{osp}_{2|2n}$ are rigid. By a
result of Brundan and Stroppel \cite[IV]{BS}, Kac modules for
$\fg=\mathfrak{gl}_{m|n}$ (and thus also $\mathfrak{sl}_{m|n}$) are
rigid. Therefore, the Jantzen filtration for a Kac module over $\fg$
is the unique Loewy filtration, which necessarily coincides with the
socle filtration and radical filtration. These results are
summarised in Theorem \ref{length}, Theorem \ref{Cn} and Theorem
\ref{semi-simple}.

We also show that for type I Lie superalgebras, the Jantzen
polynomials coincide with the inverse Kazhdan-Lusztig polynomials
(see Theorem \ref{Cn} for $\mathfrak{osp}_{2|2n}$ and Theorem
\ref{KL} for $\mathfrak{gl}_{m|n}$). In the case of
$\mathfrak{osp}_{2|2n}$, a formula for the Kazhdan-Lusztig
polynomials was given in \cite[Corollary 6.4]{Z}, from which one can
easily deduce a formula for the inverse Kazhdan-Lusztig polynomials.
The Jantzen polynomials are also easy to write down in explicit
form. Inspecting the results, we immediately see that the Jantzen
polynomials are equal to the inverse Kazhdan-Lusztig polynomials. In
the type $A$ case, we make use of the ``super duality" conjectured
in \cite{CWZ} and proved very recently in \cite{CL}\cite[IV]{BS} to
transcribe the problem to the side of the ordinary general linear
algebra. Then we deduce Theorem \ref{KL} from results obtained by
Collingwood, Irving and Shelton \cite{CIS}, Boe and Collingwood
\cite{BC} and Irving \cite{I} on Loewy filtrations of generalised
Verma modules for the general linear algebra.

We also study the submodule lattices of Kac modules. By using
results on the Jantzen filtration combined with combinatorics of
weight diagrams \cite{BS, GS}, we obtain a detailed description of
the chains in the submodule lattices of Kac modules. In particular,
a necessary and sufficient condition is given for one indecomposable
submodule to cover another in the submodule lattice of a Kac module
in Theorem \ref{theo-primitive}. We should point out that Hughes,
King and van der Jeugt conjectured an array of structural properties
of Kac modules twenty years ago \cite{Private}, but did not publish
their findings. Their conjectures were based on extensive
computations and included Theorem \ref{theo-primitive}.

Let us briefly comment on the Jantzen filtration of type II Lie
superalgebras. In the case of $\mathfrak{osp}_{m|2}$, one can deduce
from results on the structure of the generalised Kac modules which
were studied in \cite{SZ3} that consecutive quotients of their
Jantzen filtrations are semi-simple. In fact both Statement
\ref{main-1} and Statement \ref{main-2} hold for
$\mathfrak{osp}_{m|2}$. As the techniques required for studying the
Jantzen filtration of the type II Lie superalgebras are quite
different from what used in this paper, we shall present the full
treatment for type II Lie superalgebras in a separate publication.

The organisation of the paper is as follows. In Section
\ref{filtration}, we introduce the generalised Jantzen filtration
for a class of generalised Verma modules of Lie superalgebras, and
state the main problems (whether Statement \ref{main-1} and
Statement \ref{main-2} are true) to be addressed in this paper and
the sequel. In Section \ref{type-I}, we show that both Statement
\ref{main-1} and Statement \ref{main-2} are true for the type I Lie
superalgebras, thus gaining a thorough understanding of the layers
of the Jantzen filtration for Kac modules. A technical results
(Lemma \ref{chain-lemma}) is used in Section \ref{type-I}, which we
prove in Section \ref{chain-lemma+1}. Finally in Section
\ref{lattices}, we apply the results of Section \ref{type-I} to
study the submodule lattices of Kac modules. The main result
obtained is Theorem \ref{theo-primitive}.

\section{Jantzen filtration of Lie superalgebras}\label{filtration}

\subsection{Deformed parabolic category $\cO$}\label{cate-O}
Given any complex Lie superalgebra $\mathfrak{a}$, we let
$\mathfrak{a}_{\bar 0}$ and $\mathfrak{a}_{\bar 1}$ be the even and
odd subspaces respectively, and denote by $\U(\mathfrak{a})$ its
universal enveloping algebra over the complex number field $\C$. Let
$\fg=\fg_{\bar 0}\oplus\fg_{\bar 1}$ be either the complex general
linear superalgebra $\mathfrak{gl}_{m|n}$, or a finite dimensional
classical simple Lie superalgebra \cite{K, Sch} over $\C$. Let
$\fh\subset\fg_{\bar 0}$ be a Cartan subalgebra of $\fg$, and we
choose the distinguished Borel subalgebra $\fb$ containing $\fh$
such that the corresponding set $\Pi$ of simple roots contains a
unique odd simple root $\alpha_s$ \cite[Table VI]{K} (also see
\cite{Sch}). Denote by $\Delta^+_{\bar 0}$ and $\Delta^+_{\bar 1}$
respectively the sets of even and odd positive roots with respect to
$\fb$. Let $\rho_0 = \frac{1}{2}{\sum_{\alpha\in\Delta_{\bar
0}}\alpha}$, $\rho_1=\frac{1}{2}{\sum_{\gamma\in \Delta_{\bar
1}}\gamma}\,$ and $\rho=\rho_0-\rho_1$. Set $\Delta^+=\Delta^+_{\bar
0}\cup\Delta^+_{\bar 1}$ and $\Delta=-\Delta^+\cup\Delta^+$.

We denote $\Delta^+_1=\left\{\gamma\in \Delta^+_{\bar 1} \mid
2\gamma \not\in \Delta^+_{\bar 0}\right\}$. Given $\mu\in \fh^*$, if
there exists $\gamma\in \Delta^+_1$ such that $(\mu+\rho,
\gamma)=0$, we say that $\mu$ is atypical and $\gamma$ is an
atypical root of $\mu$. The degree $\sharp(\mu)$ of atypicality of
$\mu$ is the maximal number of its mutually orthogonal atypical
roots in $\Delta^+_1$. If $\sharp(\mu)=0$, we say that the weight
$\mu$ is typical.

For each root $\alpha\in\Delta$, we denote by $\fg_\alpha$ the root
space associated to it. Let $\fp$ be the parabolic subalgebra of
$\fg$ generated by $\fh$, $\fg_{\alpha_s}$ and all $\fg_{\pm
\alpha}$ with $\alpha_s\ne \alpha \in\Pi$. Then $\fp=\fl+\fu$ with
$\fl$ being the Levi subalgebra and $\fu$ the nilradical.  We denote
by $\Delta(\fu)$ the roots of $\fu$. Let $\bar\fu$ be the nilpotent
subalgebra of $\fg$ spanned by the root spaces $\fg_{-\beta}$ with
$\beta\in\Delta(\fu)$. Then $\fg= \bar\fu+\fl+\fu$. Note that
$\fl\subset\fg_{\bar 0}$ is a reductive Lie algebra and is purely
even. We denote by $\Delta^+(\fl)\subset\Delta^+_{\bar 0}$ the set
of the positive roots of $\fl$.  Then every $\alpha\in
\Delta^+(\fl)$ satisfies $(\alpha, \alpha)\ne 0$. Let
\begin{eqnarray}\label{P0+}
P_0^+=\left\{\mu\in\fh^* \left|\frac{2(\mu, \alpha)}{(\alpha,
\alpha)}\in\Z_+\right., \ \forall \alpha\in\Delta^+(\fl)\right\}
\end{eqnarray}
be the set of integral $\fl$-dominant weights.

Let $T:=\C[[t]]$ be the ring of formal power series in the
indeterminate $t$, and consider $\C$-algebra homomorphisms $\phi:
\U(\fh)\longrightarrow T$ of the following kind. For $\alpha\in
\Delta^+(\fl)$, denote by $\check\alpha$ the coroot (i.e.,
$\check\alpha\in\fh$ satisfying $\mu(\check\alpha)=\frac{2(\mu,
\alpha)}{(\alpha, \alpha)}$ for all $\mu\in\fh^*$). We require
$\phi$ to satisfy the following conditions
\begin{eqnarray}\label{phi}
\begin{aligned}
&\phi(\check\alpha) = 0 \ \text{ for all  $\alpha\in\Delta^+(\fl)$},\\
&{\rm Im}\phi\subset t\C[[t]] \ \text{ \ and \ }\ \frac{{\rm Im}\phi}{t^2\C[[t]]}\cong\C.
\end{aligned}
\end{eqnarray}
Such morphisms exist in abundance. For example, we may take
\begin{eqnarray}\label{phi-1}
\begin{aligned}
&\phi(h)=t \delta(h) \ \text{ for all $h\in\fh$,
with  fixed $\delta\in\fh^*$ satisfying} \\
&(\delta, \alpha)=0 \ \text{ for all } \ \alpha\in\Delta^+(\fl), \
\text{ and } \  (\delta, \alpha_s)\ne 0.
\end{aligned}
\end{eqnarray}

Consider the category $\fg$-Mod-$T$ of $\Z_2$-graded $\U(\fg)$-$T$
bimodules such that the left action of $\C\subset\U(\fg)$ and right
action of $\C\subset T$ agree. The $\Z_2$-grading of the objects is
compatible with the $\Z_2$-grading of $\U(\fg)$. All the morphisms
in the category preserve this grading, that is, they are homogeneous
of degree $0$. Obviously, $\fg$-Mod-$T$ is an abelian category. For
simplicity we shall refer to an object in $\fg$-Mod-$T$ as a
$\fg$-$T$-module.

Let us now fix once for all a morphism $\phi$ satisfying
\eqref{phi}. Given every object $M$ in the category $\fg$-Mod-$T$,
we define the deformed weight space of weight $\mu\in\fh^*$ by
\[
M_\mu = \{m\in M \mid h m = \mu(h) m + m\phi(h), \ \forall
h\in\fh\}.
\]
Similar to \cite{S}, we let $\cO^\fp(T)$ be the full
subcategory of $\fg$-Mod-$T$ such that each object $M$ in
$\cO^\fp(T)$
\begin{itemize}
\item is finitely generated over
$\U(\fg)\otimes_\C T$;

\item decomposes into the direct sum of deformed weight spaces
$M=\oplus_\mu M_\mu$; and

\item is locally $\U(\fp)$-finite.
\end{itemize}
Here local $\fp$-finiteness means that for any $v\in M$,  $\U(\fp)v$
is a $\U(\fp)$-submodule of finite complex dimension.

One can easily show that $\cO^\fp(T)$ is closed under taking
submodules and finite direct sums. It is well known that the power
series ring is Noetherian, and it is also easy to show (say, by
using \cite[Proposition I.8.17]{BG}) that $\U(\fg)$ is Noetherian.
Thus $\cO^\fp(T)$ is also closed under taking quotients. It then
immediately follows that $\cO^\fp(T)$, being a full subcategory of
the abelian category $\fg$-Mod-$T$, is an abelian category.

The generalised Verma modules are distinguished objects of
$\cO^\fp(T)$, which we now discuss. We need the following easy
result.
\begin{lemma} Corresponding to each $\phi$ with property \eqref{phi},
there exists a $\fp$-action on $T$ defined, for all $f\in T$, by
\begin{eqnarray}
\begin{aligned}\label{p-action-T}
&h f =  f\phi(h) \quad \text{for all $h\in \fh$}, \\
&X  f = 0 \quad \text{for all $X\in \fg_\alpha\subset\fp$}.
\end{aligned}
\end{eqnarray}
\end{lemma}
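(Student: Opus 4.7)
The plan is to verify that the proposed formulas extend to a well-defined Lie superalgebra action of $\fp$ on $T$. Since $T$ is commutative and right multiplication by $\phi(h)$ coincides with left multiplication, the formula $h\cdot f = f\phi(h)$ is unambiguous, and the only real content is checking that the super bracket is respected on pairs of elements of $\fp$. I would use the decomposition $\fp = \fh \oplus \bigoplus_{\alpha \in \Delta(\fl)\cup\Delta(\fu)} \fg_\alpha$ and check the super Jacobi relation $[x,y]\cdot f = x\cdot(y\cdot f) - (-1)^{|x||y|}y\cdot(x\cdot f)$ case by case on homogeneous generators.

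First I would dispose of the easy cases. For $x,y \in \fh$ both sides vanish because $\fh$ is abelian and multiplications by $\phi(h_1),\phi(h_2)$ commute in $T$. For $x = h \in \fh$ and $y = X \in \fg_\alpha \subset \fp$, the left side is $\alpha(h)X\cdot f = 0$, and the right side is $h\cdot 0 - X\cdot(f\phi(h)) = 0$ since root vectors act as zero. Next, for $x = X \in \fg_\alpha$ and $y = Y \in \fg_\beta$ with $\alpha,\beta \in \Delta(\fp)$, the right-hand side is manifestly zero, so the only thing to check is that $[X,Y]\cdot f = 0$.

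The subcases are: (i) $\alpha+\beta$ is not a root and $\alpha+\beta \ne 0$, so $[X,Y]=0$; (ii) $\alpha+\beta \in \Delta(\fp)$, so $[X,Y] \in \fg_{\alpha+\beta}$ acts as zero by definition; (iii) $\alpha+\beta = 0$, so both $\alpha$ and $-\alpha$ lie in $\Delta(\fp)$. This last case is the only place where the hypotheses on $\phi$ enter, and it is the main (though still minor) obstacle. Since $\Delta(\fu)$ consists only of positive roots with respect to the parabolic decomposition, having both $\pm\alpha$ in $\Delta(\fp)$ forces $\alpha \in \Delta(\fl) \subset \Delta_{\bar 0}$. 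Because $\fl$ is reductive with purely even Cartan $\fh$ and $\alpha$ is a real even root, $[\fg_\alpha,\fg_{-\alpha}]$ is spanned by the coroot $\check\alpha$. Hence $[X,Y] = c\,\check\alpha$ for some $c \in \C$, and condition \eqref{phi} gives $\phi([X,Y]) = c\,\phi(\check\alpha) = 0$. Therefore $[X,Y]\cdot f = f\phi([X,Y]) = 0$, as required.

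Having verified the super Jacobi relation on all homogeneous pairs, the formulas extend uniquely to a Lie superalgebra representation of $\fp$ on $T$, which is the desired $\fp$-action. The argument is entirely formal, with no dependence on any deeper property of $\phi$ beyond the vanishing on coroots of $\fl$ stipulated in \eqref{phi}; the second condition in \eqref{phi} (that $\mathrm{Im}\,\phi \subset t\C[[t]]$ with $\mathrm{Im}\,\phi / t^2\C[[t]] \cong \C$) plays no role here and will only be needed later when passing from the deformed setting to the specialisation at $t=0$.
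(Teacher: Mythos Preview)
Your proposal is correct and follows exactly the approach the paper has in mind; the paper simply records the one-line proof ``This immediately follows from property \eqref{phi} of the map $\phi$,'' whereas you have unpacked the case-by-case verification that this sentence summarises. The key point---that for $\alpha\in\Delta(\fl)$ one has $[\fg_\alpha,\fg_{-\alpha}]=\C\check\alpha$ and hence $\phi([\fg_\alpha,\fg_{-\alpha}])=0$ by \eqref{phi}---is precisely what the paper is invoking.
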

\begin{proof}
This immediately follows from property \eqref{phi} of the map
$\phi$.
\end{proof}

For any $\lambda\in P^+_0$, let $L^0(\lambda)$ be the irreducible
$\fp$-module with highest weight $\lambda$. Then $L^0(\lambda)$ is
finite dimensional. Introduce the $\fp$-module $L^0_T(\lambda)
=L^0(\lambda)\otimes_\C T$ with $\fp$ acting diagonally. This is
also a $\fp$-$T$-bimodule with $T$ acting on the right by
multiplication on the factor $T$. Now we define the generalised
Verma module (a $\fg$-$T$-bimodule) with highest weight $\lambda$ by
\begin{eqnarray}\label{Kac-module}
K_T(\lambda):=\U(\fg)\otimes_{\U(\fp)} L^0_T(\lambda)
=\U(\fg)\otimes_{\U(\fp)} \big(L^0(\lambda)\otimes_\C T\big),
\end{eqnarray}
where $T$ acts on the last factor by multiplication. Note that
$K_T(\lambda)$ is a free $T$-module. If $\fg$ is a type I Lie
superalgebra, we call $K_T(\lambda)$
the deformed Kac module with highest weight $\lambda$. In this case,
$K_T(\lambda)$ has finite rank over $T$.

\subsection{Generalised Jantzen filtration}\label{filtration-subsec}

Keep notation from the last subsection. Denote by $\theta$ the
$\C$-linear anti-involution of $\fg$ which maps $\fg_\alpha$ to
$\fg_{-\alpha}$ for any root space and restricts to the identity map
on $\fh$. It extends uniquely to an anti-involution on the universal
enveloping algebra $\U(\fg)$. Construct a $T$-bilinear form
\[
\langle\ , \ \rangle_0: L^0_T(\lambda)\times L^0_T(\lambda)\longrightarrow T
\]
satisfying the following conditions:
\begin{eqnarray}\label{form0}
\begin{aligned}
&\langle x m, m' \rangle_0= \langle m, \theta(x) m' \rangle_0 \ \
\text{for all $m, m' \in L^0_T(\lambda)$, $x\in\U(\fl)$,}\\
&\langle v\otimes 1, v\otimes 1\rangle_0=1 \ \ \text{for a fixed
highest weight vector $v\ne 0$ of $L^0_T(\lambda)$.}
\end{aligned}
\end{eqnarray}
Such a form exists, is unique and is nondegenerate in the sense that
$\langle m, L^0_T(\lambda) \rangle_0=\{0\}$ if and only if $m=0$.
This follows from the existence of a $\C$-bilinear form on
$L^0(\lambda)$ with similar properties. We now define a $T$-bilinear
form
\begin{eqnarray}\label{form}
\langle\ , \ \rangle: K_T(\lambda)\times K_T(\lambda)\longrightarrow T
\end{eqnarray}
by requiring
\begin{itemize}
\item $\langle 1\otimes v', 1\otimes v''\rangle=\langle v', v''\rangle_0$
for all $v', v''\in  L^0_T(\lambda)$;
\item $\langle x m, m' \rangle= \langle m, \theta(x) m' \rangle$
for all $m, m' \in K_T(\lambda)$ and $x\in\U(\fg)$.
\end{itemize}

We have the following result.
\begin{lemma}\label{non-degeneracy}
Assume that the morphism $\phi: \U(\fh)\longrightarrow T$
is given by \eqref{phi-1}. Then for any $\lambda\in P^+_0$, the
$\U(\fg)$-contravariant $T$-bilinear form \eqref{form} on $K_T(\lambda)$
is nondegenerate.
\end{lemma}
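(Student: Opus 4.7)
The plan is to extend scalars from $T$ to its fraction field $K:=\mathrm{Frac}(T)=\C((t))$ and exploit the irreducibility of the resulting Kac module $K_K(\lambda):=K_T(\lambda)\otimes_T K$. First I will verify that the form respects the deformed weight grading: for $h\in\fh$, $m\in K_T(\lambda)_\mu$, and $m'\in K_T(\lambda)_\nu$, combining $\theta(h)=h$ with $h\cdot m=\mu(h)m+m\phi(h)$ yields $(\mu(h)-\nu(h))\langle m,m'\rangle=0$ for every $h\in\fh$, forcing $\langle m,m'\rangle=0$ unless $\mu=\nu$. The PBW isomorphism $K_T(\lambda)\cong\U(\bar\fu)\otimes_\C L^0_T(\lambda)$ makes each deformed weight space $K_T(\lambda)_\mu$ a free $T$-module of finite rank, so nondegeneracy of $\langle\cdot,\cdot\rangle$ reduces to the claim that the Gram determinant $\det G_\mu\in T$ is nonzero for every $\mu$; since $T\hookrightarrow K$, this in turn reduces to nondegeneracy of the induced $K$-valued form on $K_K(\lambda)$.

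The core step is to prove that $K_K(\lambda)$ is irreducible for the type I algebras of interest. On the canonical cyclic vector $1\otimes v$, $h\in\fh$ acts by multiplication by $\lambda(h)+\phi(h)$, so viewed over $K$ the highest weight is $\lambda+t\delta\in\fh^*\otimes_\C K$. In the distinguished Borel every odd positive root has the form $\gamma=\alpha_s+\sum_{\alpha\in\Pi\setminus\{\alpha_s\}}c_\alpha\alpha$ with $c_\alpha\in\Z_{\ge 0}$, so the conditions $(\delta,\alpha)=0$ for $\alpha\in\Delta^+(\fl)$ and $(\delta,\alpha_s)\neq 0$ imposed by \eqref{phi-1} give $(\delta,\gamma)=(\delta,\alpha_s)\neq 0$ for every $\gamma\in\Delta^+_{\bar 1}$. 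Hence for every $\gamma\in\Delta^+_1$,
\[
(\lambda+t\delta+\rho,\gamma)=(\lambda+\rho,\gamma)+t(\delta,\alpha_s)
\]
is a nonzero element of $K=\C((t))$, so $\lambda+t\delta$ is typical over $K$ and the Kac typicality theorem delivers the desired irreducibility.

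Finally, $\langle 1\otimes v,1\otimes v\rangle=1\neq 0$ shows that the contravariant form on $K_K(\lambda)$ is not identically zero; its radical is a $\U(\fg)$-submodule and therefore vanishes by irreducibility. Consequently $\det G_\mu\neq 0$ in $K$---and hence in $T$---for every weight $\mu$, establishing nondegeneracy of \eqref{form}. The principal obstacle is the typicality step: the argument above is transparent for type I because each odd positive root contains $\alpha_s$ with coefficient one in the distinguished Borel, but for type II some odd roots of $\fu$ involve $\alpha_s$ with a different coefficient and additional atypicality-type conditions enter, so one then needs either a more refined deformation or a different irreducibility criterion; the present use of \eqref{phi-1} handles the type I case cleanly, which is all that is needed in the remainder of this paper.
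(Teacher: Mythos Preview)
Your argument is correct for type~I superalgebras and takes a genuinely different route from the paper. The paper argues by contradiction through central characters: a primitive vector $v_\mu$ in the radical would force $\chi_{\lambda,T}=\chi_{\mu,T}$, and specialising $t\mapsto c\in\C$ (so that $\lambda_c=\lambda+c\delta$ is typical) yields a Weyl-group relation $\mu_c+\rho=w(\lambda_c+\rho)$ which, for $w\ne1$, contradicts either $\fl$-dominance of $\mu_c$ (type~I) or the $c$-independence of $\lambda_c-\mu_c$ (type~II). Your approach---base change to $K=\C((t))$ and irreducibility of $K_K(\lambda+t\delta)$ via Kac's typicality criterion---is closer in spirit to Remark~\ref{chi0-rem}, which computes the form on the bottom weight space directly; it gives a clean conceptual picture (irreducibility at the generic point), and Kac's argument does go through over any field of characteristic zero, so the invocation is legitimate.

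The gap is scope. The lemma is stated in Section~\ref{filtration} for \emph{all} classical $\fg$, and this generality matters: the filtration \eqref{filt-T} and Lemma~\ref{reducedform} are set up uniformly, with the type~II analysis deferred to the sequel. For type~II the module $K_K(\lambda+t\delta)$ is an infinite-dimensional parabolic Verma module whose irreducibility is not controlled by odd typicality alone; even roots of $\fu$ (for instance $2\delta_n$ in $\mathfrak{osp}_{2m+1|2n}$, which carries $\alpha_s$ with coefficient~$2$) contribute to the reducibility locus, so your one-parameter deformation does not obviously avoid it. Incidentally, in the distinguished Borel every \emph{odd} positive root still carries $\alpha_s$ with coefficient~$1$ even in type~II; it is the even roots of $\fu$ that carry higher coefficients, so the obstruction you name in your final paragraph is slightly misdiagnosed. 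The paper's central-character argument sidesteps all of this by never invoking a full irreducibility criterion: it only uses the linkage constraint on a single putative primitive weight together with the linear $c$-dependence of $w(\lambda_c+\rho)$.
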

\begin{proof}
Let $z$ be an element of the center $Z(\fg)$ of $\U(\fg)$. Then $z$
acts on $K_T(\lambda)$ by a scalar $\chi_{\lambda, T}(z)\in T$. In
fact $\chi_{\lambda, T}: Z(\fg)\longrightarrow T$,
$z\mapsto  \chi_{\lambda, T}(z)$, defines a
$\C$-algebra homomorphism.

Call a nonzero $\U(\fl)$ highest weight vector $v_\mu\in
K_T(\lambda)_\mu$ a primitive vector if $v_\mu$ does not belong to
the $\U(\fg)$-$T$-submodule $V'$ generated by $\fu v_\mu$. It is
important to observe that if the kernel of the form \eqref{form} is
nontrivial, it must contain at least one primitive vector $v_\mu$
with $\mu\ne \lambda$. Now $v_\mu+V'$ is a $\U(\fg)$-highest weight
vector in the quotient $\U(\fg)$-$T$-module $K_T(\lambda)/V'$. Thus
each $z\in Z(\fg)$ acts on $K_T(\lambda)/V'$ by a scalar $\chi_{\mu,
T}(z)\in T$, and we have $\chi_{\mu, T}(z) = \chi_{\lambda, T}(z)$
for all $z\in Z(\fg)$.

Since $\phi: \U(\fh)\longrightarrow T$ is defined by \eqref{phi-1},
$\chi_{\lambda, T}(Z(\fg))$ and $\chi_{\mu, T}(Z(\fg))$ are
contained in the sub-ring of $T$ consisting of polynomials. We may
specialise $t$ to an arbitrary complex number $c$ to obtain
$\C$-algebra homomorphisms
\[
\begin{aligned}
\chi_{\lambda_c}: Z(\fg)\longrightarrow \C,
\quad z\mapsto \chi_{\lambda_c}(z)=\chi_{\lambda, T}(z)|_{t=c}, \\
\chi_{\mu_c}: Z(\fg)\longrightarrow \C, \quad z\mapsto
\chi_{\mu_c}(z)=\chi_{\mu, T}(z)|_{t=c},
\end{aligned}
\]
where $\lambda_c=\lambda+c\delta$ and $\mu_c=\mu+c\delta$, and obviously
$\chi_{\lambda_c}=\chi_{\mu_c}$.
Now the weights $\lambda_c, \mu_c$ have the following properties:
\begin{itemize}
\item $\lambda_c, \mu_c\in P_0^+$;
\item $\lambda_c-\mu_c=\lambda-\mu=\sum_{\alpha\in B}\a$ for some
nonempty subset $B$ of $\Delta(\fu)$;
\item there exists $w$ in the Weyl
group of $\fg$ such that $\mu_c+\rho=w(\lambda_c+\rho)$.
\end{itemize}
The last condition is required by $\chi_{\lambda_c}=\chi_{\mu_c}$
and the fact that $\lambda_c$ is a typical weight for appropriate
values of $c$. Because of the second condition, $w$ can not be the identity
element $1$.

For type I Lie superalgebras, one can easily see that
$w(\lambda_c+\rho)-\rho$ can not belong to $P_0^+$ for any $w\ne 1$.
For type II Lie superalgebras, there can exist Weyl group elements
$w\ne 1$ rendering $w(\lambda_c+\rho)-\rho$ dominant with respect to
$\fl$. However, in this case, $\lambda_c+\rho-w(\lambda_c+\rho)$
will depend on $c$ linearly, thus can not be equal to
$\sum_{\alpha\in B}\a$ for any $B$. Therefore, we conclude that there
can not exist any $\mu_c$ satisfying all the conditions. This implies
that the kernel of the form \eqref{form} is trivial.
\end{proof}

\begin{remark}\label{chi0-rem}
Lemma \ref{non-degeneracy} can be proven by a direct computation if
$\fg$ is type I. Let $X_{-\alpha}\ne 0$ be a root vector in
$\bar\fu$ with root $-\alpha$, where $\alpha\in\Delta(\fu)$. Given
any order on $\Delta(\fu)$, we set
\begin{eqnarray}\label{D}
D=\prod_{\alpha\in\Delta(\fu)}
X_{-\alpha}, \quad \text{factors ordered by order on $\Delta(\fu)$}.
\end{eqnarray}
Then $\fg_\beta$,  for all $\beta\in \Delta(\fl)$, commutes with
$D$. Any $m\ne 0$ in $K_T(\lambda)$ can be mapped, by applying
$X_{-\alpha}$ ($\alpha\in\Delta(\fu)$), to some nonzero vector $m'$
in the $\fl$-$T$-submodule generated by $D(v\otimes 1)$, where $v$
is the highest weight vector of $L^0(\lambda)$ chosen in
\eqref{form0}. Using $\fl$, we can always map $m'$ to $D(v\otimes
f)$ for some nonzero $f\in T$. Now a computation gives
\begin{eqnarray}\label{chi0}
\begin{aligned}
&\langle D(v\otimes f), D(v\otimes g)  \rangle = f g
\chi_0(\lambda), \quad f, g\in T, \\
&\chi_0(\lambda)=\prod_{\alpha\in\Delta(\fu)} \Big( (\lambda+\rho,
\alpha)+ t(\delta, \alpha)\Big).
\end{aligned}
\end{eqnarray}
Clearly $\chi_0(\lambda)$ is nonzero and so is also
$f g \chi_0(\lambda)$.
\end{remark}

\begin{remark}
Hereafter we shall take $\phi$ to be given by \eqref{phi-1}.
\end{remark}

For each $i\in\Z_+$, we define
\[
K_T^i(\lambda) = \left\{m\in K_T(\lambda) \mid \langle m,
K_T(\lambda)\rangle\subset t^i\C[[t]] \right\}.
\]
Clearly the $K_T^i(\lambda)$ are $\fg$-T submodules of
$K_T(\lambda)$, which give rise to the following descending
filtration for $K_T(\lambda)$:
\begin{eqnarray}\label{filt-T}
K_T(\lambda)=K_T^0(\lambda)\supset K_T^1(\lambda)\supset
K_T^2(\lambda)\supset ....
\end{eqnarray}

Let us consider the specialisation of $\cO^\fp(T)$ to the parabolic
category $\cO^\fp$ of $\fg$ over the complex number field. Regard $\C$
as a $T$-module with $f(t)\in\C[[t]]$ acting by multiplication by
$f(0)$. Let $\cR: \cO^\fp(T)\longrightarrow \cO^\fp$ be the specialisation
functor which sends an object $M$ in $\cO^\fp(T)$ to $M\otimes_T\C$ in
$\cO^\fp$, and a morphism $\psi: M\to N$ to
\[
\cR(\psi): M\otimes_T\C\to N\otimes_T\C, \quad \cR(\psi)(m\otimes_T
c) = \psi(m)\otimes_T c.
\]

Now consider the filtration \eqref{filt-T} of $K_T(\lambda)$ under
the functor $\cR$. Denote $K(\lambda)= K_T(\lambda)\otimes_T\C$ and
$K^i(\lambda) = K^i_T(\lambda)\otimes_T\C$. Applying the
specialisation functor $\cR$ to \eqref{filt-T} we obtain the
following descending filtration,
\begin{eqnarray}\label{filt}
K(\lambda)=K^0(\lambda)\supset K^1(\lambda)\supset
K^2(\lambda)\supset ...,
\end{eqnarray}
which is a generalisation of the Jantzen filtration for Verma
modules of Lie algebras to the case of generalised Verma modules of
Lie superalgebras. For simplicity, we shall refer to it as the {\em
Jantzen filtration} for $K(\lambda)$. We also define the consecutive
quotients of the Jantzen filtration:
\begin{eqnarray}
K_i(\lambda)=K^i(\lambda)/K^{i+1}(\lambda), \quad i=0, 1, 2, \dots.
\end{eqnarray}

We have the following result.
\begin{lemma} \label{reducedform} For any $\lambda\in P_0^+$,
\begin{enumerate}
\item the submodule $K^1(\lambda)$ is the unique maximal proper submodule of
$K(\lambda)$;
\item each $K_i(\lambda)$ admits a non-degenerate contravariant bilinear form.
\end{enumerate}
\end{lemma}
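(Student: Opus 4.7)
The plan is to use the non-degenerate $T$-bilinear form on $K_T(\lambda)$ supplied by Lemma~\ref{non-degeneracy} together with the standard weight-space decomposition. The key preliminary observation is that the form vanishes between weight spaces of distinct weights: for $m\in K_T(\lambda)_\mu$ and $m'\in K_T(\lambda)_\nu$, contravariance applied to $h\in\fh$ together with the deformed weight action yields $(\mu(h)-\nu(h))\langle m,m'\rangle=0$. Also, each $K_T^i(\lambda)$ is a $\fg$-$T$-submodule by contravariance.

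\textbf{Part (1).} Reduction of the form at $t=0$ produces a contravariant $\C$-bilinear form $\overline{\langle\cdot,\cdot\rangle}$ on $K(\lambda)$, and directly from the definition of $K_T^1(\lambda)$ its radical is exactly $K^1(\lambda)$. The normalisation $\langle v\otimes 1,v\otimes 1\rangle_0=1$ puts the highest weight vector $1\otimes v$ outside $K^1(\lambda)$, so $K^1(\lambda)$ is proper. For uniqueness, take any proper submodule $N\subseteq K(\lambda)$. Since $K(\lambda)_\lambda$ is one-dimensional and $1\otimes v$ is a cyclic generator, $N\cap K(\lambda)_\lambda=0$. Combined with orthogonality of distinct weight spaces, this forces $\overline{\langle 1\otimes v,N\rangle}=0$, so $1\otimes v\in N^\perp$. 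By contravariance $N^\perp$ is a submodule; containing a cyclic generator, it must equal $K(\lambda)$. Hence $N$ is annihilated by the whole form, i.e.\ $N\subseteq K^1(\lambda)$.

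\textbf{Part (2).} For each $i\ge 0$ I would define a form on $K_i(\lambda)$ by
$$\langle[m],[m']\rangle_i:=\bigl(t^{-i}\langle m,m'\rangle\bigr)\bigm|_{t=0},\qquad m,m'\in K_T^i(\lambda).$$
The right-hand side lies in $\C$ because $\langle m,m'\rangle\in t^i\C[[t]]$ by definition of $K_T^i(\lambda)$. Well-definedness on the quotient $K_i(\lambda)=K^i(\lambda)/K^{i+1}(\lambda)$ is straightforward: altering a lift $m$ by an element of $K_T^{i+1}(\lambda)$ changes $\langle m,m'\rangle$ by an element of $t^{i+1}\C[[t]]$, and altering by an element of the kernel $K_T^i(\lambda)\cap tK_T(\lambda)=tK_T^{i-1}(\lambda)$ of the projection $K_T^i(\lambda)\to K^i(\lambda)$ produces (using symmetry of $\langle\cdot,\cdot\rangle$, which is inherited from $\langle\cdot,\cdot\rangle_0$ via uniqueness of the contravariant extension) a change in $t^{i+1}\C[[t]]$. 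Contravariance of $\langle\cdot,\cdot\rangle_i$ is inherited directly from the original form.

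The main obstacle is non-degeneracy, which I would establish weight space by weight space. By the PBW decomposition $K_T(\lambda)\cong\U(\bar\fu)\otimes_\C L^0(\lambda)\otimes_\C T$, every weight space $V_\mu:=K_T(\lambda)_\mu$ is a finite-rank free $T$-module. Lemma~\ref{non-degeneracy} combined with orthogonality of distinct weight spaces makes the restriction of $\langle\cdot,\cdot\rangle$ to $V_\mu$ non-degenerate. The structure theorem for symmetric bilinear forms over the discrete valuation ring $T=\C[[t]]$ (with $2$ invertible) then yields a $T$-basis $\{f_1,\ldots,f_{n_\mu}\}$ of $V_\mu$ with $\langle f_j,f_k\rangle=\delta_{jk}u_jt^{i_j}$ and units $u_j\in T^\times$. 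In this basis a direct computation gives $K^i(\lambda)\cap K(\lambda)_\mu=\mathrm{span}_\C\{\bar f_j:i_j\ge i\}$ and $K_i(\lambda)\cap K(\lambda)_\mu=\mathrm{span}_\C\{\bar f_j:i_j=i\}$, and the induced form is $\langle\bar f_j,\bar f_k\rangle_i=\delta_{jk}u_j(0)$ with each $u_j(0)\ne 0$. Summing over weights yields non-degeneracy of $\langle\cdot,\cdot\rangle_i$ on $K_i(\lambda)$.
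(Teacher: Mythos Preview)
Your proposal is correct and follows essentially the same approach as the paper. The paper disposes of part~(1) as ``clear'' and for part~(2) defines the induced form by $(w,w')_i=\lim_{t\to 0}t^{-i}\langle w_T,w'_T\rangle$, then cites \cite[\S 5.1]{J} and \cite[\S 5.6]{H} for non-degeneracy; you simply unwind those references by diagonalising the form weight-space-by-weight-space over the DVR $\C[[t]]$, which is precisely the argument in Jantzen and Humphreys.
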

\begin{proof}
Part (1) is clear. For part (2), we extract a contravariant bilinear
form $( \ , \ )_i$ on $K_i(\lambda)$ from the form $\langle \  , \
\rangle$ on $K_T(\lambda)$ (defined by \eqref{form}) in the
following way. For any $w, w'\in K_i(\lambda)$, we let $w_T$ and
$w'_T$ be elements in $K^i_T(\lambda)$ such that $w_T\otimes_T 1,
w'_T\otimes_T 1\in K^i(\lambda)$ are representatives of $w$ and $w'$
respectively. Then set
\[
(w , w')_i = \lim_{t\to 0} t^{-i}\langle w_T, w'_T \rangle,
\]
which defines a contravariant bilinear on $K_i(\lambda)$
since $\langle \  , \  \rangle$ is contravariant.
It follows from general facts on nondegenerate bilinear forms \cite[\S 5.1]{J}
(see also \cite[\S 5.6]{H}) that $( \ , \ )_i$
is non-degenerate.
\end{proof}

\subsection{Main problems to be addressed}\label{problems-subsec}
Now we describe in more precise terms the main problems to be addressed
in this paper and its sequel.

Recall that a descending filtration of a module $M$
\[
M=M^0\supset M^1 \supset M^2 \supset \dots \supset M^l \supset M^{l+1}=\{0\}
\]
is called a {\it Loewy filtration} if consecutive quotients
$M_i=M^{i+1}/M^i$ are all semi-simple, and its length $l$ is
minimal. The socle filtration and radical filtration are
distinguished examples of Loewy filtrations. A module is called {\em
rigid} if it has a unique Loewy filtration. This happens if and only
if the socle filtration and radical filtration coincide.

One of the main problems to be addressed is whether the following
statement is true for Lie superalgebras.
\begin{statement}\label{main-1}
For any $\lambda\in P_0^+$, the Jantzen filtration is the unique Loewy
filtration of the generalised Verma module $K(\lambda)$.
\end{statement}

If the statement holds, then it implies in particular that
$K(\lambda)$ is rigid. Note that Kac modules of
$\mathfrak{gl}_{m|n}$ are known to be rigid \cite[IV]{BS}.

Let $L(\lambda)$ be an irreducible $\fg$-module with highest weight
$\lambda\in P_0^+$, which restricts to a module over the nilradical
of the parabolic subalgebra $\fp$. Let $H^i(\fu, L(\lambda))$ be the
$i$-th Lie superalgebra cohomology group of $\fu$ with coefficients
in $L(\lambda)$, which admits a semi-simple $\fl$-action. For
$\mu\in P_0^+$, we let $L^0(\mu)$ be the irreducible $\fl$-module
with highest weight $\mu$. The following generalised Kazhdan-Lusztig
polynomials in the indeterminate $q$ were introduced in \cite{Se96}:
\begin{eqnarray}\label{poly}
p_{\lambda \mu}(q) = \sum_{i=0}^\infty (-q)^i [H^i(\fu, L(\lambda)):
L^0(\mu)],
\end{eqnarray}
where $[H^i(\fu, L(\lambda)): L^0(\mu)]$ is the multiplicity of
$L^0(\mu)$ in $H^i(\fu, L(\lambda))$. It is a standard fact that
$[H^i(\fu, L(\lambda)): L^0(\mu)]=\dim \Ext^i(K(\mu), L(\lambda))$,
where $\Ext^i$ are defined in the category $\cO^\fp$.

Choose a linear order on $P_0^+$ compatible with the usual partial
order defined by the positive roots. Then the matrix
$P(q)=\Big(p_{\lambda \mu}(q)\Big)_{\lambda, \mu\in P^+_0}$ is upper
triangular with diagonal entries being $1$. Let
$A(q)=\Big(a_{\lambda \mu}(q)\Big)_{\lambda, \mu\in P^+_0}$ be the
inverse matrix of $P(q)$, and refer to $a_{\lambda \mu}(q)$ as the
inverse Kazhdan-Lusztig polynomials of $\fg$. For any $\lambda,
\mu\in P^+_0$, we also define
\begin{eqnarray}\label{J-poly}
J_{\lambda \mu}(q) = \sum_{i=0}^\infty q^i [K_i(\lambda): L(\mu)],
\quad i=0, 1, \dots,
\end{eqnarray}
where $[K_i(\lambda), L(\mu)]$ denotes the multiplicity of the
irreducible $\fg$-module $L(\mu)$ in $K_i(\lambda)$. For easy
reference, we call $J_{\lambda \mu}(q)$ Jantzen polynomials.

The other main problem to be addressed is whether the following
statement holds for Lie superalgebras.
\begin{statement}\label{main-2}
For any $\lambda, \mu\in P_0^+$,
the Jantzen polynomials $J_{\lambda \mu}(q)$ coincide with the
inverse Kazhdan-Lusztig polynomials $a_{\lambda \mu}(q)$.
\end{statement}

We shall prove that both statements are true for the type I Lie
superalgebras in the present paper.

\section{Jantzen filtration of type I Lie superalgebras}\label{type-I}

In this section we study the Jantzen filtration for Kac modules of
type I Lie superalgebras.  Keep notation of the last section, and
let $\fg$ denote a type I Lie superalgebra in the remainder of the
paper.

Let us first establish the following result.
\begin{lemma}\label{bottom-T}
Assume that $\fg$ is a Lie superalgebra of type I, and the weight
$\lambda\in P_0^+$ has degree of atypicality $\sharp(\lambda)=r$.
Let $K_T(\lambda)_{\lambda- 2\rho_1}$ be the deformed weight space
of weight $\lambda- 2\rho_1$ in the deformed Kac module
$K_T(\lambda)$. Then
\[
K_T(\lambda)_{\lambda- 2\rho_1}\subset K^r_T(\lambda), \quad
K_T(\lambda)_{\lambda- 2\rho_1}\not\subset
K^{r+1}_T(\lambda).
\]
\end{lemma}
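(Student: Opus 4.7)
The strategy is to make the form $\langle\ ,\ \rangle$ completely explicit on the deformed weight space $K_T(\lambda)_{\lambda-2\rho_1}$ and then read off its $t$-adic order from formula \eqref{chi0}. Since $\fg$ is of type I, $\bar\fu$ is abelian and purely odd; in $\U(\bar\fu)=\Lambda(\bar\fu)$ the element $D$ of \eqref{D} is, up to scalar, the unique monomial of weight $-2\rho_1$. Combined with $\dim L^0(\lambda)_\lambda=1$, the PBW decomposition shows that $K_T(\lambda)_{\lambda-2\rho_1}$ is the free rank-one $T$-module generated by $D(v\otimes 1)$. A direct calculation using contravariance shows that distinct deformed weight spaces pair to zero (applying $h\in\fh$ to both arguments yields $(\mu(h)-\nu(h))\langle m,m'\rangle=0$, and $T$ is a domain), so it suffices to pair $K_T(\lambda)_{\lambda-2\rho_1}$ against itself; by \eqref{chi0} the result is $\langle D(v\otimes f),D(v\otimes g)\rangle = fg\,\chi_0(\lambda)$.

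Next I would read off the $t$-adic order of
\[
\chi_0(\lambda)=\prod_{\alpha\in\Delta_1^+}\bigl((\lambda+\rho,\alpha)+t(\delta,\alpha)\bigr)
\]
(note $\Delta(\fu)=\Delta_1^+$ for type I). A factor is divisible by $t$ precisely when $\alpha$ is atypical for $\lambda$, while the constraints $(\delta,\Delta^+(\fl))=0$ and $(\delta,\alpha_s)\neq 0$ on the $\delta$ of \eqref{phi-1} force $(\delta,\alpha)\neq 0$ for every odd root $\alpha$, both in $\mathfrak{gl}_{m|n}$ and in $\mathfrak{osp}_{2|2n}$. Hence $\chi_0(\lambda)=t^k u(t)$ with $u(0)\neq 0$, where $k$ is the number of atypical roots in $\Delta_1^+$.

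The main (though routine) obstacle is to identify $k$ with $r=\sharp(\lambda)$, i.e., to show that the atypical roots of any $\fg_{\bar 0}$-dominant $\lambda$ are automatically pairwise orthogonal. For $\mathfrak{gl}_{m|n}$, dominance forces the $\epsilon$-components and the $\delta$-components of $\lambda+\rho$ to be separately strictly decreasing, so the equation $(\lambda+\rho,\epsilon_i-\delta_j)=0$ uses each index $i$ and each index $j$ at most once, making the atypical roots mutually orthogonal. For $\mathfrak{osp}_{2|2n}$, since every odd positive root contains $\pm\epsilon$ and the $\delta$-components of $\lambda+\rho$ are strictly decreasing and positive, a short direct analysis gives $\sharp(\lambda)\le 1$ with equality precisely when an atypical odd root exists. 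Granted $k=r$, the proof finishes at once: $f\chi_0(\lambda)\in t^rT$ for every $f\in T$ yields $K_T(\lambda)_{\lambda-2\rho_1}\subset K_T^r(\lambda)$, while $\langle D(v\otimes 1),D(v\otimes 1)\rangle=t^r u(t)\notin t^{r+1}T$ shows the weight space is not contained in $K_T^{r+1}(\lambda)$.
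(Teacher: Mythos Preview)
Your proof is correct and follows essentially the same route as the paper: identify the weight space with $D(v\otimes T)$, invoke orthogonality of distinct deformed weight spaces, apply formula \eqref{chi0}, and read off the $t$-adic order of $\chi_0(\lambda)$. The paper's proof simply asserts that $\chi_0(\lambda)\in t^r\C[[t]]\setminus t^{r+1}\C[[t]]$ when $\sharp(\lambda)=r$, whereas you have carefully supplied the two ingredients behind this claim (that $(\delta,\alpha)\ne 0$ for every odd root, and that for $\lambda\in P_0^+$ the atypical roots are automatically pairwise orthogonal so their count equals $\sharp(\lambda)$); these are exactly the details the paper leaves implicit.
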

\begin{proof}
In the case of a type I Lie superalgebra, the deformed weight space
$K_T(\lambda)_{\lambda- 2\rho_1}$ of weight $\lambda- 2\rho_1$ in
$K_T(\lambda)$ is $D(v\otimes T)$ (notation as in Remark
\ref{chi0-rem}). All $K_T(\lambda)_\mu$ with $\mu\ne{\lambda-
2\rho_1}$ are orthogonal to $D(v\otimes T)$ with respect to the form
\eqref{form}. Now for any $m, n\in D(v\otimes T)$, we have $\langle
m, n\rangle\in \chi_0(\lambda) \C[[t]]$ by \eqref{chi0}. If the
degree of atypicality of $\lambda\in P^+_0$ is $r$, then
$\chi_0(\lambda)\in t^r\C[[t]]$ but $\chi_0(\lambda)\not\in
t^{r+1}\C[[t]]$. This proves the lemma.
\end{proof}

Using the lemma, we can easily prove the following result on
the length of the Jantzen filtration.
\begin{theorem}\label{length}
Assume that $\fg$ is of type I, and let $\lambda\in P_0^+$.
Then the Jantzen filtration for the Kac
module $K(\lambda)$ has length $r=\sharp(\lambda)$, that is,
\[
K(\lambda)=K^0(\lambda)\supset K^1(\lambda)\supset
K^2(\lambda)\supset ...\supset K^r(\lambda)\supset \{0\}
\]
with $K^r(\lambda)\ne \{0\}$.
\end{theorem}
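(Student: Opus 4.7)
The plan is to establish both $K^r(\lambda)\neq 0$ and the implicit upper bound $K^{r+1}(\lambda)=0$, which together say that the filtration terminates at level $r$.

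The lower bound $K^r(\lambda)\neq 0$ is an immediate consequence of Lemma~\ref{bottom-T} combined with the specialisation functor $\cR$. The rank-one $T$-module $K_T(\lambda)_{\lambda-2\rho_1}=D(v\otimes T)$ sits inside $K^r_T(\lambda)$, so upon specialising $t\mapsto 0$ its nonzero image $\C D(v)\subset K(\lambda)$ lies in $K^r(\lambda)$.

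For the upper bound I intend to proceed in two parts. The first is a purely structural claim: every nonzero $\fg$-submodule $M\subset K(\lambda)$ contains the bottom weight vector $D(v)$. Because $\bar\fu$ is purely odd and abelian for type~I, $\U(\bar\fu)=\Lambda\bar\fu$ is a finite-dimensional local algebra with maximal ideal $\Lambda^{\geq 1}\bar\fu$, so the space of invariants $M^{\bar\fu}$ is nonzero for every nonzero $M$. A direct computation using $K(\lambda)\cong\Lambda\bar\fu\otimes L^0(\lambda)$ identifies $K(\lambda)^{\bar\fu}$ with $D\cdot L^0(\lambda)$, and $\fl$ acts on this subspace as $\C_{-2\rho_1}\otimes L^0(\lambda)$ (since $\Lambda^{\text{top}}\bar\fu$ is the one-dimensional $\fl$-character $-2\rho_1$), making it an irreducible $\fl$-module. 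Using $[\fl,\bar\fu]\subset\bar\fu$ one sees that $M^{\bar\fu}$ is $\fl$-stable; being a nonzero $\fl$-submodule of an irreducible $\fl$-module, $M^{\bar\fu}$ equals $D\cdot L^0(\lambda)$, and in particular contains $D(v)$.

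The second part shows $D(v)\notin K^{r+1}(\lambda)$. Suppose otherwise; then there exists $y\in K_T(\lambda)$ with $D(v\otimes 1)+ty\in K^{r+1}_T(\lambda)$. Since the contravariant form vanishes between distinct weight spaces, only the weight-$(\lambda-2\rho_1)$ component $y_0=D(v\otimes h)$ (with $h\in T$) contributes to the pairing with $D(v\otimes g)$, and Remark~\ref{chi0-rem} gives
\[
\langle D(v\otimes 1)+ty,\,D(v\otimes g)\rangle\;=\;g\,\chi_0(\lambda)\,(1+th).
\]
Taking $g=1$, this must lie in $t^{r+1}\C[[t]]$; but $1+th$ is a unit in $T$ and $\chi_0(\lambda)$ has $t$-adic valuation exactly $r$, so the product has $t$-valuation exactly $r$, a contradiction. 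Combining the two parts: if $K^{r+1}(\lambda)$ were nonzero it would contain $D(v)$, which is impossible. I expect the main obstacle to lie in the structural step, namely identifying $K(\lambda)^{\bar\fu}$ with $D\cdot L^0(\lambda)$ and recognising it as an irreducible $\fl$-module; the remaining ingredients are an immediate consequence of Lemma~\ref{bottom-T} and a short computation using Remark~\ref{chi0-rem}.
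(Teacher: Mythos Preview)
Your proof is correct and follows essentially the same approach as the paper: both use Lemma~\ref{bottom-T} for the lower bound, and both deduce $K^{r+1}(\lambda)=0$ from the fact that every nonzero submodule of $K(\lambda)$ contains the bottom weight space together with $D(v)\notin K^{r+1}(\lambda)$. You supply explicit arguments for these last two points (via $\bar\fu$-invariants and a direct valuation computation), which the paper asserts as immediate consequences without further justification.
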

\begin{proof}
Let $\bar{L}(\lambda)$ be the submodule of
$K(\lambda)=\cR(K_T(\lambda))$ generated by $D(v\otimes 1)\otimes
1$, where $v$ is the highest weight vector of $L^0(\lambda)$ chosen
in \eqref{form0}. We will call $\bar{L}(\lambda)$ the {\em bottom
composition factor} of $K(\lambda)$. It immediately follows from
Lemma \ref{bottom-T} that $K^r(\lambda)\supset \bar{L}(\lambda)$ and
$K^{r+1}(\lambda)\not\supset \bar{L}(\lambda)$. Since every nonzero
submodule of $K(\lambda)$ must contain $\bar{L}(\lambda)$, we
necessarily have $K^{r+1}(\lambda)=\{0\}$.
\end{proof}

\subsection{The case of $\mathfrak{osp}_{2|2n}$}\label{osp-subsec}
Using Theorem \ref{length}, we can prove the following result for
the Lie superalgebra $\mathfrak{osp}_{2|2n}$.
\begin{theorem}\label{Cn}
Both Statement \ref{main-1} and
Statement \ref{main-2} are true for Jantzen filtrations for
the Kac modules of $\mathfrak{osp}_{2|2n}$.
\end{theorem}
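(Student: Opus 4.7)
The plan is to reduce the theorem to elementary case analysis by exploiting the very restricted atypicality type of $\mathfrak{osp}_{2|2n}$. A direct inspection of the isotropic root system $\Delta^+_1=\{\epsilon\pm\delta_i\mid 1\le i\le n\}$ shows that no two distinct elements are mutually orthogonal, so $\sharp(\lambda)\in\{0,1\}$ for every $\lambda\in P_0^+$. By Theorem~\ref{length}, the Jantzen filtration of every Kac module $K(\lambda)$ therefore has length at most one.

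In the typical case $\sharp(\lambda)=0$ the filtration collapses to $K(\lambda)\supset\{0\}$, so $K(\lambda)=L(\lambda)$ is irreducible by Lemma~\ref{reducedform}(1). Statement~\ref{main-1} is then immediate, and Statement~\ref{main-2} follows because the Kazhdan--Lusztig matrix in the typical block is trivial, so both $J_{\lambda\mu}(q)$ and $a_{\lambda\mu}(q)$ reduce to $\delta_{\lambda\mu}$.

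The substantive case is $\sharp(\lambda)=1$. Here Theorem~\ref{length} gives $K(\lambda)\supset K^1(\lambda)\supset\{0\}$ with $K^1(\lambda)\neq 0$, and Lemma~\ref{reducedform}(1) identifies the top quotient $K(\lambda)/K^1(\lambda)$ with $L(\lambda)$. I would then invoke the structural fact recalled just before the statement, namely that a Kac module for $\mathfrak{osp}_{2|2n}$ has at most two composition factors, to conclude that $K^1(\lambda)$ is itself a single composition factor and hence simple. Both consecutive quotients of the Jantzen filtration are therefore semi-simple, so it is a Loewy filtration; rigidity is automatic from the two-composition-factor bound, since in that situation the socle filtration and radical filtration must coincide with this two-step chain. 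This establishes Statement~\ref{main-1}.

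For Statement~\ref{main-2} in the atypical case the Jantzen polynomials are easy to read off from the two-step filtration: letting $\mu$ denote the highest weight of $K^1(\lambda)$ (which coincides with the bottom composition factor $\bar L(\lambda)$ produced in the proof of Theorem~\ref{length}), one obtains $J_{\lambda\lambda}(q)=1$, $J_{\lambda\mu}(q)=q$, and $J_{\lambda\nu}(q)=0$ otherwise. On the Kazhdan--Lusztig side I would apply \cite[Corollary 6.4]{Z}, which in atypical $\mathfrak{osp}_{2|2n}$ blocks gives an equally transparent $p_{\lambda\nu}(q)$; inverting the resulting block-triangular matrix produces explicit $a_{\lambda\nu}(q)$ that can be compared term by term with $J_{\lambda\nu}(q)$. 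The main obstacle here is not computational but bookkeeping: one must verify that the weight $\mu$ distinguished on the Jantzen side as the highest weight of the unique proper submodule really is the same weight singled out by the cohomological definition \eqref{poly} of $p_{\lambda\mu}(q)$. This identification is secured by the $\bar L(\lambda)$ description of the atypical linkage, after which coincidence $J_{\lambda\nu}(q)=a_{\lambda\nu}(q)$ is immediate.
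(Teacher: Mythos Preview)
Your proposal is correct and follows essentially the same route as the paper: both split into the typical and singly atypical cases, use Theorem~\ref{length} together with the two-composition-factor fact to identify the Jantzen filtration with the length-two composition series, deduce rigidity from indecomposability, and then compare the resulting Jantzen polynomials with the inverse Kazhdan--Lusztig polynomials obtained by inverting the formula from \cite[Corollary~6.4]{Z}. The only difference is cosmetic: the paper makes the ``bookkeeping'' step you flag explicit by writing down the weight $\lambda^{(1)}$ of $\bar L(\lambda)$ (via the unique atypical root and a Weyl group element) and the recursion $\lambda^{(i+1)}=(\lambda^{(i)})^{(1)}$, so that the identity $a_{\lambda\lambda^{(1)}}(q)=q=J_{\lambda\lambda^{(1)}}(q)$ is read off directly rather than argued abstractly.
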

\begin{proof}
Consider the Kac module $K(\lambda)$ for $\mathfrak{osp}_{2|2n}$
with highest weight $\lambda\in P_0^+$. If $\lambda$ is typical,
$K(\lambda)$ is irreducible, and the theorem is obviously true.

If $\lambda$ is atypical, we necessarily have $\sharp(\lambda)=1$,
and $K(\lambda)$ has the composition series $K(\lambda)\supset
\bar{L}(\lambda)\supset\{0\}$ of length $2$, which coincides with
the Jantzen filtration by Theorem \ref{length}. Since $K(\lambda)$
is indecomposable, the composition series is the unique Loewy
filtration in this case. The rigidity of $K(\lambda)$ immediately
follows.

The highest weight of the irreducible submodule $\bar{L}(\lambda)$
can be determined in the following way. Denote by $\gamma$ the
unique atypical (positive) root of $\lambda$. Let $k$ be the
smallest positive integer such that $\mu=\lambda-k\gamma$ is
$\fl$-regular in the sense that $(\mu+\rho, \alpha)\ne 0$ for all
$\alpha\in\Delta^+(\fl)$. Then there exists a unique element $w$ in
the Weyl group of $\fg$ such that $\lambda^{(1)}=w(\mu+\rho)-\rho\in
P_0^+$. The highest weight of $\bar{L}(\lambda)$ is $\lambda^{(1)}$.

Define $\lambda^{(i+1)}$ for $i\ge 0$ recursively by
$\lambda^{(i+1)}=(\lambda^{(i)})^{(1)}$. By \cite[Corollary
6.4.]{Z}, $p_{\lambda \lambda^{(i)}}(q)=(-q)^i$ and $p_{\lambda
\mu}=0$ if $\mu\ne \lambda^{(i)}$ for any $i$. We can easily work
out the corresponding inverse Kazhdan-Lusztig polynomials:
\[
a_{\lambda \lambda} =1, \quad a_{\lambda \lambda^{(1)}} =q, \quad
\text{rest}=0.
\]
They clearly agree with the polynomials $J_{\lambda \mu}(q)$,
proving the theorem.
\end{proof}

\subsection{Statement \ref{main-1}
for $\mathfrak{gl}_{m|n}$}\label{main-1-subsec}

Let us first introduce some necessary notation. Denote by $e_{a b}$
($a, b =1, 2, \dots, m+n$) the matrix units of size $(m+n)\times
(m+n)$, which form a basis of $\fg=\mathfrak{gl}_{m|n}$. Let $\fh$
be the subalgebra of the diagonal matrices. Choose a basis
$\es_{-m},...,\es_{-1}$, $\es_1,...,\es_n$ for $\fh^*$ such that
\[
\begin{aligned}
&\es_a(e_{i i}) = \delta_{a, i-m-1}\ \text{ if } \ 1\le i\le m,
\quad &\es_a(e_{j j}) = \delta_{a, j-m}\ \text{ if } \ m<j\le m+n.
\end{aligned}
\]
The bilinear form on $\fg$ defined by the supertrace induces a
bilinear form $(\; , \: )$ on $\fh^*$ such that $(\es_a,
\es_b)=sign(a) \d_{a,b}$, where $sign(a)=a/|a|$. Set $\d_i=\es_{-i}$
for $1\le i\le m$, and write any $\lambda\in\fh^*$ in terms of its
coordinates as
\begin{eqnarray}\label{coordinate}
\lambda=\sum_{i=1}^m \lambda_{m+1-i}\d_i+\sum_{j=1}^n
\lambda_{m+j}\es_j=(\lambda_1 \dots
\lambda_m\mid\lambda_{m+1} \dots \lambda_{m+n}).
\end{eqnarray}
We similarly write $\lambda+\rho=(\lambda^\rho_1 \dots
\lambda^\rho_m \mid \lambda^\rho_{m+1} \dots \lambda^\rho_{m+n})$.
\begin{remark}
The unusual labeling of the basis elements $\d_i$ and $\es_j$ of
$\fh^*$ will become convenient when we discuss weight diagrams in
Section \ref{weight-diagrams}.
\end{remark}

Let us choose the standard Borel subalgebra $\fb\subset\fg$
consisting of upper triangular matrices, which contains the standard
Cartan subalgebra $\fh$. Then the simple roots of $\fg$ are given by
$\d_m-\d_{m-1},...,\d_{2}-\d_1,\d_1-\es_1,\es_1-\es_2,...,\es_{n-1}-\es_n$;
the set of positive even roots and the set of positive odd roots are
respectively given by
\[
\begin{aligned}
\D_0^+&=\{\d_a-\d_
b,\,\es_{a'}-\es_{b'} \mid m\ge a>b\ge 1,\ 1\le a'<b'\le n\},\\
\D_1^+&=\{\d_a-\es_b \mid 1\le a\le m, \ 1\le b\le n\}.
\end{aligned}
\]
Denote by $\fg_{+1}$ (resp. $\fg_{-1}$) the nilpotent subalgebra
spanned by the odd positive (resp. negative) root spaces. Then
$\fg=\fg_{-1}\oplus\fg_0\oplus\fg_{+1}$ with
$\fg_0\cong{\mathfrak{gl}}_m\oplus{\mathfrak{gl}}_n$.

We define a total order on $\D^+_1$ by
\[
\d_a-\es_b<\d_{a'}-\es_{b'}\ \ \ \Lra\
\ \ a+b>a'+b'\mbox{, \ or \ }a+b=a'+b',\,a>a'.
\]
We also introduce the sets
\begin{eqnarray}\label{X+}
X=\sum_{i=1}^m \Z\d_i +\sum_{j=1}^n \Z_+\es_j, \quad X^+=P_0^+\cap
X.
\end{eqnarray}

Now the special linear algebra $\mathfrak{sl}_{m|n}$ is the
subalgebra of $\fg=\mathfrak{gl}_{m|n}$ consisting of matrices with
vanishing supertrace. All information on the category $\cO^\fp$ of
$\mathfrak{sl}_{m|n}$ can be extracted from the corresponding
category of $\mathfrak{gl}_{m|n}$ by tensoring with one dimensional
modules. In particular, if $\zeta\in \fh^*$ satisfies the condition
$(\zeta, \beta)=0$ for all $\beta\in \Delta^+$, by using the tensor
identity  we easily see that $L(\zeta)\otimes_\C K_T(\lambda) =
K_T(\lambda+\zeta)$ as $\fg$-$T$-modules. The following result
immediately follows.
\begin{lemma}\label{tensor}
The isomorphism $L(\zeta)\otimes
K(\lambda)\stackrel{\sim}{\longrightarrow} K(\lambda+\zeta)$ of
$\mathfrak{gl}_{m|n}$-modules  maps
the tensor product
\[
\begin{aligned}
L(\zeta)\otimes K(\lambda)=L(\zeta)\otimes K^0(\lambda)
\supset L(\zeta)\otimes K^1(\lambda) \supset
\dots \supset L(\zeta)\otimes K^{\#(\lambda)}(\lambda)
\supset\{0\}
\end{aligned}
\]
of $L(\zeta)$ ($\dim L(\zeta)=1$) with the Jantzen filtration of
$K(\lambda)$ to the Jantzen filtration for $K(\zeta+\lambda)$:
\begin{eqnarray}\label{image-tensor}
K(\lambda)=K^0(\lambda) \supset  K^1(\lambda) \supset
\dots \supset K^{\#(\lambda)}(\lambda) \supset\{0\}.
\end{eqnarray}

Furthermore, a $\zeta$ can always be chosen to make
the identity matrix in $\fg$ act on $K(\lambda+\zeta)$
by zero. Thus one may regard $K(\lambda+\zeta)$ as an
$\mathfrak{sl}_{m|n}$-module, and \eqref{image-tensor}
its Jantzen filtration.
\end{lemma}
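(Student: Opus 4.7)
The approach is based on the standard tensor identity for induced modules: for any finite-dimensional $\fg$-module $V$ and any $\fp$-$T$-module $W$, one has a natural $\fg$-$T$-isomorphism
$$V\otimes_\C\bigl(\U(\fg)\otimes_{\U(\fp)}W\bigr)\;\cong\;\U(\fg)\otimes_{\U(\fp)}(V\otimes_\C W).$$
The assumption $(\zeta,\beta)=0$ for all $\beta\in\Delta^+$ forces $\zeta$ to lie on the one-dimensional line of characters of $\fg$ (spanned for $\mathfrak{gl}_{m|n}$ by the supertrace weight), so $L(\zeta)$ is one-dimensional with $\fh$ acting by $\zeta$ and every root vector acting trivially. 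Consequently $L(\zeta)\otimes_\C L^0(\lambda)\cong L^0(\lambda+\zeta)$ as $\fp$-modules, and the tensor identity applied to $W=L^0_T(\lambda)$ yields a $\fg$-$T$-isomorphism
$$\Phi\colon L(\zeta)\otimes_\C K_T(\lambda)\stackrel{\sim}{\longrightarrow}K_T(\lambda+\zeta),$$
which, on applying $\cR$, gives $L(\zeta)\otimes K(\lambda)\cong K(\lambda+\zeta)$.

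Next I would show that $\Phi$ intertwines the $T$-bilinear forms used to define the Jantzen filtrations. On the left-hand side I would put the form $\langle v_\zeta\otimes m,\,v_\zeta\otimes m'\rangle':=\langle m,m'\rangle_\lambda$ for a fixed nonzero $v_\zeta\in L(\zeta)$; this is contravariant with respect to $\theta$ because $\theta$ fixes $\fh$ pointwise and $L(\zeta)$ is one-dimensional, so $x$ and $\theta(x)$ act on $v_\zeta$ by the same scalar for every $x\in\U(\fg)$. Both $\langle\,\cdot\,,\,\cdot\,\rangle'$ and $\langle\,\cdot\,,\,\cdot\,\rangle_{\lambda+\zeta}$ are nondegenerate contravariant $T$-bilinear forms with matching values on the corresponding highest-weight generators, and contravariant forms on generalised Verma modules are unique, so the two forms agree through $\Phi$. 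It follows that $\Phi\bigl(L(\zeta)\otimes K_T^i(\lambda)\bigr)=K_T^i(\lambda+\zeta)$ for every $i\ge 0$; applying $\cR$ gives the filtration claim \eqref{image-tensor}. The equality $\sharp(\lambda+\zeta)=\sharp(\lambda)$, ensuring that the two filtrations share the same length, is immediate from $(\zeta,\gamma)=0$ for every $\gamma\in\Delta^+_1$ together with Theorem \ref{length}.

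For the final assertion, the identity matrix $I=\sum_a e_{aa}$ is central in $\mathfrak{gl}_{m|n}$ and acts on $K(\lambda+\zeta)$ by the scalar $(\lambda+\zeta)(I)$. The weights $\zeta$ with $(\zeta,\alpha)=0$ for every simple root form the line of multiples of $\omega=\sum_{i=1}^m\d_i-\sum_{j=1}^n\es_j$, and $\omega(I)=m-n$; hence provided $m\ne n$ there is a unique $a\in\C$ with $(\lambda+a\omega)(I)=0$, and taking $\zeta=a\omega$ makes $I$ act by zero so that $K(\lambda+\zeta)$ descends to an $\mathfrak{sl}_{m|n}$-module carrying the induced Jantzen filtration. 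The main point requiring care is the bookkeeping of the $T$-bilinear forms under the tensor identity, specifically the contravariance of $\langle\,\cdot\,,\,\cdot\,\rangle'$ for the full $\U(\fg)\otimes_\C T$-action; once this is in place, the filtration identification is forced by the uniqueness of the contravariant form on a generalised Verma module.
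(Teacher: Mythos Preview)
Your proposal is correct and follows the same line the paper sketches: the paper merely says that by the tensor identity $L(\zeta)\otimes_\C K_T(\lambda)\cong K_T(\lambda+\zeta)$ as $\fg$-$T$-modules, and that the lemma ``immediately follows''. You have supplied the details the paper omits, namely that the isomorphism intertwines the two contravariant $T$-bilinear forms (by uniqueness of such forms on a generalised Verma module), whence it matches the filtrations $K_T^i$ level by level; this is exactly the content implicit in the paper's one-line justification. Your caveat ``provided $m\ne n$'' in the last paragraph is a genuine restriction the paper does not mention, but it is harmless here since $\mathfrak{sl}_{n|n}$ is not simple and is implicitly excluded from the list of classical simple Lie superalgebras under consideration.
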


Therefore, we shall work only with the general linear superalgebra
$\fg=\mathfrak{gl}_{m|n}$ in the remainder of the paper.

Recall that the inverse Kazhdan-Lusztig polynomials for type I Lie
superalgebras have all been determined explicitly (see \cite{B} and
\cite[Conjecture 4]{VZ}). Each $a_{\lambda \mu}(q)$ is either zero,
or a positive power of $q$. This in particular implies that the
multiplicity of a simple module $L(\mu)$ in a Kac module
$K(\lambda)$ is at most $1$. This fact will be used in a crucial way
in the proof of the following result.
\begin{theorem}\label{semi-simple}
Statement \ref{main-1} holds for $\mathfrak{gl}_{m|n}$, namely, the
Jantzen filtration for any Kac module $K(\lambda)$ with $\lambda\in
P_0^+$ is the unique Loewy filtration.
\end{theorem}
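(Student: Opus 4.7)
The plan is to prove that every Jantzen layer $K_i(\lambda)$ is semisimple and then combine this with the rigidity of Kac modules established in \cite[IV]{BS} and the length computation of Theorem \ref{length} to identify the Jantzen filtration with the unique Loewy filtration of $K(\lambda)$.

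First I would establish semisimplicity of the layers. By Lemma \ref{reducedform}(2) each $K_i(\lambda)$ carries a non-degenerate contravariant bilinear form, and hence is isomorphic to its contravariant dual. Since every simple $\fg$-module in $\cO^\fp$ is self-dual under contravariant duality, this forces $\mathrm{soc}(K_i(\lambda))$ and the top of $K_i(\lambda)$ to share the same composition factors. The essential extra input, recorded just before the theorem, is that the inverse Kazhdan--Lusztig polynomials $a_{\lambda\mu}(q)$ are either zero or a single positive power of $q$ \cite{B, VZ}; consequently every composition factor of $K(\lambda)$ — and therefore of $K_i(\lambda)$ — appears with multiplicity at most one.

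Given a simple submodule $L(\mu) \subseteq K_i(\lambda)$, self-duality produces a surjection $\pi : K_i(\lambda) \twoheadrightarrow L(\mu)$. If the composite $L(\mu) \hookrightarrow K_i(\lambda) \xrightarrow{\pi} L(\mu)$ vanished, the copy of $L(\mu)$ in the socle would lie in $\ker\pi$ and produce a second appearance of $L(\mu)$ as a composition factor of $K_i(\lambda)$, contradicting multiplicity one. The composite is therefore an isomorphism by Schur's lemma, so $L(\mu)$ splits off as a direct summand. Applied to each simple constituent of the socle in turn, one extracts $K_i(\lambda) = \mathrm{soc}(K_i(\lambda)) \oplus K_i'(\lambda)$ with $K_i'(\lambda)$ admitting no new simple submodules; as a finitely generated object of $\cO^\fp$ with vanishing socle it must itself vanish, so $K_i(\lambda)$ is semisimple.

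With semisimplicity of the layers in hand, the Jantzen filtration is a filtration of $K(\lambda)$ of length $r = \sharp(\lambda)$ (Theorem \ref{length}) whose extreme terms already match the canonical ones: $K^1(\lambda) = \mathrm{rad}(K(\lambda))$ by Lemma \ref{reducedform}(1), and $K^r(\lambda) = \bar L(\lambda) = \mathrm{soc}(K(\lambda))$ by Lemma \ref{bottom-T} together with the fact that every nonzero submodule of $K(\lambda)$ contains $\bar L(\lambda)$. Since $K(\lambda)$ is rigid by \cite[IV]{BS}, its radical and socle filtrations coincide and constitute the unique Loewy filtration; the Jantzen filtration must coincide with it provided no layer collapses, i.e.\ that $K^i(\lambda) \neq K^{i+1}(\lambda)$ for all $0 \le i \le r$. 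I expect this no-collapse step to be the main obstacle, and I would overcome it using the combinatorics of weight diagrams together with the technical Lemma \ref{chain-lemma} (proved in Section \ref{chain-lemma+1}) to pinpoint, for each intermediate $i$, a composition factor of $K(\lambda)$ that survives in $K^i(\lambda)/K^{i+1}(\lambda)$. Everything else — the self-duality, the multiplicity-one extraction of summands, and the final upgrade to uniqueness via rigidity — is then formal.
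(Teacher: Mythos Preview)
Your proposal is correct and follows essentially the same strategy as the paper: semisimplicity of each layer via the non-degenerate contravariant form combined with the multiplicity-freeness of Kac modules, the Loewy-length (equivalently, no-collapse) step via Lemma~\ref{chain-lemma}, and uniqueness via the rigidity of Kac modules from \cite[IV]{BS}. The paper phrases the semisimplicity argument with orthogonal complements rather than abstract self-duality, and uses Lemma~\ref{chain-lemma} to bound the Loewy length from below (forcing the Jantzen filtration to be Loewy) rather than to exhibit a surviving factor in each layer, but these are reformulations of the same argument.
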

\begin{proof}
We first show that the consecutive quotients of the Jantzen
filtration is semi-simple. Let $L$ be an irreducible submodule in
$K_i(\lambda)$, and denote by $L^\bot = \{w\in K_i(\lambda) \mid (w,
L)_i=\{0\}\}$, where $( \ , )_i$ is the non-degenerate contravariant
bilinear form on $K_i(\lambda)$ discussed in Lemma
\ref{reducedform}. We want to show that $L^\bot\cap L = \{0\}$,
which implies $K_i(\lambda)=L^\bot\oplus L$. Then by repeating the
argument for $L^\bot$ we can prove the semi-simplicity of
consecutive quotients of the Jantzen filtration.

Now since $( \ , )_i$ induces a contravariant bilinear form $L\times
\frac{K_i(\lambda)}{L^\bot}\longrightarrow \C$, we must have
$\frac{K_i(\lambda)}{L^\bot}\cong L$. Since the multiplicity of $L$
in $K_i(\lambda)$ must be $1$ \cite{B, VZ}, $L^\bot$ can not have
any composition factor isomorphic to $L$. This in particular rules
out the possibility that $L^\bot\supset L$. Hence $L^\bot\cap
L=\{0\}$ since $L$ is irreducible.

We now show that the Jantzen filtration is a Loewy filtration.
Recall that for a module $V$ for $\fg=\mathfrak{gl}_{m|n}$, a
nonzero $\fg_{0}$-highest weight vector $v\in V$ is called a {\it
primitive vector} if there exists a $\fg$-submodule $W$ of $V$ such
that $v\notin W$ but $\gl_{+1}v\in W$. If we can take $W=0$, then
$v$ is called a {\it strongly primitive vector} or  {\it
$\gl$-highest weight vector}. The weight of a primitive vector is
called a {\em primitive weight}, and that of a strongly primitive
vector a {\em strongly primitive weight} or a {\em $\gl$-highest
weight}. Let $v$ and $v'$ be nonzero primitive vectors with distinct
weights. We say that $v'$ is generated by $v$ if $v'\in\U(\fg)v$. We
have the following result.
\begin{lemma}\label{chain-lemma}
{\rm(See also Lemma \ref{chain-lemma-e})} In every Kac module
$K(\l)$ with $\sharp(\lambda)=r$, there exist $r+1$ nonzero
primitive vectors $v_\l=v_0,\,v_1,...,v_r$ with distinct primitive
weights $\mu_i = wt(v_i)$, where $\mu_0=\lambda$, such that $v_k$
can be generated by $v_{k-1}$ for each $k=1,...,r$.
\end{lemma}
The proof of the lemma will be given in Section
\ref{chain-lemma--}.

It immediately follows from Lemma \ref{chain-lemma} that $r$ is the
shortest possible length of all the filtrations of $K(\lambda)$ with
semi-simple consecutive quotients. Thus the Jantzen filtration is a
Loewy filtration.

Finally, we show that the Jantzen filtration of the Kac module is
the unique Loewy filtration.  Let $\cO^\fp_{int}$ be the full
subcategory of the parabolic category $\cO^\fp$ of
$\mathfrak{gl}_{m|n}$ such that each object has only weights in $X$.
Then $\cO^\fp_{int}$ is equivalent to a category of modules of a
generalised Khovanov algebra \cite{BS}. It is one of the
consequences of this equivalence of categories that Kac modules of
$\mathfrak{gl}_{m|n}$ in $\cO^\fp_{int}$ are rigid \cite[IV]{BS}.
Every Kac module in $\cO^\fp$ can be turned into an object in
$\cO^\fp_{int}$ by tensoring with a $1$-dimensional module of
appropriate weight. It follows from the first part of Lemma
\ref{tensor} that every Kac module $K(\lambda)$ for $\lambda \in
P_0^+$ is rigid, thus its Jantzen filtration is the unique Loewy
filtration. This completes the proof of the theorem.
\end{proof}

\begin{remark}
Parabolic BGG categories with multiplicity free generalised Verma
modules for semi-simple Lie algebras were studied extensively in
\cite{CIS, BC, I}. The case of $\mathfrak{gl}_m$ with a maximal
parabolic has been treated in detail in \cite{Str} from a modern
perspective.
\end{remark}

\begin{remark}
In \cite[section 5]{Str}, Stroppel described a precise connection
between diagram algebras introduced in \cite{BS} and the category of
perverse sheaves on Grassmannians. A realization of diagram algebras
as cohomology algebras using the geometry of Springer fibres was
found by Stroppel and Webster in \cite{SW}.
\end{remark}

\subsection{Some equivalences of categories}
We make some preparations for
proving statement \ref{main-2} for $\mathfrak{gl}_{m|n}$
in this subsection.

\subsubsection{Super duality of type $A$} \label{Dual}

The material presented here is largely from \cite{CWZ}. Consider the
embedding of Lie superalgebras
$\mathfrak{gl}_{m|N}\hookrightarrow\mathfrak{gl}_{m|N+1}$ for each
$N$, where the image of $\mathfrak{gl}_{m|N}$ consists of matrices
with vanishing $(m+N+1)$-th row and $(m+N+1)$-th column. This
defines a direct system
\begin{eqnarray}\label{direct-gl}
\mathfrak{gl}_{m|1}\hookrightarrow\mathfrak{gl}_{m|2}\hookrightarrow \dots \hookrightarrow
\mathfrak{gl}_{m|N}\hookrightarrow\mathfrak{gl}_{m|N+1}\hookrightarrow\dots
\end{eqnarray}
of Lie superalgebras, and we denote the direct limit by
$\mathfrak{gl}_{m|\infty}$. Let $\fp_{m|N}\supset \fb_{m|N}\supset
\fh_{m|N}$ be the standard parabolic, Borel, and Cartan suablgebras
of $\mathfrak{gl}_{m|N}$. Then we have the corresponding direct
systems of these subalgebras induced by the embedding of
$\mathfrak{gl}_{m|N}$ in $\mathfrak{gl}_{m|N+1}$ for each $N$. Let
the direct limits be $\fp_{m|\infty}$, $\fb_{m|\infty}$ and
$\fh_{m|\infty}$ respectively.

To emphasize the dependence on $m$ and $N$, we denote by $X(m|N)$
and $X^+(m|N)$ respectively the subsets of $\fh_{m|N}^*$ defined by
\eqref{X+} (with $N=n$). There is the natural $\Z_+$-module
embedding of $X(m|N)$ in $X(m|N+1)$ for each $N$, where the image of
$X(m|N)$ consists of elements with the $(m+N+1)$-th coordinate being
zero. Thus we have the direct limits $X(m|\infty)$ and
$X^+(m|\infty)$. In particular, when we write an element $\lambda\in
X^+(m|\infty)$ in terms of its coordinate $\lambda=(\lambda^- |
\lambda^+)$ in the notation of \eqref{coordinate} (for $n$
infinite), then $\lambda^-$ is an $m$-tuple  and $\lambda^+$, an
infinite tuple, is a partition of finite length. For every finite
$N$, we shall regard every $X(m|N)$ (resp. $X^+(m|N)$) as the subset
of $X(m|\infty)$ (resp. $X^+(m|\infty)$) consisting of elements
$\mu$ satisfying $\mu_{m+N+k}=0$ for all $k>0$.

For each $N$, let $\cO^{\fp_{m|N}}_{int}$ be the parabolic category
of $\mathfrak{gl}_{m|N}$-modules with weights in $X(m|N)$. To
indicate the $N$ dependence, we denote by $K^{m|N}(\lambda)$ and
$L^{m|N}(\lambda)$ the Kac module and irreducible module with
highest weight $\lambda$ respectively. Now $K^{m|N}(\lambda)$ (resp.
$L^{m|N}(\lambda)$) can be embedded in $K^{m|N+1}(\lambda)$ (resp.
$L^{m|N+1}(\lambda)$) as the subspace spanned by weight vectors with
weights $\mu$ satisfying $\mu_{m+N+k}=0$ for all $k>0$. This defines
a direct system of modules compatible with the direct system
\eqref{direct-gl} of Lie superalgebras. Then $K^{m|\infty}(\lambda)$
(resp. $L^{m|\infty}(\lambda)$) is the  direct limit. For each
finite $N$, we have an exact functor $ \tr_N:
\cO^{\fp_{m|\infty}}_{int} \longrightarrow \cO^{\fp_{m|N}}_{int}, $
the truncation functor, which maps each object to the span of the
weight vectors with weights $\mu$ satisfying $\mu_{m+N+k}=0$ for all
$k>0$. In particular,
\[
\tr_N K^{m|\infty}(\lambda) = K^{m|N}(\lambda), \quad
\tr_N L^{m|\infty}(\lambda) = L^{m|N}(\lambda),  \quad
\text{if \ } \lambda\in X^+(m|N)\subset X^+(m|\infty).
\]

One can define generalised Kazhdan-Lusztig polynomials
$p^{(m|N)}_{\lambda \mu}(q)$ as in \eqref{poly} and their inverse
polynomials $a^{(m|N)}_{\lambda \mu}(q)$ for each $N$.  Fix
$\lambda=(\lambda^-\mid \lambda^+)$ and $\mu=(\mu^-\mid\mu^+)$ in
$X^+(m|\infty)$, we may regard them as elements of $X^+(m|N')$ for
any $N'$ greater than the numbers of positive entries in $\lambda^+$
and $\mu^+$. Then
\[
p^{(m|N)}_{\lambda \mu}(q) = p^{(m|N')}_{\lambda \mu}(q),
\quad a^{(m|N)}_{\lambda \mu}(q) = a^{(m|N')}_{\lambda \mu}(q),
\quad \text{for all $N>N'$}.
\]

The above discussion can be repeated verbatim for the series of
ordinary Lie algebras $\mathfrak{gl}_{m+N}$. Let $\fb_{m+N}$ be the
standard Borel subalgebra, and $\fh_{m+N}$ the standard Cartan
subalgebra. Let $X(m+N)$ be the subset of $\fh_{m+N}^*$ consisting
of elements $\lambda$ satisfying $\lambda(e_{i i})\in \Z$ ($i\le m$)
and $\lambda(e_{m+j, m+j})\in \Z_+$ ($j\ge m$), and $X^+(m+N)$ be
the subset of $X(m+N)$ consisting of elements which are dominant
with respect to the subalgebra $\fl_{m+N}=\mathfrak{gl}_m\oplus
\mathfrak{gl}_N$. Let $\fp_{m+N}\supset\fb_{m+N}$ be the parabolic
subalgebra with Levi subalgebra $\fl_{m+N}$. Then we have the
parabolic category $\cO^{\fp_{m+N}}$ of $\mathfrak{gl}_{m+N}$, where
every object is a locally $\fp_{m+N}$ finite weight module with
weights belonging to $X(m+N)$.  Denote by $M^{m+N}(\mu)$ and
$L^{m+N}(\mu)$ respectively the generalised Verma module and
irreducible module with highest weight $\mu$. In the limit
$N\to\infty$, we have $\mathfrak{gl}_{m+\infty}$, $\fp_{m+\infty}$,
$X(m+\infty)$, $X^+(m+\infty)$, and etc. We shall also write $\mu\in
X_{m+\infty}$ as $\mu=(\mu^- \mid \mu^+)$, where $\mu^-=(\mu_1 \
\dots \ \mu_m)$ and $\mu^+=(\mu_{m+1}\ \mu_{m+2} \ \dots )$ with
$\mu_j=\mu(e_{j j})$.

For each finite $N$, we also have the truncation functor $\tr_N:$
$\cO^{\fp_{m+\infty}}_{int} \longrightarrow \cO^{\fp_{m+N}}_{int}$,
which is also an exact functor mapping each object to the span of
weight vectors with weights $\mu$ satisfying $\mu_{m+N+k}=0$ for all
$k>0$. In particular,
\[
\tr_N M^{m+\infty}(\lambda) = M^{m+N}(\lambda), \quad
\tr_N L^{m+\infty}(\lambda) = L^{m+N}(\lambda)
\]
for any $\lambda\in X^+(m+N)\subset X^+(m+\infty)$.

We can define generalised Kazhdan-Lusztig polynomials
$k^{(m+N)}_{\lambda \mu}(q)$ as in \eqref{poly} for
$\mathfrak{gl}_{m+N}$ by using the cohomology of the nilpotent
radical of $\fp_{m+N}$ for each $N$. Then we can also define their
inverse polynomials $b^{(m+N)}_{\lambda \mu}(q)$ for each $N$.  Fix
$\lambda=(\lambda^-\mid \lambda^+)$ and $\mu=(\mu^-\mid\mu^+)$ in
$X^+(m+\infty)$, and regard them as elements of $X^+(m+N')$ for any
$N'$ greater than the numbers of positive entries in  $\lambda^+$
and $\mu^+$. Then
\[
k^{(m+N)}_{\lambda \mu}(q) = k^{(m+N')}_{\lambda \mu}(q),
\quad b^{(m+N)}_{\lambda \mu}(q) = b^{(m+N')}_{\lambda \mu}(q),
\quad \text{for all $N>N'$}.
\]

Given a partition $\nu$, we denote by  $\nu'$  its transpose
partition. Then there is the bijection
\[
^\natural: X^+(m+\infty)\longrightarrow X^+(m|\infty), \quad
\lambda=(\lambda^- | \lambda^+)\mapsto
\lambda^\natural=(\lambda^- | (\lambda^+)').
\]
We shall also denote the inverse map by $^\natural$. It was shown in
\cite{CWZ} that for fixed $\lambda$ and $\mu$ in $X^+(m+\infty)$,
\[
k^{(m+N)}_{\lambda \mu}(q) = p^{(m|N)}_{\lambda^\natural \mu^\natural}(q),
\quad b^{(m+N)}_{\lambda \mu}(q) = a^{(m|N)}_{\lambda^\natural \mu^\natural}(q),
\quad \text{$N$ sufficiently large.}
\]
This and other facts in \cite{CWZ} indicated that the following result
is true.
\begin{theorem}\label{superduality}
There is an equivalence  $\cO^{\fp_{m|\infty}}_{int}
\stackrel{\sim}{\longrightarrow}\cO^{\fp_{m+\infty}}_{int}$ of
categories, which sends
\[
M^{m+\infty}(\lambda) \to K^{m|\infty}(\lambda^\natural),
\quad L^{m+\infty}(\lambda) \to L^{m|\infty}(\lambda^\natural),\quad
\text{for $\lambda\in X^+(m+\infty)$}.
\]
\end{theorem}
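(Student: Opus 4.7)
The plan is to construct an exact functor between the two parabolic categories and verify that it sends generalised Verma modules to Kac modules and simples to simples, after which the equivalence will follow by standard homological arguments on projective generators.

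First, I would use the truncation functors $\tr_N$ on both sides to reduce the statement to matching data at sufficiently large finite $N$ before passing to the direct limit. Since every object in $\cO^{\fp_{m|\infty}}_{int}$ (resp.\ $\cO^{\fp_{m+\infty}}_{int}$) has its weights supported in some $X(m|N)$ (resp.\ $X(m+N)$) for $N$ large enough, and the truncation functors are exact and compatible with the limits above, this reduction is legitimate. The key input already at our disposal is the numerical matching
\[
k^{(m+N)}_{\lambda \mu}(q) = p^{(m|N)}_{\lambda^\natural \mu^\natural}(q), \quad
b^{(m+N)}_{\lambda \mu}(q) = a^{(m|N)}_{\lambda^\natural \mu^\natural}(q),
\]
valid for $N$ sufficiently large. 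In particular, the composition factor multiplicities of $M^{m+\infty}(\lambda)$ and of $K^{m|\infty}(\lambda^\natural)$ match under $^\natural$, and more generally $\dim \Ext^i(M^{m+\infty}(\lambda), L^{m+\infty}(\mu))$ equals $\dim \Ext^i(K^{m|\infty}(\lambda^\natural), L^{m|\infty}(\mu^\natural))$.

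Second, to promote this numerical matching to a genuine equivalence, I would follow the strategy of \cite{CL}: realise both categories as categorifications of a common canonical $\mathfrak{gl}_\infty$-module on a Fock space, using Howe-type dualities. The images of $M^{m+\infty}(\lambda)$ and of $K^{m|\infty}(\lambda^\natural)$ in this Fock space coincide under $^\natural$ on the level of monomial bases, and similarly for the irreducibles, tilting modules, and indecomposable projective covers. The functor implementing the equivalence is then obtained by matching projective generators on both sides and invoking a Morita-type argument: since each truncated category has a progenerator whose endomorphism algebra is described combinatorially, and these endomorphism algebras agree under $^\natural$, an equivalence of categories is produced that induces the claimed correspondences on standard and simple objects.

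The main obstacle is establishing the isomorphism of endomorphism algebras of projective generators, that is, showing the functorial structure really transports across $^\natural$ and not merely the numerical invariants. This is the substantive content of the super duality conjecture and is settled in \cite{CL}. An alternative route is through \cite[IV]{BS}, where Brundan and Stroppel exhibit both categories as module categories over (limits of) generalised Khovanov algebras, from which the equivalence becomes manifest together with the explicit identification of Kac modules and parabolic Verma modules under $^\natural$. Either route yields Theorem \ref{superduality}.
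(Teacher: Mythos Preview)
The paper does not supply its own proof of this theorem. Immediately after the statement it simply records that ``The equivalence has since been proven in \cite{CL} and \cite[IV]{BS} using different methods,'' having noted just before that the result was anticipated by the numerical evidence in \cite{CWZ}. In other words, Theorem \ref{superduality} is quoted as a black box from the literature.

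Your proposal is therefore not in conflict with the paper's proof, because there is none to compare against; and in substance your sketch ends up in the same place, deferring the essential step (identifying endomorphism algebras of projective generators, or equivalently realising both categories over the same diagram/Fock-space combinatorics) to \cite{CL} and \cite[IV]{BS}. As a roadmap of those arguments your outline is reasonable. One small inaccuracy: it is not true that every object of $\cO^{\fp_{m|\infty}}_{int}$ has all its weights supported in some fixed $X(m|N)$; already a single Kac module $K^{m|\infty}(\lambda)$ has weights with arbitrarily many nonzero coordinates. What is true, and what the paper uses, is that for fixed $\lambda,\mu$ the relevant multiplicities and Ext-dimensions stabilise for $N$ large, so each individual numerical question can be answered at finite level. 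This does not affect your conclusion, since you ultimately invoke \cite{CL} or \cite[IV]{BS} anyway, but the reduction-to-finite-$N$ step as you phrased it would need that correction.
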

The equivalence has since been proven in \cite{CL}
and \cite[IV]{BS} using different methods.

\begin{remark}
The ``super duality" was first observed in \cite{CZ} for tensorial
representations of the general linear superalgebra. A similar
duality for the orthosymplectic Lie superalgebras was recently
established in \cite{CLW}.
\end{remark}

\subsubsection{Jantzen filtration under Serganova's equivalence of categories}
We shall also require a result of \cite{Se98}, which we now explain.
For any $m$ and $n$, we denote by $P_0^+(m|n)$ the set of integral
dominant $\mathfrak{gl}_{m|n}$-weights defined by \eqref{P0+}. Given
a $\lambda^{(m|n)}\in P_0^+(m|n)$, we denote by
$\cO^{\fp_{m|n}}(\lambda^{(m|n)})$ the full subcategory of the
category $\cO^{\fp_{m|n}}$ for $\mathfrak{gl}_{m|n}$-modules with
infinitesimal character specified by $\lambda^{(m|n)}$. We shall
also write $\rho^{(m|n)}$ for the $\rho$ of $\mathfrak{gl}_{m|n}$ to
emphasize the dependence on $m$ and $n$.

If a weight $\lambda^{(m|n)}=(\lambda_1, \dots,\lambda_m \mid
\lambda_{m+1}, \dots, \lambda_{m+n})\in P_0^+(m|n)$ is $r$-fold
atypical, there exist $m\ge i_r>\dots>i_1\ge 1$ and $1\le
j_{1}<\dots<j_{r}\le n$ such that $\g_s=\d_{i_s}-\es_{j_s}$,
$s=1,...,r$, are the atypical roots, namely,
$\l^\rho_{m+1-i_s}=-\l^\rho_{m+j_s}$ (recall notation below
\eqref{coordinate}). Following \cite{SZ2}, we introduce the {\it
height vector} of $\l^{(m|n)}$: \equan{Height-v}{
h(\l^{(m|n)})=\left(h_1(\l^{(m|n)}),...,h_r(\l^{(m|n)})\right), \
\text{ with } \ h_s(\l^{(m|n)})=\l_{m+1-i_s}-j_s+s. } Now define a
$\mathfrak{gl}_{r|r}$-weight $\lambda^{(r|r)}\in P_0^+(r|r)$ by
\begin{eqnarray}\label{r-r}
\begin{aligned}
&\lambda^{(r|r)}+\rho^{(r|r)} = \left(h'(\l^{(m|n)})\right|
\left. -h(\l^{(m|n)})\right), \ \text{ \ where \ }\\
&h'(\l^{(m|n)})=\left(h_r(\l^{(m|n)}),...,h_1(\l^{(m|n)})\right).
\end{aligned}
\end{eqnarray}
Note that we necessarily have $r \le  min(m, n)$.

\begin{remark}
If we use weight diagrams to represent weights as in Section
\ref{weight-diagrams}, the weight diagram of $\l^{(r|r)}$ is simply
obtained from that of $\l^{(m|n)}$ by deleting all $>$'s, $<$'s and
their corresponding vertices, then re-indexing the remaining
vertices.
\end{remark}

The following result is due to Serganova \cite{Se98} (see also
\cite[Remark 3.2]{SZ2}).
\begin{theorem}[\cite{Se98, GS}]\label{equ-cates}
Keep notation as above.
There is an equivalence of
categories $\cO^{\fp_{m|n}}(\lambda^{(m|n)})
\stackrel{\sim}{\longrightarrow} \cO^{\fp_{r|r}}(\lambda^{(r|r)})$,
which in particular sends
\[
L^{m|n}(\mu^{(m|n)}) \to L^{r|r}(\mu^{(r|r)}),
\quad K^{m|n}(\mu^{(m|n)}) \to K^{r|r}(\mu^{(r|r)}),
\]
for any $\mu^{(m|n)}\in P_0^+(m|n)$ belonging to the same block as
$\l^{(m|n)}$, where $\mu^{(r|r)}\in P_0^+(r|r)$ is defined by
\eqref{r-r}.
\end{theorem}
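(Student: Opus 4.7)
The plan is to establish the equivalence via translation functors, exploiting the block structure of $\cO^{\fp_{m|n}}$ for type I superalgebras. First, I would analyse the block $\cO^{\fp_{m|n}}(\lambda^{(m|n)})$: the central character of an $r$-atypical weight $\lambda^{(m|n)}$ is determined by two pieces of data, namely the Weyl orbit of the ``typical part'' obtained after removing the contribution of the $r$ mutually orthogonal atypical roots $\gamma_1, \dots, \gamma_r$, and the relative configuration of those atypical roots on the lattice. The height vector $h(\lambda^{(m|n)})$ is precisely designed to record the second piece of data, while the first piece distinguishes one block from another within a fixed atypicality degree.

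Next I would construct a translation functor $\Phi: \cO^{\fp_{m|n}}(\lambda^{(m|n)}) \to \cO^{\fp_{r|r}}(\lambda^{(r|r)})$ by iterating the following reduction step: tensor with a carefully chosen finite dimensional $\fg$-module (built from the natural representation and its dual) and then project onto the appropriate central character block. Each step ``absorbs'' one typical coordinate from $\lambda^{(m|n)}$, reducing the rank of the ambient superalgebra by one while preserving the atypical data encoded in $h(\lambda^{(m|n)})$. After $(m-r)+(n-r)$ steps one arrives at a block of $\mathfrak{gl}_{r|r}$, and the surviving atypical coordinates match the definition \eqref{r-r} of $\lambda^{(r|r)}$. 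A quasi-inverse functor $\Psi$ is built symmetrically by tensoring with the adjoint representations and projecting back.

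To show that $\Phi\circ\Psi$ and $\Psi\circ\Phi$ are isomorphic to identities, I would verify that on the level of Grothendieck groups both functors permute isomorphism classes of simple modules according to the bijection $\mu^{(m|n)} \leftrightarrow \mu^{(r|r)}$ coming from \eqref{r-r}; combined with exactness and the observation that each block has only finitely many simples sharing the given central character, a standard argument (based on the non-vanishing of $\Phi L^{m|n}(\mu^{(m|n)})$ and Fitting-type reasoning) upgrades this bijection to a natural isomorphism. The statements about Kac modules then follow from analysing how $\Phi$ acts on tensor products $K^{m|n}(\mu^{(m|n)}) \otimes (\text{finite-dim})$ via a suitable filtration whose subquotients are themselves Kac modules.

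The main obstacle will be verifying that these iterated translation functors are genuinely mutually quasi-inverse and land in the intended block; this requires a careful matching of central characters, since for Lie superalgebras two dominant weights can share a central character in rather subtle ways (as already visible in the proof of Lemma \ref{non-degeneracy}). An alternative route would be to bypass translation functors and instead invoke the Brundan--Stroppel description of $\cO^{\fp_{m|n}}_{int}$-blocks as modules over generalised Khovanov algebras, and observe that this algebra depends on $(m,n,\lambda^{(m|n)})$ only through the combinatorial data recovered by $h(\lambda^{(m|n)})$; however this would be historically anachronistic relative to Serganova's original argument, so my preferred route remains the direct translation-functor construction sketched above.
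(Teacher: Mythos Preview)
The paper does not actually prove this theorem: it is stated with attribution to \cite{Se98, GS} and immediately followed by a remark pointing to \cite[Theorem 5.2]{GS}, \cite[Lemma 7.14]{GS}, \cite[Proposition 2.7]{Se98}, and \cite[IV, Theorem 1.1]{BS} for the proof. So there is no ``paper's own proof'' to compare against; the authors simply import the result from the literature.

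That said, your proposed argument has a genuine gap that should be flagged. Translation functors are endofunctors of the module category of a \emph{fixed} Lie superalgebra: tensoring a $\mathfrak{gl}_{m|n}$-module with a finite-dimensional $\mathfrak{gl}_{m|n}$-module and projecting to a block produces another $\mathfrak{gl}_{m|n}$-module, not a $\mathfrak{gl}_{m-1|n}$- or $\mathfrak{gl}_{r|r}$-module. So the sentence ``each step absorbs one typical coordinate \dots\ reducing the rank of the ambient superalgebra by one'' cannot be correct as written; translation functors alone cannot bridge module categories of \emph{different} superalgebras. What the cited references actually do is more elaborate: Gruson--Serganova build the equivalence geometrically via the cohomology of generalised super-Grassmannians, while the Brundan--Stroppel route (which you mention as an alternative) identifies each block with a module category over a diagram algebra depending only on the combinatorial block data. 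Your instinct that the block is controlled by the height vector $h(\lambda^{(m|n)})$ is correct and is exactly what underlies those proofs, but the mechanism connecting $\cO^{\fp_{m|n}}$ to $\cO^{\fp_{r|r}}$ is not iterated translation.

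A secondary issue: the claim that ``each block has only finitely many simples sharing the given central character'' is false for atypical blocks of $\cO^{\fp_{m|n}}$; an $r$-atypical block has infinitely many simple objects (indexed by the positions of the $\times$'s in the weight diagram). So the Fitting-type upgrade from a Grothendieck-group bijection to a categorical equivalence would need a different justification even if the functor existed.
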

\begin{remark}
This is a special case of Theorem 5.2 in \cite{GS}, which also
covers the orthosymplectic Lie superalgebras. The equivalence of
categories sends Kac modules to Kac modules and irreducibles to
irreducibles by \cite[Lemma 7.14]{GS} and \cite[Proposition
2.7]{Se98}. Theorem \ref{equ-cates} also follows from recent results
of Brundan and Stroppel (see \cite[IV, Theorem 1.1]{BS}).
\end{remark}

Note that the category $\cO^{\fp_{r|r}}(\lambda^{(r|r)})$ of
$\mathfrak{gl}_{r|r}$-modules is the maximally atypical block of
$\cO^{\fp_{r|r}}$ containing the trivial module.

As a corollary of Theorem \ref{equ-cates} and Theorem
\ref{semi-simple}, we have the following result on the Jantzen
filtration for Kac modules.
\begin{lemma}\label{J-To-J}
Let $\lambda^{(m|n)}, \mu^{(m|n)}\in P_0^+(m|n)$ be in the same
block. Denote by $\lambda^{(r|r)}$ and $\mu^{(r|r)}$ the
corresponding weights in $P_0^+(r|r)$ defined by \eqref{r-r}. Under
the equivalence of categories of Theorem \ref{equ-cates}, the
Jantzen filtration for $K^{(m|n)}(\lambda^{(m|n)})$ corresponds to
the Jantzen filtration for $K^{(r|r)}(\lambda^{(r|r)})$.
Furthermore, $J_{\lambda^{(m|n)} \mu^{(m|n)}}(q)=J_{\lambda^{(r|r)}
\mu^{(r|r)}}(q)$.
\end{lemma}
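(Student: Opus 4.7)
The plan is to reduce everything to the observation that the Jantzen filtration of a Kac module of $\mathfrak{gl}_{m|n}$, although defined a priori via the deformation $\phi$ and the contravariant form \eqref{form}, coincides with an intrinsic, purely categorical object, namely the unique Loewy filtration. Once this is in hand, both assertions of the lemma will fall out of general nonsense about equivalences of abelian categories.

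First I would invoke Theorem \ref{semi-simple} on both sides: the Jantzen filtration of $K^{m|n}(\lambda^{(m|n)})$ is the unique Loewy filtration of this module in $\cO^{\fp_{m|n}}(\lambda^{(m|n)})$, and similarly the Jantzen filtration of $K^{r|r}(\lambda^{(r|r)})$ is the unique Loewy filtration of that module in $\cO^{\fp_{r|r}}(\lambda^{(r|r)})$. Since being a Loewy filtration (semisimplicity of consecutive quotients, minimality of length) depends only on the underlying abelian category, any equivalence $F:\cO^{\fp_{m|n}}(\lambda^{(m|n)})\stackrel{\sim}{\longrightarrow}\cO^{\fp_{r|r}}(\lambda^{(r|r)})$ carries Loewy filtrations to Loewy filtrations. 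Theorem \ref{equ-cates} gives such an $F$ sending $K^{m|n}(\lambda^{(m|n)})$ to $K^{r|r}(\lambda^{(r|r)})$, and by the uniqueness of the Loewy filtration one deduces $F\bigl(K^{i,m|n}(\lambda^{(m|n)})\bigr)=K^{i,r|r}(\lambda^{(r|r)})$ for every $i\ge 0$, which is the first claim.

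For the equality of Jantzen polynomials, I would then argue as follows. Since the filtrations correspond term by term under $F$, the consecutive quotients also correspond, i.e.\ $F\bigl(K_i^{m|n}(\lambda^{(m|n)})\bigr)\cong K_i^{r|r}(\lambda^{(r|r)})$ for all $i$. Because $F$ is an equivalence of abelian categories it preserves Jordan--H\"older multiplicities, and by Theorem \ref{equ-cates} it sends $L^{m|n}(\mu^{(m|n)})$ to $L^{r|r}(\mu^{(r|r)})$ for every $\mu^{(m|n)}$ in the same block as $\lambda^{(m|n)}$. Hence
\[
[K_i^{m|n}(\lambda^{(m|n)}):L^{m|n}(\mu^{(m|n)})]
=[K_i^{r|r}(\lambda^{(r|r)}):L^{r|r}(\mu^{(r|r)})]
\]
for all $i$, and summing against $q^i$ as in \eqref{J-poly} yields $J_{\lambda^{(m|n)}\mu^{(m|n)}}(q)=J_{\lambda^{(r|r)}\mu^{(r|r)}}(q)$.

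The only subtlety, and thus the step that deserves a sentence of care, is the appeal to the intrinsic/categorical characterization of the Jantzen filtration. The construction in Subsection \ref{filtration-subsec} is manifestly extrinsic—it uses $T=\C[[t]]$, the deformation $\phi$, and the form \eqref{form}—so without Theorem \ref{semi-simple} there would be no reason for $F$ to respect it. It is precisely the rigidity of Kac modules, and the fact that the Jantzen filtration realizes the unique Loewy filtration on each block, that lets us trade the extrinsic definition for an intrinsic one and then transport everything across $F$. Everything else in the proof is formal.
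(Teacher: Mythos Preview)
Your argument is correct and matches the paper's proof essentially verbatim: both invoke Theorem \ref{semi-simple} to identify the Jantzen filtration with the unique Loewy (equivalently, radical) filtration on each side, transport this intrinsic object across the equivalence of Theorem \ref{equ-cates}, and then read off the equality of Jantzen polynomials from the correspondence of simples and of layers. Your final paragraph explaining why the extrinsic-to-intrinsic reduction via Theorem \ref{semi-simple} is the crux is a welcome elaboration that the paper leaves implicit.
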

\begin{proof}
The Jantzen filtration for $K^{(m|n)}(\lambda^{(m|n)})$ is the
radical filtration, which is sent to the radical filtration for
$K^{(r|r)}(\lambda^{(r|r)})$ by the equivalence of categories of
Theorem \ref{equ-cates}. By Theorem \ref{semi-simple}, the radical
filtration for $K^{(r|r)}(\lambda^{(r|r)})$ is the Jantzen
filtration. The second part of the lemma immediately follows.
\end{proof}

\subsection{Statement \ref{main-2} for
$\mathfrak{gl}_{m|n}$}
Now we are in a position to prove the following theorem.

\begin{theorem}\label{KL}
Statement \ref{main-2} holds for $\mathfrak{gl}_{m|n}$. That is, the
decomposition numbers of the layers of the Jantzan filtration of any
Kac module for $\mathfrak{gl}_{m|n}$ are determined by the
coefficients of inverse Kazhdan-Lusztig polynomials.
\end{theorem}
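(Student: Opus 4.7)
The plan is to reduce Theorem \ref{KL} to known results of Collingwood--Irving--Shelton \cite{CIS}, Boe--Collingwood \cite{BC}, and Irving \cite{I} on generalised Verma modules for the ordinary general linear algebra, by combining Serganova's equivalence of categories (Theorem \ref{equ-cates}) with super duality (Theorem \ref{superduality}).

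My first step is to observe that both sides of the asserted identity are categorical invariants of $\cO^\fp$. By Theorem \ref{semi-simple}, the Jantzen filtration of a Kac module $K(\lambda)$ for $\mathfrak{gl}_{m|n}$ coincides with its radical filtration, so $J_{\lambda\mu}(q)$ depends only on the abelian category structure. The polynomials $a_{\lambda\mu}(q)$ are likewise categorical: by \eqref{poly} together with the identity $[H^i(\fu, L(\lambda)): L^0(\mu)]=\dim \Ext^i(K(\mu), L(\lambda))$, the matrix $P(q)$ is built entirely from Ext groups in $\cO^\fp$ and $A(q)$ is its inverse. Thus any equivalence of categories preserves both sides of the desired identity, and applying Lemma \ref{J-To-J} and Theorem \ref{equ-cates} I may reduce to the maximally atypical block of $\cO^{\fp_{r|r}}$ for $\mathfrak{gl}_{r|r}$, where $r=\sharp(\lambda)$.

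Next I will use super duality (Theorem \ref{superduality}) together with the stabilisation results of Subsection \ref{Dual} to identify the Kac module $K^{r|\infty}(\lambda)$ with the generalised Verma module $M^{r+\infty}(\lambda^\natural)$ in the ordinary parabolic category $\cO^{\fp_{r+\infty}}_{int}$ for $\mathfrak{gl}_{r+\infty}$. Applying the exact truncation functor $\tr_N$ for $N$ sufficiently large (so that all composition factors of interest survive) reduces the problem to the finite rank generalised Verma module $M^{r+N}(\lambda^\natural)$ in $\cO^{\fp_{r+N}}$. I then invoke the main results of \cite{CIS, BC, I}: for generalised Verma modules attached to the maximal parabolic $\fp_{r+N}\subset\mathfrak{gl}_{r+N}$, the radical filtration coincides with the ordinary Jantzen filtration, and the multiplicities of its consecutive layers are the coefficients of the inverse Kazhdan--Lusztig polynomial $b^{(r+N)}_{\lambda^\natural\mu^\natural}(q)$. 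Combining this with the stability identity $b^{(r+N)}_{\lambda^\natural\mu^\natural}(q) = a^{(r|N)}_{\lambda\mu}(q)$ recorded in Subsection \ref{Dual}, and transporting back via super duality and Serganova's equivalence, will yield $J_{\lambda\mu}(q) = a_{\lambda\mu}(q)$.

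The main obstacle I anticipate is the careful transport of the radical filtration through the chain of categorical correspondences. Super duality is a formal equivalence of categories, hence automatically preserves radical filtrations, but one must still verify that the truncation functor $\tr_N$ carries the radical filtration of the limit module faithfully to the radical filtration at finite level for $N$ sufficiently large, and that the resulting decomposition numbers stabilise in $N$. Granted these compatibilities, the combinatorial identity $a^{(r|N)}_{\lambda\mu}(q) = b^{(r+N)}_{\lambda^\natural\mu^\natural}(q)$ together with the \cite{CIS, BC, I} input on multiplicity free parabolic Verma modules closes the argument.
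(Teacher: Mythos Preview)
Your proposal is correct and follows essentially the same strategy as the paper: reduce via Serganova's equivalence (Theorem \ref{equ-cates}, Lemma \ref{J-To-J}) to the maximally atypical block, pass to $\mathfrak{gl}_{r|\infty}$ and across super duality (Theorem \ref{superduality}) to $\mathfrak{gl}_{r+\infty}$, truncate to finite rank, and invoke \cite{CIS, BC, I} together with the stability identity $b^{(r+N)}_{\lambda^\natural\mu^\natural}(q)=a^{(r|N)}_{\lambda\mu}(q)$. Your emphasis that both $J_{\lambda\mu}$ and $a_{\lambda\mu}$ are categorical invariants (via Theorem \ref{semi-simple} and the $\Ext$ interpretation of $p_{\lambda\mu}$) is a clean way to justify the transport; the paper instead records these compatibilities piecemeal (Lemma \ref{J-To-J} and the explicit stability of the polynomials), and at the end uses Lemma \ref{tensor} to pass from $\cO^\fp_{int}$ to all of $\cO^\fp$, a step you should make explicit.
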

\begin{proof}
The claim is true for typical Kac modules in a trivial way.

By Theorem \ref{equ-cates} and Lemma \ref{J-To-J}, in order to show
that the claim is true for $r$-fold atypical Kac modules for
$\mathfrak{gl}_{m|N}$, it suffices to prove it for $r$-fold atypical
Kac modules for $\mathfrak{gl}_{r|N}$ for any $N\ge r$.

Let $X^+_r$ be the subset of $X^+(r|\infty)$ consisting of $r$-fold
atypical weights. For any fixed element $\lambda \in X^+_r$, let
$K(\lambda)=K^{(r|\infty)}(\lambda)$ and consider the Jantzen
filtration for $K(\lambda)$,
\begin{eqnarray}\label{Kinfty}
K(\lambda)=K^0(\lambda)\supset K^1(\lambda)\supset
\dots \supset K^r(\lambda) \supset \{0\},
\end{eqnarray}
which is defined to be the direct limit of the Jantzen filtrations
of $K^{(r|N)}(\lambda)$, $N\ge r$ (that is, each $K^i(\lambda)$ is a
direct limit). Since by Theorem \ref{semi-simple} the Jantzen
filtration for every finite $N$ is a radical filtration,
\eqref{Kinfty} is also a radical filtration with the consecutive
quotients $K_i(\lambda)=K^i(\lambda)/K^{i+1}(\lambda)$ being
semi-simple. Let \equan{MMMMMM}{\mbox{$\Sigma_r^i(\lambda)=\{\mu\in
X^+_r \mid [K_i(\lambda): L^{(r|\infty)}(\mu)]>0\}$, which is a
finite set.}} Set $\Sigma_r(\lambda)=\cup_{i=0}^r
\Sigma_r^i(\lambda)$. Since the multiplicity of each composition
factor of $K(\lambda)$ is at most $1$, $\Sigma_r^i(\lambda)\cap
\Sigma_r^j(\lambda)=\{0\}$ if $i\ne j$. Furthermore, we may replace
the condition $[K_i(\lambda): L^{(r|\infty)}(\mu)]>0$ by
$[K_i(\lambda): L^{(r|\infty)}(\mu)]=1$ in the definition of
$\Sigma_r^i$.

The super duality functor of Theorem \ref{superduality} sends
\eqref{Kinfty} to a filtration
\begin{eqnarray}\label{Verma-filt}
M(\lambda^\natural)=M^0(\lambda^\natural)\supset M^1(\lambda^\natural)
\supset \dots \supset M^r(\lambda^\natural) \supset \{0\}
\end{eqnarray}
of the generalised Verma module $M(\lambda^\natural)
=M^{(r+\infty)}(\lambda^\natural)$ of $\mathfrak{gl}_{r+\infty}$. It
is crucial to observe that \eqref{Verma-filt} is a radical
filtration since \eqref{Kinfty} is. The semi-simple consecutive
quotients
$M_i(\lambda^\natural)=M^i(\lambda^\natural)/M^{i+1}(\lambda^\natural)$
of \eqref{Verma-filt} satisfy
\[
[M_i(\lambda^\natural): L^{(r+\infty)}(\mu)]=
\left\{
\begin{array}{l l}
1, & \text{if $\mu^\natural\in \Sigma^i_r(\lambda)$},\\
0, &\text{otherwise}.
\end{array}
\right.
\]
Since $\Sigma_r(\lambda)$ is a finite set, there exists an $n_0$ such
that for any $N\ge n_0$, we have the equalities of multiplicities
\[
\begin{aligned}
&[\tr_N K_i(\lambda): L^{(r|N)}(\mu)]=[K_i(\lambda): L^{(r|\infty)}(\mu)], \\
&[\tr_N M_i(\lambda^\natural): L^{(r+N)}(\mu^\natural)]=[M_i(\lambda^\natural):
L^{(r+\infty)}(\mu)]
\end{aligned}
\]
for all $i$ and $\mu\in\Sigma_r(\lambda)$.

The exact truncation functor $\tr_N$ maps the radical filtration
\eqref{Verma-filt} for $M(\lambda^\natural)$ to a radical filtration
\[
\tr_N M^0(\lambda^\natural)\supset \tr_N M^1(\lambda^\natural)
\supset \dots \supset \tr_N M^r(\lambda^\natural) \supset \{0\}
\]
for the $\mathfrak{gl}_{r+N}$-module $\tr_N M(\lambda^\natural)$.
Note that $\tr_N M(\lambda^\natural)$ is the generalised Verma
module $M^{(r+N)}(\lambda^\natural)$, and $\tr_N
M_i(\lambda^\natural)=\tr_N M^i(\lambda^\natural)/\tr_N
M^{i+1}(\lambda^\natural)$ for all $i$. It is clear that
\[
\begin{aligned}
J_{\lambda \mu}(q)
&:=\sum_{i=0}^r q^i [\tr_N K_i(\lambda): L^{(r|N)}(\mu)] \\
&=\sum_{i=0}^r q^i [K_i(\lambda): L^{(r|\infty)}(\mu)] \\
&=\sum_{i=0}^r q^i [\tr_N M_i(\lambda^\natural):
L^{(r+N)}(\mu^\natural)].
\end{aligned}
\]

By \cite{CIS, BC, I}, $\cO^{\fp_{m+N}}_{int}$ is a multiplicity free
highest weight category with rigid generalised Verma modules.
Furthermore, the layers of the radical filtration of a generalized
Verma module are described by the coefficients of inverse
Kazhdan-Lusztig polynomials \cite[Corollary 7.1.3]{I} (also see
\cite{BC}). Therefore, for the generalized Verma module
$M^{(r+N)}(\lambda^\natural)$ in $\cO^{\fp_{m+N}}_{int}$,
\[
\sum_{i=0}^r q^i [\tr_N M_i(\lambda^\natural):
L^{(r+N)}(\mu^\natural)]=b^{(r+N)}_{\lambda^\natural
\mu^\natural}(q).
\]

\begin{remark}
This formula follows from \cite[Corollary 7.1.3]{I} upon converting
conventions for Kazhdan-Lusztig polynomials. Note that some proofs
in \cite{I} are erroneous but could be rectified, and all results
are valid.
\end{remark}

For $N$ sufficiently large, we have $b^{(r+N)}_{\lambda^\natural
\mu^\natural}(q)=a^{(r|N)}_{\lambda \mu}(q)$. This immediately leads
to $J_{\lambda \mu}(q)=a^{(r|N)}_{\lambda \mu}(q)$. Using the second
part of Lemma \ref{J-To-J}, we conclude that Statement \ref{main-2}
holds for any $r$-fold atypical block in $\cO^{\fp_{m|n}}_{int}$
($r\le min(m, n)$). Since every Kac module can be turned into an
object of $\cO^{\fp_{m|n}}_{int}$ by tensoring with an appropriate
$1$-dimensional module, Statement \ref{main-2} also holds in
$\cO^{\fp_{m|n}}$ by the first part of Lemma \ref{tensor}. This
completes the proof of the theorem.
\end{proof}

\begin{remark}
By using methods of Section \ref{lattices} and techniques from
\cite{SZ2}, one can prove Theorem \ref{KL} without resorting to
``super duality".
\end{remark}

\section{Proof of Lemma \ref{chain-lemma}}\label{chain-lemma+1}
In this section we prove Lemma \ref{chain-lemma}, which was used
in the proof of Theorem \ref{semi-simple}. To do this, we need to
introduce some notions related to submodule lattices of Kac modules.
We continue to denote $\mathfrak{gl}_{m|n}$ by $\fg$ throughout the
section.

\subsection{Primitive weight graphs}\label{graph}

For a primitive weight $\mu$ of a $\fg$-module $V$, we shall use
$v_\mu$ to denote a nonzero primitive vector of weight $\mu$ which
generates an indecomposable submodule. Two primitive vectors which
generate the same indecomposable submodule are regarded the same. Denote by $P(V)$ the
multi-set of primitive weights of $V$, where the multiplicity
$m_\mu$ of a primitive weight $\mu$ is equal to the dimension of the
subspace spanned by all the primitive vectors with weight $\mu$. In
the case when $V$ is a submodule or subquotient of a Kac module for
$\fg$, all primitive weights have multiplicity 1. For $\mu,\nu\in
P(V)$, if $\mu\ne\nu$ and $v_\nu\in \U(\gl)v_\mu$, we say that $\nu$
is {\em derived from} $\mu$ and write $\nu\dlar\mu$  or
$\mu\drar\nu$. If $\mu\drar\nu$ and there exists no $\l\in P(V)$
such that $\mu\drar\l\drar\nu$, then we say that $\nu$ is {\it
directly derived from} $\mu$ and write $\mu\rrar\nu$ or
$\nu\llar\mu$.
Occasionally we use $\mu\stackrel{e}{\dlar}\nu$,
$\nu\stackrel{e}{\drar}\mu$, $\mu\stackrel{e}{\sc\lar}\nu$,
$\nu\stackrel{e}{\sc\rar}\mu$ to emphasis the fact that
$v_\mu\!\in\! \U(\gl^+)\fg^+v_\nu$.
\begin{definition}\label{defi6.1}
We associate $P(V)$ with a directed graph, still denoted by $P(V)$,
in the following way: the vertices of the graph are elements of the
multi-set $P(V)$. Two vertices $\l$ and $\mu$ are connected by a
single directed edge pointing toward $\mu$ if and only if $\mu$ is
derived from $\l$. We shall call this graph the {\it primitive
weight graph of  $V$}. The {\em skeleton} of the primitive weight
graph is the subgraph containing all the vertices of $P(V)$ such
that two vertices $\l$ and $\mu$ are connected (by a single directed
edge pointing toward $\mu$) if and only if $\mu$ is directly derived
from $\l$ (in this case we say that the two weights are linked).
\end{definition}

Note that a primitive weight graph is uniquely determined by its skeleton.

A {\it full subgraph} $S$ of $P(V)$ is a subset of $P(V)$ which
contains all the edges linking vertices of $S$. We call a full
subgraph $S$ {\it closed} if $\mu\drar\eta\drar\nu$ implies $\eta\in
S$ for any $\eta\in P(V)$ and $ \mu,\nu\in S$. It is clear that a
module is indecomposable if and only if its primitive weight graph
is {\it connected} (in the usual sense), and that a full subgraph of
$P(V)$ corresponds to a subquotient of $V$ if and only if it is
closed. Thus a full subgraph with only $2$ weights is always closed.
For a directed graph $\G$, we denote by $M(\G)$ any module with
primitive weight graph $\G$ if such a module exists. If $\G$ is a
closed full subgraph of $P(V)$, then $M(\G)$ always exists, which is
a subquotient of $V$.
\begin{definition}\label{chain-def}
A subgraph of $P(V)$ of the form
$\mu_0\drar\mu_1\drar\mu_2\drar\cdots\drar\mu_k$ is called a {\it
chain of length $k$.} If $\mu_{i+1}$ is directly derived from $\mu_i$
for every $i$, we say that the chain is {\it exact}.
\end{definition}

\begin{remark}\label{lattice} If
every composition factor of $V$ is a highest weight module, the
primitive weight graph $P(V)$ provides a convenient graphical
representation of the submodule lattice $S(V)$ of $V$. A chain in
$P(V)$ corresponds to a chain in $S(V)$ with the submodules being
those generated by the primitive vectors. If a chain in $P(V)$ is
exact, the corresponding chain in $S(V)$ has the property that every
inclusion of a submodule by a neighbour in the chain is a covering.
\end{remark}
\begin{remark}
Note the difference in the terminologies used here and in
\cite{SZ1}. In the terminology of this paper, \cite[Definition
6.2]{SZ1} was for the skeleton of a primitive weight graph.
\end{remark}

Since the Jantzen filtration for the Kac module $K(\lambda)$ has
semi-simple consecutive quotients (see the first part in the proof
of Theorem \ref{semi-simple}),  and has length $r=\sharp(\lambda)$
(by Theorem \ref{length}), one immediately obtains the following
result.
\begin{corollary}\label{Coro--}
Every chain in the primitive weight graph $P(K(\l))$ of the Kac
module $K(\l)$ has length at most $r=\sharp(\lambda)$.
\end{corollary}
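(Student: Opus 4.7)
The plan is to attach a ``Jantzen level'' to each primitive weight of $K(\lambda)$ and show that the derivation relation strictly increases this level. Recall from the proof of Theorem~\ref{semi-simple} that every simple module $L(\nu)$ appears with multiplicity at most $1$ in $K(\lambda)$. Combined with semi-simplicity of each layer $K_j(\lambda)=K^j(\lambda)/K^{j+1}(\lambda)$ and the length bound from Theorem~\ref{length}, this lets me define, for each primitive weight $\nu$, a unique index $\ell(\nu)\in\{0,1,\dots,r\}$ characterised by $L(\nu)\subset K_{\ell(\nu)}(\lambda)$.

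Next I would verify that the primitive vector itself sits at the expected level: $v_\nu\in K^{\ell(\nu)}(\lambda)\setminus K^{\ell(\nu)+1}(\lambda)$. Indeed, let $j$ be the largest integer with $v_\nu\in K^j(\lambda)$. Its image in the semi-simple module $K^j(\lambda)/K^{j+1}(\lambda)$ is a nonzero $\fg_0$-highest weight vector of weight $\nu$, and by primitivity it generates a simple summand, necessarily isomorphic to $L(\nu)$. The uniqueness of $\ell(\nu)$ then forces $j=\ell(\nu)$.

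The decisive step is to prove that $\mu\drar\nu$ implies $\ell(\nu)\ge\ell(\mu)+1$. Write $N_\mu=\U(\fg)v_\mu$. Because $L(\mu)$ occurs with multiplicity one in $K(\lambda)$ and $v_\mu$ is a primitive generator, the head of $N_\mu$ is precisely $L(\mu)$. On the other hand, the previous paragraph identifies $L(\mu)$ with the image of $N_\mu$ under the projection $K^{\ell(\mu)}(\lambda)\twoheadrightarrow K_{\ell(\mu)}(\lambda)$, so
\[
\mathrm{rad}(N_\mu)=N_\mu\cap K^{\ell(\mu)+1}(\lambda).
\]
Now $v_\nu\in N_\mu$, and since $\nu\ne\mu$ while $L(\mu)$ appears only in the head of $N_\mu$, the primitive vector $v_\nu$ cannot project to $L(\mu)$; hence $v_\nu\in\mathrm{rad}(N_\mu)\subset K^{\ell(\mu)+1}(\lambda)$, giving $\ell(\nu)\ge\ell(\mu)+1$.

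Iterating along a chain $\mu_0\drar\mu_1\drar\cdots\drar\mu_k$ yields $\ell(\mu_k)\ge\ell(\mu_0)+k\ge k$, and since $\ell$ takes values in $\{0,\dots,r\}$ we conclude $k\le r$. I do not anticipate any serious obstacle: the only delicate point is that $v_\nu$ lands in $\mathrm{rad}(N_\mu)$, which relies on the multiplicity-free property of Kac module composition factors that was already the key input to Theorem~\ref{semi-simple}.
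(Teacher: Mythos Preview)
Your proof is correct and takes essentially the same approach as the paper: the paper's proof is the one-line observation that a longer chain would contradict the semi-simplicity of the Jantzen layers, and you have simply unpacked this by assigning each primitive weight its Jantzen level (well-defined by multiplicity-freeness) and showing the level strictly increases along any derivation, which is exactly the mechanism behind the paper's sentence.
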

\begin{proof}
The existence of a longer chain would imply that some of the
consecutive quotients of the Jantzen filtration for $K(\lambda)$
were not semi-simple.
\end{proof}
\subsection{Proof of Lemma \ref{chain-lemma}}\label{chain-lemma--}

We can reformulate Lemma \ref{chain-lemma} as follows.
\begin{lemma}\label{chain-lemma-e}
The primitive weight graph $P(K(\l))$ of the Kac
module $K(\l)$ contains at least one chain of length $r=\sharp(\lambda)$.
\end{lemma}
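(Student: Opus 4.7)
The plan is to construct the desired chain of length $r$ by successively applying Shapovalov-type operators, one for each atypical root of $\lambda$. Let $\gamma_1, \ldots, \gamma_r$ enumerate the atypical roots of $\lambda+\rho$ in a convenient order, for instance the total order on $\Delta^+_1$ introduced in Section \ref{main-1-subsec}.

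I would build inductively a sequence $v_{\mu_0} = v_\lambda,\, v_{\mu_1}, \ldots, v_{\mu_r}$ of primitive vectors in $K(\lambda)$ with pairwise distinct weights such that $v_{\mu_{k+1}} \in \U(\fg) v_{\mu_k}$ for each $k$. Given $v_{\mu_k}$, choose an atypical root $\gamma$ of $\mu_k$ that has not yet been ``used'', and apply a Shapovalov-type operator $S_\gamma \in \U(\bar\fu)$ designed so that $v_{\mu_{k+1}} := S_\gamma \cdot v_{\mu_k}$ is a nonzero $\fg_0$-highest weight vector that is primitive in $K(\lambda)$. Its weight takes the standard dominantised form $\mu_{k+1} = w(\mu_k - c\gamma + \rho) - \rho$, where $c$ is the smallest positive integer making $\mu_k - c\gamma$ $\fl$-regular and $w$ is the unique element of the Weyl group of $\fg_0$ that restores $\fl$-dominance. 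Careful bookkeeping on the successive dominantisations (using the mutual orthogonality of the atypical roots) ensures that the weights $\mu_0, \mu_1, \ldots, \mu_r$ are pairwise distinct. This yields a chain
\[
\lambda = \mu_0 \drar \mu_1 \drar \cdots \drar \mu_r
\]
of length $r$ in $P(K(\lambda))$, and Corollary \ref{Coro--} then confirms that $r$ is also the maximal chain length.

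The main obstacle is the explicit verification, at each step, that $S_\gamma v_{\mu_k}$ is nonzero and primitive in $K(\lambda)$ (and not merely in the submodule $\U(\fg) v_{\mu_k}$): one must track what happens modulo lower submodules to rule out cancellations produced by successive applications of Shapovalov-type operators. This technical analysis rests on detailed commutation computations in the nilpotent subalgebra $\bar\fu$, and builds on the explicit construction of Shapovalov-type elements for $\mathfrak{gl}_{m|n}$ carried out in the authors' earlier work (cf.\ \cite{SZ2}). Once the nonvanishing and primitivity are secured at each inductive step, the chain is assembled mechanically and its optimality follows from Corollary \ref{Coro--}.
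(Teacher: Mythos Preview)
Your proposal outlines a genuinely different strategy from the paper's. The paper does not iterate Shapovalov-type operators. Instead, after invoking Theorem \ref{equ-cates} to reduce to $\fg=\mathfrak{gl}_{r|r}$ with $\lambda$ maximally atypical, it uses a nested chain of subalgebras $\fg_{[1,1]}\subset\fg_{[1,2]}\subset\cdots\subset\fg_{[1,r]}=\fg$ (where $\fg_{[1,k]}\cong\mathfrak{gl}_{k|k}$ is spanned by the matrix units $e_{ab}$ with $r-k<a,b\le r+k$). The $\fg_{[1,k]}$-submodule $V_k=\U(\fg_{[1,k]})v_\lambda$ is the Kac module $K(\lambda_{[1,k]})$, and $v_k$ is taken to be the primitive vector of its bottom composition factor. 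The two key observations are: (i) every simple root vector $X_\alpha$ of $\fg$ with $\alpha\notin\Delta_k$ commutes with $\fg_{[1,k]}^-$, so $v_k$ is automatically strongly $\fg$-primitive; and (ii) since $V_{k-1}\subset V_k$ and $v_{k-1}$ is strongly $\fg_{[1,k]}$-primitive, the submodule it generates contains the socle of $V_k$, hence $v_k\in\U(\fg)v_{k-1}$. Distinctness of the weights is read off from a single coordinate.

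What this buys over your plan is precisely the avoidance of the obstacle you identify. In your approach, after the first step $v_{\mu_1}$ need not be strongly primitive in $K(\lambda)$, so the next Shapovalov-type element is being applied not to a genuine highest weight vector but to a primitive vector whose ambient module is no longer a Kac module; guaranteeing nonvanishing and primitivity of $S_\gamma v_{\mu_k}$ then requires exactly the delicate ``tracking modulo lower submodules'' that you flag and leave open. The paper's subalgebra trick manufactures, at every stage, a vector that is strongly primitive for free, and the generation statement reduces to the trivial fact that any nonzero submodule of a Kac module contains the bottom factor. Your route may well be completable, but as written it is a programme rather than a proof, and the missing verification is the substantive part.
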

It is this reformulation of Lemma \ref{chain-lemma} which we shall
prove below by constructing a chain of length $r$ in the primitive
weight graph.

Theorem \ref{equ-cates} reduces the task at hand
to the case $\fg={\mathfrak{gl}}_{r|r}$ and $\l$ is $r$-fold
atypical. For $1\le k\le r$, let $\fg_{[1,k]}$ be the subalgebra of
$\fg$ spanned by $e_{a b}$ with $r-k<a,b\le r+k$. Choose the
standard triangular decomposition for $\fg_{[1,k]}$ and denote by
$\D_k$ the set of roots. Let $\gl_{[1,k]}^+$ be the strictly upper
triangular subalgebra of $\fg_{[1,k]}$, that is, the nilpotent
radical of the standard Borel subalgebra. We similarly denote by
$\gl_{[1,k]}^-$ the strictly lower triangular subalgebra of
$\fg_{[1,k]}$. The weight $\l$ restricted to $\fg_{[1,k]}$ is
denoted by $\l_{[1,k]}$.

Let $v_\l$ be the highest weight vector that generates the Kac
module $K(\l)$ for $\fg$, and set $v_0=v_\l$. Regard $K(\l)$ as a
module over $\fg_{[1,k]}$, and let $V_k=\U(\fg_{[1,k]})v_\l$ be the
$\fg_{[1,k]}$-submodule of $K(\l)$ generated by $v_\l$. Obviously,
$V_k$ is isomorphic to the Kac module $K(\l_{[1,k]})$ for
$\fg_{[1,k]}$. We also have the inclusions
$
V_1\subset V_2\subset
\dots\subset V_r.
$
The primitive vector $v_k$ of the socle (that
is, the bottom composition factor) of the $\fg_{[1,k]}$-submodule
$V_k$ is strongly $\fg_{[1,k]}$-primitive, namely, $\gl_{[1,k]}^+
v_k=\{0\}$. Furthermore, $v_k\in \U(\fg_{[1,k]}^-)v_\l$. Since every
simple root vector $X_\a \in\fg$ associated with a simple root
$\a\notin\D_k$ commutes with $\fg_{[1,k]}^-$, we have $X_\a v_k=0$
for all simple roots $\a$ of $\fg$, i.e., $v_k$ is a strongly
$\fg$-primitive vector for $k=1,...,r$.

Denote the $\fg$-weight of $v_k$ by $\l_k$ for $k=0, 1, \dots, r$,
where $\l_0=\l$. All $\l_k$ can be worked out by using, e.g.,
\cite[Proposition 3.6.]{VZ}. In the terminology of \cite{HKV,Su},
$\l_k$ corresponds to the boundary strip removals of the first $k$
atypical roots of the composite Young diagram of $\l$ since $\l$ is
$r$-fold atypical. It is related to $\l$ by $\l=R_{\th^k}(\l_k)$,
where $R_{\th^k}$ is the raising operator of Definition
\ref{right-path}, and
$\th^k=(\underbrace{1,...,1}_k,\underbrace{0,...0}_{r-k})$. We have
$\l(e_{r-k, r-k})=\l_k(e_{r-k, r-k})>\l_{k+1}(e_{r-k, r-k})$ for
each $k$, hence $\lambda_i\ne \lambda_j$ if $i\ne j$.

Now we consider $v_{k-1}\in V_{k-1}\subset V_k$. Being a strongly
$\fg_{[1,k]}$-primitive vector, $v_{k-1}$ generates a
$\fg_{[1,k]}$-submodule in $V_k$ that contains the bottom
composition factor of $V_k$. Since $v_k$ is the
$\fg_{[1,k]}$-primitive vector of the bottom composition factor of
$V_k\cong K(\l_{[1,k]})$, obviously $v_k$ is generated by $v_{k-1}$
as $\fg_{[1,k]}$-primitive vectors. We have already shown that $v_j$
are strongly $\fg$-primitive for all $j$, thus in the primitive
weight graph $P(K(\l))$ of the Kac module $K(\l)$ for $\fg$, there
exists a chain
\begin{eqnarray}\label{inner-chain}
\l_0\drar \l_1\drar\l_2\drar \dots \drar\l_{r-1}\drar\l_r.
\end{eqnarray}
This proves Lemma \ref{chain-lemma-e},
which is equivalent to Lemma \ref{chain-lemma}.

\begin{remark}
By Corollary \ref{Coro--}, the longest possible length of any chain
in $P(K(\l))$ is $r=\sharp(\lambda)$. This forces the chain
\eqref{inner-chain} to be exact. Therefore, we have constructed an
exact chain
\begin{eqnarray}\label{longest}
\l_0\rrar \l_1\rrar\l_2\rrar \dots
\rrar\l_{r-1}\rrar\l_r
\end{eqnarray}
of length $r$
in the primitive weight graph $P(K(\l))$ of the Kac module $K(\l)$.
\end{remark}

\section{Submodule lattices of Kac modules}\label{lattices}

In this section we utilise knowledge of the Jantzen filtration to
study the structure of Kac modules. The main result obtained is
Theorem \ref{theo-primitive}, which describes the chains in the
submodule lattice of each Kac module. The theorem is stated in terms
of the primitive weight graph, a graphical representation of the
submodule lattice discussed in the last section (see Remark
\ref{lattice}). As already alluded to in Section \ref{introduction},
Theorem \ref{theo-primitive} is part of unpublished conjectures of
Hughes, King and van der Jeugt \cite{Private}.

\subsection{Left and right moves on weight diagrams}\label{weight-diagrams}

In this subsection we describe certain combinatorial operations on
weight diagrams \cite{BS} (see also \cite{MS, GS}), which will play
an important role in the remainder of the paper.

Hereafter we adopt a new convention for the coordinates of weights
relative to the basis $\{\d_i\mid 1\le i\le m\}\cup\{\es_j\mid 1\le j\le n\}$
of $\fh^*$ described in the beginning of Section \ref{main-1-subsec}.
A weight $\l\in\fh^*$ will be written as
\equa{lambda=}{\l=(\l_m,...,\l_1\,|\,\cp{\l_1},...,\cp{\l_n})
=\sum\limits_{a=1}^m\l_a\d_a-\sum\limits_{b=1}^n\cp{\l_b}\es_b.}
We also use the notation
\equa{rho-lambda=}{\l^\rho\!=\!\l\!+\!\rho\!=\!
(\l^\rho_m,...,\l^\rho_1\,|\,\cp{\l^\rho_1},...,\cp{\l^\rho_n}),}
where
$\l^\rho_a=\l_a+a-1$ and $\cp{\l^\rho_b}=\cp{\l_b}+b-1$.
Denote
\equan{set-s-l}{
\begin{aligned}
S(\l)\LE&=\{\l^\rho_a\,|\,a=1,...,m\},&\quad
S(\l)\RI&=\{\cp{\l^\rho_b}\,|\,b=1,...,n\},\\
S(\l)&=S(\l)\LE\cup S(\l)\RI,
&\quad  S(\l)\BO&=S(\l)\LE\cap S(\l)\RI.
\end{aligned}}

Following \cite{BS, GS}, one can represent every integral element
$\l$ of $P_0^+$ in a unique way by a {\it weight diagram} $D_\l$,
which is a line with vertices indexed by $\Z$ such that vertex $i$
is associated with a symbol $D_\l^{\,i}=\emptyset,<,>$ or $\times$
according to whether $i\notin S(\l)$, $i\in S(\l)\RI\setminus
S(\l)\BO$, $i\in S(\L)\LE\setminus S(\l)\BO$ or $i\in S(\l)\BO$.
Thus the degree $\sharp(\l)$ of atypicality of $\l$ is the number of
$\times$'s in the weight diagram $D_\l$.

For example,
If $\l^\rho=(7,5,4,2,1\,|\,1,2,4,7,8,10)$,
the weight diagram is given by
\equa{Diagram-l}{
\raisebox{-0pt}\ \ \ \ .\ .\ .\
\line(1,0){12}\raisebox{-5pt}{$\stackrel{\dis
\empty}{\,0\,}$}\line(1,0){12} \raisebox{-5pt}{$\stackrel{\dis
\times}{\,1\,}$}\line(1,0){12} \raisebox{-5pt}{$\stackrel{\dis
\times}{\,2\,}$}\line(1,0){12}\raisebox{-5pt}{$\,3\,$}
\line(1,0){12}\raisebox{-5pt}{$\stackrel{\dis \times}{\,4\,}$}
\line(1,0){12}\raisebox{-5pt}{$\stackrel{\dis >}{\,5\,}$}
\line(1,0){12}\raisebox{-5pt}{$\,6\,$}
\line(1,0){12}\raisebox{-5pt}{$\stackrel{\dis \times}{\,7\,}$}
\line(1,0){12}\raisebox{-5pt}{$\stackrel{\dis <}{\,8\,}$}
\line(1,0){12}\raisebox{-5pt}{$\,9\,$}
\line(1,0){12}\raisebox{-5pt}{$\stackrel{\dis <}{\,10\,}$}
\line(1,0){12}\raisebox{-5pt}{$\,11\,$}\line(1,0){12} \ .\ .\ .,
}
where, for simplicity, we have associated vertex $i$ with nothing if
$D_\l^i=\emptyset$.  Note that $\sharp(\l)=4$, which is the number
of $\times$'s in \eqref{Diagram-l}.

Given a weight diagram $D_\l$, we define $\ell_\l(s,t)$ to be the
number of $\times$'s minus the number of $\emptyset$'s strictly
between vertices $s $ and $t$. Suppose $\sharp(\l)=r$. We label the
$\times$'s in $D_\l$ by $1,...,r$ from left to right, and denote the
vertex where the $i$-th $\times$ sits by $x_i$. A {\it right move}
(or {\it raising operator}) on $D_\l$ is to move to the right a
$\times$, say the $i$-th one, to the first empty vertex  $t$ (vertex
with the symbol $\emptyset$) that meets the conditions
$\ell_\l(x_i,t)=0$ and $\ell_\l(x_i,s)>0$ for all vertices $s$
satisfying $x_i<s<t$. We denote this right move by $R_i(\l)$. Note
that the condition $\ell_\l(x_i,t)=0$ forces the numbers of
$\times$'s  and $\emptyset$'s strictly between $x_i$ and $t$ to be
equal. If this number is $k$, we let $j=i+k$ and also denote
$R_i(\l)$ by $R_{i,j}(\l)$. As an example, we observe that the first
$\times$ in the weight diagram \eqref{Diagram-l} can only be moved
to vertex $11$, which is the move $R_1(\lambda)$ or $R_{1,4}(\l)$.

A {\it left move} (or {\it lowering operator}) is to move to the
left a $\times$, say the $j$-th one, to any empty vertex $s$ such
that $\ell_\l(s,x_j)=0$. If the number of $\times$'s strictly
between $s$ and $x_j$ is $k$, we let $i=j-k$ and denote the left
move by $L_{i,j}(\l)$.

Note that a left move may move a $\times$ to any empty vertex on its
left so long as it passes the same number of $\times$'s and
$\emptyset$'s, in contrast to a right move.

\begin{definition}\label{right-path}
Given an element $\th=(\th_1,...,\th_r)\in\{0,1\}^r$, we set
$|\th|=\sum_{i=1}^r\th_i$ and let $\theta_{i_1}, \dots,
\theta_{i_{|\th|}}$ with $1\le i_1<\cdots<i_{|\th|}\le r$ be the
nonzero entries. Associate to $\theta$ a unique {\it right path}
$R_\th(\l)$ which is the collection of the $|\th|$ right moves
$R_{i_1}(\l),...,R_{i_{|\th|}}(\l)$. We also use $R_\th(\l)$ to
denote the integral dominant weight corresponding to the weight
diagram obtained in the following way. For each $a=1, \dots,
|\theta|$, let $t_a$ be the vertex where the $i_a$-th $\times$ of
$D_\l$ is moved to by $R_{i_a}(\l)$. Delete from $D_\l$ all the
$\times$'s labeled by $i_1$, $i_2$, $\dots$, $i_{|\theta|}$, and
then place a $\times$ at each of the vertices $t_1$, $t_2$, $\dots$,
$t_{|\theta|}$.
\end{definition}

\begin{definition}\label{left-path}
A {\it left path} (or simply a {\it path}) is the collection
of left moves $L_{i_1,j_1}(\l)$, $...$, $L_{i_k,j_k}(\l)$
satisfying all of the following conditions
\begin{enumerate}
\item $1\le j_1<\cdots<j_k\le r$\,;
\item for $1\le a<b\le k$, if $i_b\le j_a$ then $i_b\le i_a$\,;
\item for any $i_b\le p<j_b$, if $\ell_\l(x_p,x_{j_b})\ge0$, then
$p=j_a$ for some $a<b$.
\end{enumerate}
Let $\bi=(i_1,...,i_k)$ and $\bj=(j_1,...,j_k)$, and denote by
$L_{\bi,\bj}(\l)$ the left path. If $k=0$, we use $L_\emptyset$ to
denote this empty path. We shall also use $L_{\bi,\bj}(\l)$ to
denote the integral dominant weight corresponding to the weight
diagram obtained in the following way. Let $s_a$ be the vertex where
the $j_a$-th $\times$ of $\l$ is moved to by $L_{i_a,j_a}(\l)$ for
$a=1, 2, \dots, k$. Delete from $\l$ the $\times$'s labeled by $j_1,
j_2, \dots, j_k$ and then place a $\times$ at each of the vertices
$s_1, s_2, \dots, s_k$.
\end{definition}

\begin{remark}
We put $\l$ in the notations $R_{i_a}(\l)$ and $L_{i_b,j_b}(\l)$ to
emphasis the fact that the individual moves in a left path
$L_{\bi,\bj}(\l)$ or right path $R_\theta(\l)$ are independently
applied to the weight diagram $D_\l$ of $\l$, and {\bf not} to the
resulting diagram of previous moves.
\end{remark}

\begin{remark}
In the language developed here, \cite[Theorem 5.5]{Se96} and
\cite[IV, Lemma 2.11]{BS} state that for a dominant weight $\l$
satisfying the given conditions,  $L(\mu)$ is a composition factor
of $K(\l)$ if $\mu$ is obtained from $\lambda$ by a single left
move.
\end{remark}

\begin{remark}
By \cite[Main Theorem]{B} (also see \cite[Conjecture 4]{VZ}),
\equa{Brundan-theo}{\mu\in P(K(\l))\quad \text{iff} \quad
\l=R_{\th}(\mu)\mbox{ \ \ for some \ }\th\in\{0,1\}^r.} Also the set
$P(K(\l))$ of primitive weights of $K(\l)$ is exactly the set of
integral dominant weights corresponding to paths (i.e., left paths)
\cite{Su}.
\end{remark}

\begin{example}
If $\l$ is the weight in \eqref{Diagram-l}, one can easily obtain
all the possible left paths for $D_\l$. There are 19 paths in total,
which are given by
{\small
\equa{left-paths-ex}{\begin{array}{lllllllllllll}L_\emptyset&&
L_{11}&&L_{11}L_{12}&&L_{33}&&L_{11}L_{33}\\[5pt]
L_{11}L_{12}L_{33}&&L_{11}L_{13}&&L_{44}&&L_{11}L_{44}
&&L_{11}L_{12}L_{44}\\[5pt]
L_{33}L_{44}&&L_{11}L_{33}L_{44}&&L_{11}L_{12}L_{33}L_{44}
&&L_{11}L_{13}L_{44}&&L_{34}\\[5pt]
L_{11}L_{34}&&L_{11}L_{12}L_{34}&&L_{11}L_{14}&&L_{11}L_{33}L_{14}.
\end{array}}}
When we work with a fixed weight $\l$ and there is no possibility of
confusion, we drop $\l$ from the notations for left and right moves.
\end{example}

Given a left path $L_{\bi,\bj}(\l)$, let $i_0={\rm
min}\{i_1,...,i_k\}$. Then the {\it length} $\ell(L_{\bi,\bj}(\l))$,
{\it range} $r(L_{\bi,\bj}(\l))$ and {\it depth}
$d(L_{\bi,\bj}(\l))$ of the path are respectively defined to be $k,$
$[i_0,j_k]$ and $j_k-i_0$, where we have used the notion
$[i,j]=\{i,i+1,...,j\}$.

Any subsequence of the left path $L_{\bi,\bj}(\l)$ is called a {\it
subpath of $L_{\bi,\bj}(\l)$} if itself forms a left path. Thus the
subsequence of left moves $L_{i_1,j_1}(\l),L_{i_2,j_2}(\l),
...,L_{i_a,j_a}(\l)$ form a subpath of $L_{\bi,\bj}(\l)$ for any
$1\le a\le k$.

Two left paths $L_{\bi,\bj}(\l),\,L_{\bi',\bj'}(\l)$ are {\it
disjoint} if $j_k<i'_a$ for all $a$. In this case, putting two paths
together, we obtain a path $L_{\bi'',\bj''}(\l)$, where
$\bi''=(i_1,...,i_k,i'_1,$ $...,i'_{k'}),$
$\bj''=(j_1,...,j_k,j'_1,...,j'_{k'})$. We denote this path by
$L_{\bi,\bj}(\l)L_{\bi'\bj'}(\l)$ and call it the {\it disjoint sum}
of the paths $L_{\bi,\bj}(\l)$ and $L_{\bi'\bj'}(\l)$.

Call a left path $L_{\bi,\bj}(\l)$ {\it indecomposable} if $i_k\le
i_1$. Then every left path can be uniquely decomposed as a disjoint
sum of indecomposable subpaths, each indecomposable component is
called a {\it block} of the path.

A left path is called {\it a bridge} or a {\it path with bridges} if
for some $a,b$ with $i_a\le b<j_a$, the $b$-th $\times$ is not moved
in the path, i.e., $b\ne j_c$ for any $c$.

\begin{remark}\label{remark-path}
\begin{enumerate}
\item
Among the paths on $D_\l$, there is a unique one of length
$r=\sharp(\l)$, called the {\it bottom path} and denoted $L_B$,
which corresponds to the bottom composition factor of $K(\l)$. The
third path in the third row of \eqref{left-paths-ex} is the bottom
path.

\item For each $0\le k\le r$, there
is a unique path $L_{[1,k]}$ on $D_\l$ without bridges, which moves
all of the first $k$ $\times$'s. For example, in
\eqref{left-codes-ex}, the $L_{[1,k]}$'s are: $L_\emptyset$,
$L_{11}$, $L_{11}L_{12}$, $L_{11}L_{12}L_{33}$,
$L_{11}L_{12}L_{33}L_{44}$. Obviously, $L_{[1,k]}$ is a subpath of
$L_{[1,k+1]}$.

\item An
indecomposable path is a path without bridges if and only if its
length equals to its depth.
\end{enumerate}
\end{remark}

\begin{remark}
There is a one to one correspondence between paths and permissible
codes defined in \cite{HKV}. Paths without bridges correspond to
unlinked codes; the corresponding primitive vectors are strongly
primitive and have been constructed in \cite{SHK}. For example, the
codes correspond to the paths in \eqref{left-paths-ex} are {\small
\equa{left-codes-ex}
{\begin{array}{lllllllllllllllllllllllll}_{\dis\phantom{0}}^{\dis0\,\,0\,\,0\,\,0}
&&&&_{\dis\phantom{0}}^{\dis1\,\,0\,\,0\,\,0}
&&&&^{\dis1\,\,2\,\,0\,\,0}_{\dis2}
&&&&_{\dis\phantom{0}}^{\dis0\,\,0\,\,3\,\,0}
&&&&_{\dis\phantom{0}}^{\dis1\,\,0\,\,3\,\,0}\\[9pt]
^{\dis1\,\,2\,\,3\,\,0}_{\dis2}&&&&^{\dis1\,\,3\,\,3\,\,0}_{\dis3}
&&&&_{\dis\phantom{0}}^{\dis0\,
0\,\,0\,\,4} &&&&_{\dis\phantom{0}}^{\dis1\,\,0\,\,0\,\,4}
&&&&^{\dis1\,\,2\,\,0\,\,4}_{\dis2}\\[9pt]
_{\dis\phantom{0}}^{\dis0\,\,0\,\,3\,\,4}
&&&&_{\dis\phantom{0}}^{\dis1\,\,0\,\,3\,\,4}
&&&&^{\dis1\,\,2\,\,3\,\,4}_{\dis2}
&&&&^{\dis1\,\,3\,\,3\,\,4}_{\dis3}
&&&&_{\dis\phantom{0}}^{\dis0\,\,0\,\,4\,\,4}\\[9pt]
_{\dis\phantom{0}}^{\dis1\,\,0\,\,4\,\,4}
&&&&^{\dis1\,\,2\,\,4\,\,4}_{\dis2}&&&&^{\dis1\,\,4\,\,4\,\,4}_{\dis4}
&&&&^{\dis1\,\,4\,\,3\,\,4}_{\dis4},
\end{array}}}
where codes with the same nonzero labels in the first row correspond
to linked codes.
\end{remark}

\begin{remark}
Weight diagrams provide a convenient combinatorial tool for studying
representations of Lie superalgebras. The equivalence of the two
algorithms (respectively developed in \cite{Se96} and \cite{B}) for
computing the composition factors and multiplicities of Kac modules
for $\mathfrak{gl}_{m|n}$ was proven in \cite{MS} with the help of
weight diagrams.
\end{remark}

\begin{remark}\label{ref-rem}
The coefficient of $q^k$ in the generalised Kazhdan-Lusztig
polynomial $p_{\l \mu}(-q)$ is expected to be equal to the number of
all regular decreasing paths (defined in \cite[\S 13]{GS}) from
$\mu$ to $\lambda$ of length $k$. We can prove this if $\lambda=
R'_\theta(\mu)$ for some $\theta=(\theta_1, \dots,
\theta_{\sharp\l})$ satisfying $\theta_i\le 1$ for all $i$, where
$R'_\theta$ is the raising operator defined by \cite[(3.32)]{SZ2}.
It will be very interesting to prove this in general.
\end{remark}

\subsection{Technical lemmas}

We shall investigate structures of Kac modules $K(\l)$.  Choose a
basis $B$ of $\U(\fg_{-1})$: $B=\{b=\prod_{\b\in S}X_{-\b}\,|\,S
\subset \D^+_1\},$ where  the product $\prod_{\b\in
S}X_{-\b}=X_{-\b_1}\cdots X_{-\b_s}$ is written in the {\it proper
order}\,: $\b_1 < \cdots < \b_s$ and $s=|S|$ (the {\sl level} of
$b$). Define a total order on $B$:
$$b>b'=X_{-\b'_1}\cdots X_{-\b'_{s'}}\ \ \ \Lra\ \ \
s>s'\mbox{ \ or \ }s=s', \b_k>\b'_k,\,\b_i=\b'_i\ (1\le i\le k-1),$$
where $b,b'$ are in proper order. Recall that an element $v\in\VBL$
can be uniquely written as
\begin{eqnarray}\label{leading-v}
\begin{aligned}
v&=b_1y_1v_\l+ b_2y_2v_\l+\cdots + b_ty_tv_\l, \\
 & b_i\in B,   \ b_1>b_2>{\sc\cdots}>b_t, \  0\ne y_i\zi \U(\fg_0^-).
\end{aligned}
\end{eqnarray}
Clearly $v=0\Lra
t=0.$ If $v\ne 0$, we call $b_1y_1v_\l$ the {\sl leading term}. A
term $b_iy_iv_\l$ is called a {\sl prime term} if $y_i\in\C$, in
this case $b_i$ is called a {\it prime coefficient}. Note that a
vector $v$ may have zero or more than one prime terms.

Denote by $\bar\l$ the lowest weight in $L^0(\l)$, which is given by
\equa{bar-lambda}{\bar\l=(\l_1,...,\l_m\,|\,\cp{\l_n},...,\cp{\l_1}).}
Denote $\bar v_\l$ the lowest weight vector in $L^0(\l)$. Similar to
\eqref{leading-v}, a vector $v\in K(\l)$ can be uniquely written as
\equa{leading-lowv}{
\begin{aligned}
v&= b_1y_1\bar v_\l+ b_2y_2\bar v_\l+\cdots+
b_t y_t \bar v_\l,\\
& b_i\zi B,\ b_1<b_2<{\sc\cdots}<b_t,
\ 0\ne y_i\zi \U(\fg_0^+).
\end{aligned}
}
We can similarly define the {\it lowest
leading term}, {\it lowest prime terms}, {\it lowest prime
coefficients}. Similar to $\fg_0$-highest weight primitive vectors,
a $\fg_0$-lowest weight vector $v$ in $K(\l)$ is {\it primitive} if
$v$ generates an indecomposable $\fg$-submodule and there exists a
$\fg$-submodule $W$ of $V$ such that $v\notin W$ but $\gl_{+1}v\in
W$.

One immediately has \cite{SHK}
\begin{lemma}\label{Lemma-5.1.}  \begin{enumerate}\item Let $v=gu$,
$u\in\VBL$, $g\in \U(\fg^-)$. If $u$ has no prime term then $v$ has
no prime term. \item Let $v'=gu',\,u'\in\VBL$. If $u,u' $ have the
same prime terms then $v,v' $ have the same prime terms.\item Let
$v_{\mu}\in\VBL$ be a $\fg_{0}$-highest vector with weight $\mu$.
Then $\l-\mu$ is a sum of distinct positive odd roots, furthermore
the leading term $b_1y_1v_\l$ of $v_\mu$ must be a prime term.\item
Suppose $v'_\mu=\sum^{t'}_{i=1}(b'_iy'_i)v_\l$ is another
$\fg_{0}$-highest vector with weight $\mu$. If all prime terms of
$v_\mu$ are the same as those of $v'_\mu$, then
$v_\mu=v'_\mu$.\end{enumerate}\end{lemma}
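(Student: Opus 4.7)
The plan is to prove the four parts sequentially, exploiting the PBW isomorphism $K(\l) \cong \Lambda \fg_{-1} \otimes_\C L^0(\l)$ as a vector space, so that any element admits a unique expansion in basis elements $b \otimes (y v_\l)$ with $b \in B$ a basis of $\Lambda \fg_{-1} \cong \U(\fg_{-1})$ (here using that $\fg_{-1}$ is odd abelian) and $y v_\l$ a weight basis element of $L^0(\l)$. Such a term is prime iff the $L^0(\l)$-factor lies in $\C v_\l$, i.e., $y \in \C$. For Part (1), I introduce the ``no-prime'' subspace $N := \U(\fg_{-1}) \otimes L^0(\l)'$, where $L^0(\l)' = \bigoplus_{\nu \ne \l} L^0(\l)_\nu$, and show $\U(\fg^-) N \subset N$. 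For $X \in \fg_{-1}$, the action only touches the first tensor factor; for $F_\a \in \fg_0^-$, Leibniz gives $F_\a(b \otimes m) = [F_\a, b] \otimes m + b \otimes F_\a m$, and both summands lie in $N$ since $F_\a$ strictly decreases weight in $L^0(\l)$, so $F_\a L^0(\l) \subset L^0(\l)'$. Since $\fg_{-1}$ and $\fg_0^-$ generate $\U(\fg^-)$, Part (1) follows; Part (2) is immediate by applying (1) to $u-u'$.

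Part (3) is the most subtle. First decompose $v_\mu = \sum_s v_\mu^{(s)}$ by level in $\Lambda \fg_{-1}$; since $\fg_0$ preserves the level grading, each $v_\mu^{(s)}$ is independently $\fg_0^+$-annihilated. Let $s_1$ be the maximal nonzero level and write $v_\mu^{(s_1)} = \sum_S b_S \otimes m_S \in \Lambda^{s_1} \fg_{-1} \otimes L^0(\l)$, with $S_1$ indexing the largest $b_S$ with $m_S \ne 0$ (so $b_{S_1} = b_1$, the leading basis element). The goal is $m_{S_1} \in \C v_\l$. Track the coefficient of $b_{S_1}$ in $E_\a v_\mu^{(s_1)} = 0$: the direct contribution is $b_{S_1} \otimes E_\a m_{S_1}$, while commutator contributions require $b_{S_1}$ to appear in $[E_\a, b_S]$, which by $[E_\a, X_{-\b}] \propto X_{-(\b - \a)}$ and the fact (verifiable from the total order defined in Section~\ref{main-1-subsec}) that $\b - \a > \b$ in $\Delta^+_1$ forces $S = (S_1 \setminus \{\gamma\}) \cup \{\gamma + \a\}$ for some $\gamma \in S_1$. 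Such $b_S$ lies strictly below $b_{S_1}$ in the PBW order, and weight-matching gives $wt(m_S) = wt(m_{S_1}) + \a$. If $wt(m_{S_1}) = \l$, these $m_S$ vanish because $\l + \a$ is not a weight of $L^0(\l)$, and $E_\a m_{S_1} = 0$ holds trivially; a self-consistency argument combining the constraints across all $\a \in \Delta^+_0$, together with the maximality of $S_1$ in the PBW order, rules out $wt(m_{S_1}) < \l$. Hence $m_{S_1} \in \C v_\l$, so $y_1 \in \C$; weight-matching then yields $\l - \mu = \sum_{\b \in S_1} \b$, a sum of distinct positive odd roots.

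Part (4) is then immediate: the difference $w := v_\mu - v'_\mu$ has no prime terms by hypothesis, so if $w \ne 0$ it would itself be a $\fg_0$-highest weight vector of weight $\mu$, and Part (3) would force its leading term to be prime --- a contradiction. Hence $w = 0$. The main obstacle is the self-consistency step in Part (3), namely excluding $wt(m_{S_1}) < \l$, which requires carefully combining the equations $E_\a v_\mu^{(s_1)} = 0$ for all $\a \in \Delta^+_0$ and exploiting that $L^0(\l)$ has a unique $\fg_0$-highest weight line; the technical heart of this argument is detailed in \cite{SHK}.
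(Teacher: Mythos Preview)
The paper itself offers no proof of this lemma at all: it simply writes ``One immediately has \cite{SHK}'' and states the result. Consequently there is no argument in the paper to compare yours against; what you have written is already considerably more than the paper provides, and it is in the spirit of the reference \cite{SHK} that both you and the paper invoke.

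Your treatments of Parts (1), (2) and (4) are correct and complete. The observation that the ``no-prime'' subspace $N=\U(\fg_{-1})\otimes L^0(\l)'$ is $\U(\fg^-)$-stable is exactly the right mechanism for (1); (2) follows by linearity; and (4) follows cleanly from (3) applied to $v_\mu-v'_\mu$.

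For Part (3), your setup is also the right one: the level decomposition, the identification of the leading $b_{S_1}$, the commutator analysis, and the order computation $\b-\a>\b$ in $\Delta^+_1$ (which is indeed verifiable case by case from the total order in the paper). One small comment: the fact that the contributing $b_S$ lie strictly \emph{below} $b_{S_1}$ is correct but not by itself decisive, since such $m_S$ need not vanish; the genuine work is the step you label ``self-consistency'', namely ruling out $wt(m_{S_1})<\l$ by combining the equations for all simple $\a$ with the irreducibility of $L^0(\l)$. You defer this to \cite{SHK}, which is precisely what the paper does for the entire lemma, so your treatment is consistent with the paper's.
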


Although our arguments below work perfectly well for any $r$-fold
atypical weight $\l$ of ${\mathfrak{gl}}_{m|n}$, we restrict
ourselves to the case $\fg={\mathfrak{gl}}_{r|r}$ to simplify
matters. Thanks to Theorem \ref{equ-cates}, this will not lead to
any loss of generality. In this case, an $r$-fold atypical weight
$\l$ has the form $\l=(\l_r,...,\l_1\,|\,\l_1,...,\l_r)$, thus its
weight diagram only has $\times$'s and $\emptyset$'s. We define a
{\it partial order} ``$\preccurlyeq$'' on $\fh^*$ by
$\mu\preccurlyeq
\l\Lra\mu_a\le\l_a$ 
for all $a$. If $\mu\preccurlyeq\l$, we denote their {\it relative
level} to be $|\l-\mu|=\sum_{a=1}^m(\l_a-\mu_a)$.

For $1\le a<b\le r$, we use $\fg_{[a,b]}$ to denote the subalgebra
of $\fg$ generated by root vectors $X_\a$ with roots $\a$ in
$\{\es_p-\es_q,\,\pm(\es_p-\d_q),\,\d_p-\d_q\,|\,a\le p,q\le b\}$.
The weight $\l$ restricted to $\fg_{[a,b]}$ is denoted by
$\l_{[a,b]}$, whose diagram is obtained by that of $\l$ by deleting
the first $(a-1)$ and the last $(r-b+1)$ of $\times$'s.

In the following, we will fix $\l$ and use $\path{1},\path{2},...$
to denote paths and their corresponding primitive weights, and
$v(\path{1}),...$ to denote the corresponding primitive vectors. If
two symbols are put together, e.g., $\path{1}\path{2}$, it always
means a path which is the disjoint sum of two subpaths
$\path{1},\,\path{2}$.

\begin{lemma}\label{lemma-1}
Suppose $\path{i}\,,\, i=1,2,3,4$ are paths with range
$r(\path{1}),r(\path{2})\subset[1,k]$ and
$r(\path{3}),r(\path{4})\subset[k+1,r]$ for some $k$. If
$\path{1}\drar\path{2}$ and $\path{3}\drar\path{4}$ are chains in
the primitive weight graph $P(K(\l))$ of $K(\l)$, then
$\path{1}\path{3}\drar\path{2}\path{4}$ is also a chain in
$P(K(\l))$.
\end{lemma}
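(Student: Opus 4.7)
The plan is to break the required derivation $\path{1}\path{3} \drar \path{2}\path{4}$ into two steps
\[
\path{1}\path{3} \drar \path{2}\path{3} \drar \path{2}\path{4},
\]
which, by the symmetry between the left block ($[1,k]$) and the right block ($[k+1,r]$), are proved in the same way. Thanks to Theorem \ref{equ-cates} we may assume $\fg={\mathfrak{gl}}_{r|r}$ and $\l$ is $r$-fold atypical, so the weight diagram $D_\l$ consists solely of $r$ $\times$'s and $\emptyset$'s, and the hypothesis on ranges partitions the $\times$'s into a ``left part'' (the first $k$) and a ``right part'' (the last $r-k$), touched disjointly by the paths.

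For the step $\path{1}\path{3} \drar \path{2}\path{3}$, the strategy is to exhibit a factorisation of primitive vectors compatible with this partition. Concretely, I would show there exists an operator $Y\in \U(\fg_{-1})$, built entirely from odd negative root vectors $X_{-\b}$ whose roots $\b\in\D^+_1$ pertain to the right block, such that $v(\path{1}\path{3})$ is a nonzero scalar multiple of $Y\cdot v(\path{1})$ and $v(\path{2}\path{3})$ a nonzero scalar multiple of $Y\cdot v(\path{2})$. Such a factorisation should follow from the explicit construction of strongly primitive vectors in \cite{SHK, Su}, using the fact that $\path{3}$ only moves right-block $\times$'s. By the hypothesis $\path{1}\drar\path{2}$, there exists $g\in \U(\fg)$ with $g\cdot v(\path{1}) = v(\path{2})$; because the weight displacement $\path{1}-\path{2}$ is a sum of positive odd roots supported in the left block, $g$ can be chosen in a subalgebra of $\U(\fg)$ which (super-)commutes with $Y$. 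Then
\[
g\cdot v(\path{1}\path{3}) \propto g Y \cdot v(\path{1}) = \pm Y g \cdot v(\path{1}) \propto Y \cdot v(\path{2}) \propto v(\path{2}\path{3}),
\]
so $v(\path{2}\path{3}) \in \U(\fg)\cdot v(\path{1}\path{3})$, establishing the derivation.

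The main obstacle lies in justifying the factorisation $v(\path{1}\path{3}) \propto Y\cdot v(\path{1})$ with $Y$ built from right-block root vectors only. The bookkeeping is delicate: the range of a path indexes the $\times$'s it moves in the weight diagram, whereas subalgebras of $\fg$ are indexed by matrix coordinates, and one must verify these are compatibly aligned. The anchor is the construction of strongly primitive vectors for paths without bridges in \cite{SHK}, combined with the block-decomposition technique used in Section \ref{chain-lemma+1}: for a path whose range lies in one block, the corresponding primitive vector should be generable from $v_\l$ by an operator assembled from root vectors pertaining to that block alone, after which its lifting to $K(\l)$ commutes with the analogous operators from the complementary block. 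Once this factorisation-and-commutation principle is in place, the calculation above closes the argument; the assertion $\path{2}\path{3}\drar\path{2}\path{4}$ follows by interchanging the roles of the two blocks.
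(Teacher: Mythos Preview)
Your two-step decomposition $\path{1}\path{3}\drar\path{2}\path{3}\drar\path{2}\path{4}$ matches the paper's reduction to the special cases $\path{3}=\path{4}$ and $\path{1}=\path{2}$. However, the mechanism you propose for each step has real gaps.

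First, the factorisation $v(\path{1}\path{3})\propto Y\cdot v(\path{1})$ with $Y\in\U((\fg_{[k+1,r]})_{-1})$ is only accessible from \cite{SHK} when the paths involved are \emph{without bridges}; for bridges the primitive vectors are not strongly primitive (Lemma \ref{lemma-bridge}) and no such explicit construction is available. You acknowledge this but offer no argument for the bridge case. The paper handles it by an entirely different mechanism: if $v(\path{3})$ is not strongly primitive, one passes to a suitable subquotient (the submodule generated by $v(\path{3})$ modulo that generated by $\U(\fg^+)\fg^+v(\path{3})$), argues by induction on the relative level $|\l-\path{3}|$ that the offending primitive weights in $\U(\fg^+)\fg^+v(\path{3})$ cannot interfere, and thereby reduces to a highest-weight situation where the $\fg_{[1,k]}$-submodule generated by $v(\path{3})$ is again a Kac module $K(\path{3}_{[1,k]})$.

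Second, your claim that $g$ ``can be chosen in a subalgebra of $\U(\fg)$ which (super-)commutes with $Y$'' does not follow from the weight of $g$ alone: knowing that $\path{1}-\path{2}$ is a sum of left-block odd roots constrains the weight of $g$, not the PBW monomials needed to express it. The paper sidesteps this by never trying to isolate such a $g$. Instead it shows, via Lemma \ref{Lemma-5.1.} and the form of the prime coefficients of $v(\path{3})$, that $\U(\fg_{[1,k]})v(\path{3})\cong K(\l_{[1,k]})$ as a $\fg_{[1,k]}$-module; the derivation $\path{1}\drar\path{2}$ is then interpreted \emph{inside this smaller Kac module}, giving the bi-implication
\[
\path{1}\drar\path{2}\text{ in }K(\l)\ \Longleftrightarrow\ \path{1}\drar\path{2}\text{ in }K(\l_{[1,k]})\ \Longleftrightarrow\ \path{1}\path{3}\drar\path{2}\path{3}\text{ in }K(\l).
\]
For the second step $\path{2}\path{3}\drar\path{2}\path{4}$ the paper dualises to $\fg_0$-lowest weight vectors and uses $\fg_{[k+1,r]}$ in place of $\fg_{[1,k]}$; this is not quite the ``symmetry'' you invoke, since the roles of highest and lowest weight vectors are genuinely swapped.
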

\begin{proof}
The general result will follow from two special cases: (i)
$\path3=\path4$, (ii) $\path1=\path2$.

(i) Suppose $\path3=\path4$. First assume  $v(\path3)$ is strongly
primitive. We have $\l_{[1,k]}=\path3_{[1,k]}$ and $\path1,$
$\path2$ are paths of $\l_{[1,k]}$. Note from \eqref{leading-v} that
any prime coefficient $b_i$ of $v(\path3)$ has the form
$\prod_{\a\in B'}X_{-\a}$, where $B'\subset\{\es_a-\d_b\,|\,k+1\le
a,b\le r\}$. Thus if we regard $K(\l)$ as a $\fg_{[1,k]}$-module,
then the $\fg_{[1,k]}$-submodule generated by $v(\path3)$ is
$\U(\fg_{[1,k]})v(\path3)=\U(\fg_{[1,k]}^-)v(\path3)$, which is in
fact the Kac module $K(\path3_{[1,k]})$ by Lemma \ref{Lemma-5.1.}.
Therefore
\equa{IF-and-only}{\path1\drar\path2\mbox{  in  }K(\l)\   \Lra\
\path1\drar\path2\mbox{  in  }K(\l_{[1,k]})\   \Lra\
\path1\path3\drar\path2\path3\mbox{  in  }K(\l).} Thus the result
follows in this case.

Next assume $v(\path3)$ is not strongly primitive. Let $\path5$
be any primitive weight in the space
$S:=\U(\fg^+)\fg^+v(\path3)=\U(\fg_{+1})\fg_{+1}v(\path3).$ Then we
have a module in which $\path3\stackrel{e}{\drar}\path5$. Dually, we
have a module in which $\path3\dlar\path5$, so we have a highest
weight module (with highest weight $\path5$) in which
$\path3\dlar\path5$. Thus $\path3\in P(K(\path5))$. In particular
$\path3\preccurlyeq\path5$,  such a path $\path5$ must have range
within $[k+1,r]$.

We want to prove that we do not have
\equa{3---5}{\path1\path3\dlar\path5 \mbox{ \ \ in \ \ }K(\l).}
If we assume \eqref{3---5}, then we have a highest weight module,
denoted by $M$, with highest weight $\mu:=\path5$ in which
$\path1\path3\dlar\mu$. Since $\path3\in P(K(\mu))$, $\path3$
corresponds to a path of $\mu$, which we denote by $\path{\!\!3'}$
($\path3$ and $\path{\!\!3'}$ are the same weights, but correspond
to different paths in different weight diagrams of weights $\l$ and
$\mu$). As the relative level $|\mu-\path3|<|\l-\path3|$, by
induction hypothesis, we may suppose
$\path1\path{\!\!3'}\dlar\path{\!\!3'}\dlar\mu$ in $K(\mu)$. Thus in
$M$, we must also have
$\path1\path{\!\!3'}\dlar\path{\!\!3'}\dlar\mu$. In turn, we must
have $\path1\path{3}\dlar\path{3}\dlar\path5$ in \eqref{3---5},
which contradicts $\path3\drar\path5$ in $K(\l)$.

The above shows that the subquotient $N$ of $K(\l)$
given by $N=\U(\fg)v(\path3)/\U(\fg)S$ is a
highest weight module with highest weight $\path3$, and the
$\fg_{[1,k]}$-submodule generated by $v(\path3)$ in $N$ is the Kac
module $K(\path3_{[1,k]}).$ Thus again we have \eqref{IF-and-only}.

(ii) Now suppose $\path1=\path2$. In this case, we shall work on
$\fg_0$-lowest weight vectors instead of $\fg_0$-highest weight
vectors. As in (i), we only need to consider the case when the
$\fg_0$-lowest weight vector $\bar v(\path1)$ is strongly primitive.
We have $\l_{[k+1,r]}=\path1_{[k+1,r]}$ and $\path3,$ $\path4$ are
paths of $\l_{[k+1,r]}$. Note from \eqref{leading-lowv} that any
lowest prime coefficient $b_i$ of $\bar v(\path1)$ has the form
$\prod_{\a\in B'}X_{-\a}$, where $B'\subset\{\es_a-\d_b\,|\,1\le
a,b\le k\}$. Thus if we regard $K(\l)$ as a $\fg_{[k+1,r]}$-module,
then the $\fg_{[k+1,r]}$-submodule generated by $\bar v(\path1)$ is
in fact the Kac module $K(\path1_{[k+1,r]}).$ Therefore we have the
result as in (i).
\end{proof}

\begin{lemma}\label{lemma-2}
Suppose $r(\path{i}),r(\path{i}{\sc\,}')\subset[a_i,b_i]$,
$b_i<a_{i+1}$, $i=1,...,k$. Then
 $$\path{i}\drar\path{i}{\sc\,}',\,i=1,...,k
 \ \ \ \Lra\ \ \ \path{1}\!\cdots\!\path{k}\,\drar\,
 \path{1}'\!\cdots\!\path{k}'.$$
\end{lemma}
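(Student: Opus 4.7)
The plan is to prove both implications of the equivalence separately, with the reverse one being noticeably more delicate.

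For the forward direction, I would induct on $k$, using Lemma \ref{lemma-1} at each step. Given $\path{i}\drar\path{i}'$ for all $i$, one groups the blocks into two super-blocks of disjoint ranges and applies Lemma \ref{lemma-1} to update one block at a time, producing the chain
$$
\path{1}\path{2}\cdots\path{k}\drar\path{1}'\path{2}\cdots\path{k}\drar\path{1}'\path{2}'\path{3}\cdots\path{k}\drar\cdots\drar\path{1}'\cdots\path{k}'.
$$
At the $i$-th step, the fixed super-block is $\path{1}'\cdots\path{i-1}'$ (with range contained in $[a_1,b_{i-1}]$) paired against the changing super-block $\path{i}\cdots\path{k}$ (with range in $[a_i,b_k]$); disjointness of these ranges is guaranteed by hypothesis, and the single-step derivation $\path{i}\path{i+1}\cdots\path{k}\drar\path{i}'\path{i+1}\cdots\path{k}$ needed at this step is furnished by Lemma \ref{lemma-1} applied to the pair $(\path{i},\path{i}')$ and the fixed block $\path{i+1}\cdots\path{k}$. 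Transitivity of $\drar$ then yields the composite derivation.

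For the reverse direction, suppose $\path{1}\cdots\path{k}\drar\path{1}'\cdots\path{k}'$ in $P(K(\l))$; one wants to extract $\path{i}\drar\path{i}'$ for each $i$. The essential tool is the bidirectional equivalence \eqref{IF-and-only} established inside Lemma \ref{lemma-1}: with one block fixed, $\path{1}\path{3}\drar\path{2}\path{3}$ in $K(\l)$ is equivalent to $\path{1}\drar\path{2}$ in $K(\l_{[1,k]})$. If one can show that the composite derivation factors through the intermediate configuration $\path{1}'\path{2}\cdots\path{k}$, then \eqref{IF-and-only} at once extracts $\path{1}\drar\path{1}'$, and an induction on $k$ applied to the remaining blocks yields the rest. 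To establish such a factorisation, I argue that every primitive weight of $K(\l)$ lying between $\path{1}\cdots\path{k}$ and $\path{1}'\cdots\path{k}'$ in the derivation order must itself respect the block decomposition: by Brundan's characterization \eqref{Brundan-theo} together with the path interpretation of primitive weights recalled from \cite{Su}, the disjointness of the ranges $[a_i,b_i]$ forces any such intermediate path to decompose uniquely as $\nu_1\cdots\nu_k$ with each $\nu_i$ a path in the $i$-th block and $\path{i}\drar\nu_i\drar\path{i}'$.

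The main obstacle is making this block-decomposition claim for intermediate primitive weights rigorous. I would justify it by combining Brundan's bijection with the commutativity of the operators $Y_i\in\U(\fg_{[a_i,b_i]}^-)$ whose product yields the primitive vectors: as in the proof of Lemma \ref{lemma-1}(i), the analysis via Lemma \ref{Lemma-5.1.} shows that the prime coefficients of $v(\path{1}\cdots\path{k})$ are products of root vectors indexed by disjoint sets, giving $v(\path{1}\cdots\path{k})=Y_1\cdots Y_k v_\l$ with $[Y_i,Y_j]=0$ for $i\ne j$; the same holds for $v(\path{1}'\cdots\path{k}')$. This block-factorisation of primitive vectors, together with path disjointness, forces every intermediate primitive weight to respect the block structure and thereby supplies the intermediate configurations to which \eqref{IF-and-only} can be applied. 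With the block preservation in hand, the induction on $k$ closes both directions of the equivalence.
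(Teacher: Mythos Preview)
Your forward direction is fine and matches the paper's one-line appeal to Lemma~\ref{lemma-1}.

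Your reverse direction, however, has a genuine gap and proceeds quite differently from the paper. The core problem is circularity: you assert that every intermediate primitive weight $\nu$ between $\path{1}\cdots\path{k}$ and $\path{1}'\cdots\path{k}'$ decomposes as $\nu_1\cdots\nu_k$ with $\path{i}\drar\nu_i\drar\path{i}'$, but the conclusion $\path{i}\drar\nu_i$ is precisely an instance of the implication you are trying to establish. Even the combinatorial part of the claim---that the moves of $\nu$ stay inside $\cup_i[a_i,b_i]$---is not justified by Brundan's bijection alone. Your proposed factorisation $v(\path{1}\cdots\path{k})=Y_1\cdots Y_k v_\l$ with commuting $Y_i\in\U(\fg_{[a_i,b_i]}^-)$ is only available when each $\path{i}$ is a path without bridges (strongly primitive); the case distinction in Lemma~\ref{lemma-1} already signals that bridges require separate treatment. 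And even granting the factorisation, nothing in your argument explains why the chain must pass through the specific configuration $\path{1}'\path{2}\cdots\path{k}$ needed to invoke \eqref{IF-and-only}.

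The paper's argument sidesteps all of this: it never tries to factor the derivation blockwise. To extract $\path{k}\drar\path{k}'$, it first uses Lemma~\ref{lemma-1} to get $\path{k}\drar\path{1}\cdots\path{k}$, whence $\path{k}\drar\path{1}'\cdots\path{k}'$ by transitivity. When $\path{k}$ is bridge-free, the submodule $M=\U(\fg)v(\path{k})$ is a highest weight module with highest weight $\mu=\path{k}$, hence a quotient of $K(\mu)$; one then works inside $K(\mu)$, where the \emph{forward} direction (already proven) gives $\mu\drar\path{k}'\drar\path{1}'\cdots\path{k}'$, and this chain survives in $M$ because $\path{1}'\cdots\path{k}'$ does. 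When $\path{k}$ is a bridge, one replaces $M$ by a suitable highest-weight subquotient and repeats. The key idea you are missing is to peel off a \emph{single} outermost block $\path{k}$ at the top and pass to a highest weight module, rather than attempting to split the whole chain into blocks simultaneously.
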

\begin{proof}
The part ``$\Longrightarrow$'' can be obtained by Lemma
\ref{lemma-1}. Now we prove the part ``$\Longleftarrow$''. We prove
$\path{k}\drar\path{k}'$ (the proof of $\path{i}\drar\path{i}\,'$
for $i<k$ is similar). By Lemma \ref{lemma-1}, we have
$\path{k}\drar\path{1}\!\cdots\!\path{k}$. Thus
\equa{k-k111}{\path{k}\drar\path{1}'\!\cdots\!\path{k}'.} If
$\mu:=\path{k}$ is a path without bridges, then it generates a
highest weight module $M$, and so
$\path{1}'\!\cdots\!\path{k}'\preccurlyeq\mu$. This forces
$\path{k}'\preccurlyeq\mu$, and $\path{1}'\!\cdots\!\path{k}'$
corresponds to a path of $\mu$, which must have the form
$\path{1}'\!\cdots\!\path{a}'\path{ k}''$, where $a=k-1$ and
$\path{k}''$ is a path of $\mu$ such that $\path{k}'=\path{k}''$ as
weights. Note that in Kac module $K(\mu)$, we have
\equa{mamam---}{\mu\drar\path{k}''\drar\path{1}'\!\cdots\!\path{a}'\path{
k}''.}Thus in every highest weight module with highest weight $\mu$
in which \eqref{k-k111} holds, \eqref{mamam---} must also hold.
Therefore in $M$, we have \eqref{mamam---}. In particular
$\path{k}\drar\path{k}'$ in $K(\l)$.

Now suppose $\path{k}$ is a bridge. Let $\path{k}''$ be any path of
$\l$ such that $\path{k}\stackrel{e}{\drar}\path{k}''$. If
$\path{k}''\drar\path{k}'$, then we have the result. Otherwise, we
take $M$ to be the highest weight module which is the subquotient of
$K(\l)$ given by the submodule generated by $v(\path{k})$ modulo
that generated by all $\path{k}''$. Then using arguments as in the
previous paragraph, we obtain the result.
\end{proof}
\begin{lemma}\label{lemma-3}
Suppose $\mu=\path1,$ $\nu=\path2$ are two paths without bridges.
Then
$$\path1\drar\path2\ \ \ \Lra\ \ \ \path1\mbox{ \ is a subpath of \
}\path2.$$
\end{lemma}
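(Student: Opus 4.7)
The plan is to treat the two implications separately, reducing both to the indecomposable case via Lemma \ref{lemma-2}; recall that an indecomposable path without bridges on $D_\l$ moves every $\times$ within its range $[i,j]$ of $\times$-labels, so its combinatorial shape is completely determined by that range.

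For the reverse implication, suppose $\path{1}$ and $\path{2}$ are indecomposable paths without bridges with ranges $[i,j]\subseteq[i',j']$ respectively. I would interpolate a chain of paths without bridges $\path{1}=\mathbf{P}_{0}\subset\mathbf{P}_{1}\subset\cdots\subset\mathbf{P}_{s}=\path{2}$ in which each $\mathbf{P}_{k+1}$ extends $\mathbf{P}_{k}$ by incorporating one additional $\times$ at an endpoint of its range. It then suffices to prove $\mathbf{P}_{k}\drar\mathbf{P}_{k+1}$ for every $k$. Since $\mathbf{P}_{k}$ has no bridges, $v(\mathbf{P}_{k})$ is strongly primitive, so $\U(\fg)v(\mathbf{P}_{k})$ is a highest weight module of highest weight $\mathbf{P}_{k}$, hence a quotient of the Kac module $K(\mathbf{P}_{k})$. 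One extracts $v(\mathbf{P}_{k+1})$ from this quotient by the very construction used in the proof of Lemma \ref{chain-lemma-e}, applied to the subalgebra $\fg_{[a,b]}$ associated with the newly adjoined endpoint: the strongly primitive vector of the $\fg_{[a,b]}$-socle inside $\U(\fg_{[a,b]})v(\mathbf{P}_{k})\cong K(\mathbf{P}_{k,[a,b]})$ is exactly $v(\mathbf{P}_{k+1})$.

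For the forward implication, assume $\path{1}\drar\path{2}$ in $P(K(\l))$. Since $\path{1}$ has no bridges, $v(\path{1})$ is strongly primitive and $M:=\U(\fg)v(\path{1})$ is a highest weight module of highest weight $\path{1}$, in fact a quotient of $K(\path{1})$; hence $\path{2}\in P(K(\path{1}))$. Applying Brundan's theorem \eqref{Brundan-theo} to $K(\path{1})$ yields $\th\in\{0,1\}^{\sharp(\path{2})}$ with $\path{1}=R_{\th}(\path{2})$, so $\path{1}$ is obtained from $\path{2}$ by a right path on $D_{\path{2}}$. To conclude that $\path{1}$ is a subpath of $\path{2}$ as a path on $D_\l$, one reads off the correspondence between the right moves comprising $R_{\th}$ and the left moves comprising $\path{2}$: each right move of $R_{\th}$ on $D_{\path{2}}$ reverses exactly one of the left moves of $\path{2}$ on $D_\l$ by returning a displaced $\times$ to its original position, and the left moves of $\path{2}$ not thereby undone assemble into a left path on $D_\l$ equal to $\path{1}$.

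The main obstacle is the combinatorial bookkeeping of this last step: one must verify carefully that the right-move factorization $\path{1}=R_{\th}(\path{2})$ on $D_{\path{2}}$ corresponds precisely to deleting a subsequence of the left moves of $\path{2}$ viewed as a path on $D_\l$, and that the residual collection of left moves satisfies the path conditions (1)--(3) of Definition \ref{left-path}. The no-bridges hypothesis on both $\path{1}$ and $\path{2}$ is used essentially here: it aligns the $\times$-labels across the two diagrams $D_\l$ and $D_{\path{2}}$ and prevents the kind of interleaving of moves that would obstruct clean passage between right moves on $D_{\path{2}}$ and left moves on $D_\l$.
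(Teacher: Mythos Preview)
Your plan diverges from the paper's proof in both directions, and in each case leaves a genuine gap.

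For the forward implication, you correctly begin with $\nu=\path2\in P(K(\mu))$ and $\mu=R_{\th}(\nu)$ via Brundan's theorem. But your proposed mechanism---showing directly that each right move in $R_\th$ on $D_\nu$ returns a $\times$ to its original position in $D_\l$---is precisely the step you flag as the ``main obstacle'', and you do not carry it out. The paper avoids this entirely. It invokes the {\em second} right-path expression $\l=R_\Theta(\nu)$ (available since $\nu\in P(K(\l))$ as well), decomposes $R_\Theta(\nu)$ into indecomposable blocks, and then argues: if $R_\th$ contained a right move $R_i(\nu)=R_{ij}(\nu)$ not appearing in $R_\Theta$, one could (using Remark~\ref{remark-path}(3) and the no-bridges hypothesis) take it to lie between two consecutive blocks of $R_\Theta$, and then the $j$-th coordinate of $\mu=R_\th(\nu)$ would exceed that of $\l=R_\Theta(\nu)$, contradicting $\mu\preccurlyeq\l$. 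Thus $R_\th$ is a sub-right-path of $R_\Theta$, which translates back to $\path1$ being a left subpath of $\path2$. The comparison of two right paths from $\nu$ via the partial order $\preccurlyeq$ is the key idea you are missing.

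For the reverse implication, your interpolation scheme has two difficulties. First, after using Lemma~\ref{lemma-2} to make $\path2$ indecomposable, there is no reason $\path1$ is itself indecomposable, so the picture of nested ranges $[i,j]\subseteq[i',j']$ with endpoint extensions is not immediately available. Second, and more seriously, your claim that $\U(\fg_{[a,b]})v(\mathbf{P}_k)\cong K(\mathbf{P}_{k,[a,b]})$ is not justified: in the proof of Lemma~\ref{chain-lemma-e} this holds because one starts from $v_\l$ itself, whereas here $v(\mathbf{P}_k)$ is a nontrivial primitive vector whose prime coefficients may involve odd root vectors inside $\fg_{[a,b]}$. Likewise, identifying the $\fg_{[a,b]}$-socle generator with the $\fg$-primitive vector $v(\mathbf{P}_{k+1})$ rests on a commutation argument established only for $\fg_{[1,k]}$. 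The paper's route is shorter: having reduced to $\path2$ indecomposable with range $[a,b]$, it first restricts to $\fg_{[1,b]}$ (so one may take $b=r$), then passes to $\fg_0$-lowest weight vectors as in part~(ii) of the proof of Lemma~\ref{lemma-1} to restrict further to $\fg_{[a,r]}$ (so one may take $a=1$). Now $\path2$ is the bottom path of $K(\l)$, and $\path1\drar\path2$ is automatic since the bottom composition factor lies in every nonzero submodule.
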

\begin{proof}
Suppose $\path1\drar\path2$. Since $\path1$ is strongly primitive in
$K(\l)$ (cf.~Remark \ref{remark-path}(2)), we obtain $\nu\in
P(K(\mu))$. Thus we have $\mu=L_{\th}(\nu)$ and $\l=R_{\Theta}(\nu)$
(cf.~Definition \ref{right-path}). Similar to the definition of
blocks of a left path,
 we can divide the right
path $R_{\Theta}(\nu)$ into the disjoint sum of its indecomposable
blocks, say $R_{\Theta}(\nu)=\path{1}\cdots \path{k}$ (here we use
$\path{i}{\sc\,}$'s to denote right paths). If $R_{\th}(\nu)$ is not
a subpath of $R_\Theta(\nu)$, then $R_{\th}(\nu)$ contains at least
a right move $R_i(\nu)=R_{ij}(\nu)$  which does not appear in
$R_\Theta(\nu)$. By Remark \ref{remark-path}(3), we may suppose that
$R_i(\nu)$ is a right move which is after $\path{a}$ but before
$\path{b}$, where $b=a+1$ for some $1\le a\le k$. Then one sees that
the $j$-th entry of $\mu=R_\th(\nu)$ is larger than that of
$\l=R_\Theta(\nu)$. Thus $\mu\not\preccurlyeq\l$, contradicting
$\mu\in P(K(\l))$. Thus $R_{\th}(\nu)$ is a right subpath of
$R_\Theta(\nu)$. It then follows that $\path1$ is a (left) subpath
of $\path2$.

Next suppose $\path1$ is a subpath of $\path2$. Dividing $\path2$
into the disjoint sum of its blocks and dividing $\path1$ into a
disjoint sum of subpaths accordingly, then by using Lemma
\ref{lemma-2}, we can suppose that $\path2$ is indecomposable. So
suppose $r(\path2)=[a,b]$ and $\ell(\path2)=b+1-a$ (cf.~Remark
\ref{remark-path}(3)). Thus we can regard $\path2$ as the path for
$\fg_{[1,b]}$, i.e., we can suppose $r=b$. By considering
$\fg_0$-lowest weight vectors (as in part (ii) of the proof of Lemma
\ref{lemma-1}), and observing that each $\fg_0$-lowest weight vector
of a path $\path{i}$ with range in $[a,b]$ is the same as that of
the $(\fg_{[a,b]})_0$-lowest weight vector of $\path{i}$
\mbox{regarded} as a path for the Lie superalgebra $\fg_{[a,b]}$, we
can regard $\path1$, $\path2$ as paths for $\fg_{[a,b]}$, i.e., we
can suppose $a=1,b=r$. But in this case $\path2$ is the bottom path
(cf.~Remark \ref{remark-path} (1)). Therefore $\path1\drar\path2$.
\end{proof}

\begin{remark}
It follows from Lemma \ref{lemma-3} that we have the following exact
chain of length $r=\sharp(\l)$ for the Kac module $K(\l)$:
\equa{long-chain}{L_\emptyset\rar L_{[11]}\rar
L_{[1,2]}\rar\cdots\rar L_{[1,r]},} where the paths $L_{[1,k]}$ are
defined in Remark \ref{remark-path}(2). In particular, $L_\emptyset$
corresponds to $K(\lambda)$ itself and
$L_{[1,r]}$ to the bottom composition factor $\bar{L}(\lambda)$.
The exactness of the chain
is deduced from Corollary \ref{Coro--}. Note that \eqref{long-chain}
is nothing else but the chain \eqref{longest}.
\end{remark}

The following result can be easily proven by using
Theorem \ref{semi-simple} and Theorem \ref{length}.
\begin{lemma}\label{lemma-0}
For every path $P$ with
length $k$, there exists an exact chain of length $r=\sharp(\l)$ in
$P(K(\l))$ of the form
\equa{path1-longest-chain}{L_\emptyset=\path{0}\rar
\path{1}\rar\cdots\rar\path{r}=L_{[1,r]},\ \ \mbox{ with \ \ }
\path{k}=P.}
\end{lemma}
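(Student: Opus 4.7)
The plan is to leverage Theorem \ref{length} and Theorem \ref{semi-simple}, which together assert that the Jantzen filtration
$K(\l) = K^0(\l) \supset K^1(\l) \supset \cdots \supset K^r(\l) \supset \{0\}$
is the unique Loewy filtration of $K(\l)$, with each layer $K_i = K^i(\l)/K^{i+1}(\l)$ semisimple and nonzero. Since the multiplicity of every composition factor of $K(\l)$ is at most one \cite{B, VZ}, each primitive weight $\mu \in P(K(\l))$ has a well-defined \emph{depth} $d(\mu) \in \{0, 1, \dots, r\}$, the unique index such that $L(\mu)$ is a composition factor of $K_{d(\mu)}$. The relation $\mu \drar \nu$ forces $d(\nu) > d(\mu)$, because $\U(\fg) v_\nu \subsetneq \U(\fg) v_\mu$ and the Loewy structure of $K(\l)$ rules out $d(\nu) \le d(\mu)$.

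The key combinatorial input is the equality $d(P) = \ell(P)$ for every path $P$ on $D_\l$. The most direct route is to invoke Theorem \ref{KL} together with the explicit formula $a_{\l P}(q) = q^{\ell(P)}$ for the inverse Kazhdan--Lusztig polynomials from \cite{B, VZ}. A more self-contained derivation uses the block decomposition $P = P_1 \cdots P_b$ of an arbitrary path, combined with Lemma \ref{lemma-2} and Lemma \ref{lemma-3}, to reduce to the case of an indecomposable bridgeless path on a range $[a,b]$; here the depth equals the length by inspection of the reference chain \eqref{longest} applied to the sub-Kac-module $K(\l_{[a,b]})$.

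With these ingredients, I would build the chain through $P$ in two segments. For the downward part $P = \path k \rar \path{k+1} \rar \cdots \rar \path r = L_{[1,r]}$, I would apply Lemma \ref{chain-lemma-e} to the indecomposable submodule $M_P = \U(\fg) v_P$, whose induced radical filtration has length $r - k$ and terminates at $\bar L(\l) = L_{[1,r]}$; the construction in the proof of Lemma \ref{chain-lemma-e}, adapted to $M_P$ via its restriction to nested sub-superalgebras $\fg_{[a,b]}$, produces primitive weights $\path{k+j}$ at depth $k + j$ forming the required chain. For the upward part $L_\emptyset = \path 0 \rar \path 1 \rar \cdots \rar \path k = P$, I would iteratively lift from depth $d$ to depth $d-1$: given $\nu$ at depth $d > 0$, the semisimplicity of $K^{d-1}/K^d$ together with the multiplicity-one property forces $v_\nu$ to lie in some $\U(\fg) v_\mu$ with $d(\mu) = d-1$, producing a direct predecessor. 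Concatenating the two segments yields a chain of length $r$ through $P$; its exactness follows from Corollary \ref{Coro--}, since any non-direct step could be refined, producing a chain of length exceeding $r$.

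The main obstacle is the upward extension: verifying that $v_\nu$ genuinely lies in some $\U(\fg) v_\mu$ with $d(\mu) = d(\nu) - 1$ at each step. The multiplicity-one of composition factors in Kac modules of $\mathfrak{gl}_{m|n}$ is essential here, because it forces the component of $v_\nu$ along the semisimple layer $K^{d-1}/K^d$ to be concentrated on a single summand, pinning down the required primitive generator $\mu$ and allowing the iteration all the way up to $L_\emptyset$. An alternative route, which sidesteps this subtlety, is to run the downward construction on the $\fg_0$-lowest-weight side (as in part (ii) of the proof of Lemma \ref{lemma-1}) applied to the image of $\bar v_P$, dualising the submodule picture into a quotient picture.
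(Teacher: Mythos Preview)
Your approach is valid in outline but considerably more elaborate than what the paper does, and two of your constructive steps are shaky. The paper's proof is essentially a two-line abstract argument: the chain $L_\emptyset\drar P\drar L_{[1,r]}$ is immediate (every primitive vector lies in $\U(\fg)v_\l$, and every nonzero submodule contains the bottom factor), one refines it to an exact chain of some length $l\le r$ by Corollary~\ref{Coro--}, and then rigidity (Theorem~\ref{semi-simple}) rules out $l<r$ because otherwise the radical and socle depths of $P$ would differ. No layer-by-layer construction is needed.

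By contrast, your downward step proposes to ``adapt'' the proof of Lemma~\ref{chain-lemma-e} to the submodule $M_P=\U(\fg)v_P$. That proof depends essentially on the fact that $\U(\fg_{[1,k]})v_\l$ is a Kac module for $\fg_{[1,k]}$, something which fails for $\U(\fg_{[1,k]})v_P$ in general; the adaptation is not straightforward and is in any case unnecessary given that $P\drar L_{[1,r]}$ is trivial. Your upward step is correct in substance but the stated justification is confused: $v_\nu\in K^d$ has \emph{zero} image in $K^{d-1}/K^d$, so there is no ``component along the semisimple layer'' to speak of. The correct argument is that $K^{d-1}=\sum_j \U(\fg)v_{\mu_j}$ (with $d(\mu_j)=d-1$), so $L(\nu)$ is a composition factor of some $\U(\fg)v_{\mu_j}$, and multiplicity one in $K(\l)$ then forces the primitive vector of weight $\nu$ in that submodule to coincide with $v_\nu$.

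One point where your treatment improves on the paper: you explicitly verify $k=\ell(P)$ by invoking Theorem~\ref{KL} and the monomial form $a_{\l\mu}(q)=q^{|\theta|}$ of the inverse Kazhdan--Lusztig polynomials. The paper's proof ends with ``$\path{k}=P$ for some $k$'' and the identification of that $k$ with the Jantzen layer, leaving the equality with $\ell(P)$ implicit; your observation makes this step transparent and is exactly what is used when the lemma is applied in Theorem~\ref{theo-primitive}.
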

\begin{proof}
Obviously the chain $L_\emptyset\drar P\drar L_{[1,r]}$ is in $P(K(\l))$.
We can always insert vertices in the intervals $[L_\emptyset, P]$
and $[P, L_{[1,r]}]$ to turn it into an exact chain, the length of
which will be denoted by $l$. Theorem \ref{length} and Corollary
\ref{Coro--} require $l\le r$. If $l<r$, the socle and radical
filtrations of $K(\l)$ would not coincide, contradicting the fact
that the Jantzen filtration for $K(\l)$ is the unique Loewy
filtration (Theorem \ref{semi-simple}). Hence the resulting exact
chain must be of the form \eqref{path1-longest-chain} with
$\path{k}=P$ for some $k$, where the submodule corresponding to
$\path{i}$ belongs to the $i$-th layer of the Jantzen filtration.
\end{proof}
\begin{lemma}\label{lemma-bridge}
A path with bridges is not strongly primitive.
\end{lemma}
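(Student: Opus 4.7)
The plan is to argue by contradiction via chain lengths in the primitive weight graph. As in the proof of Lemma \ref{chain-lemma-e}, Theorem \ref{equ-cates} reduces us to the case $\fg=\mathfrak{gl}_{r|r}$ with $\lambda$ being $r$-fold atypical. Suppose, for contradiction, that $v:=v(P)$ is strongly primitive yet $P$ has a bridge; then $\ell:=\ell(P)\ge 1$ because $L_\emptyset$ has no bridges.

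Strong primitivity of $v$ forces $M:=\U(\fg)v=\U(\fg_{-1})v$ to be a highest weight $\fg$-module with highest weight $\mu:=P$, so $M\cong K(\mu)/W$ for some submodule $W\subset K(\mu)$. Because $v\in K^\ell(\lambda)$ and $K^\ell(\lambda)$ is a $\fg$-submodule of $K(\lambda)$, we have $M\subset K^\ell(\lambda)$. Combined with the semisimplicity of the Jantzen layers (Theorem \ref{semi-simple}) and the fact that any derivation between two primitives of $P(K(\lambda))$ strictly increases the Jantzen layer index, this forces every chain in $P(M)$ starting at $\mu$ to have length at most $r-\ell$. On the other hand, $\sharp(\mu)=\sharp(\lambda)=r$, so Lemma \ref{lemma-0} supplies an exact chain $\mu=Q_0\rrar Q_1\rrar\cdots\rrar Q_r$ of length $r$ in $P(K(\mu))$. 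Pushing this chain forward under the surjection $\pi:K(\mu)\twoheadrightarrow M$, the surviving $Q_i$'s inherit a chain in $P(M)$ of length $r-t$, where $t=\#\{i:v(Q_i)\in W\}$; the preceding bound forces $t\ge\ell$.

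The contradiction will come from exploiting the bridge in $P$ to produce an extra primitive weight of $M$ that extends the surviving chain beyond length $r-\ell$. Concretely, I would translate the bridge position $b$ inside the move $(i_a,j_a)$ of $P$, via the path-correspondence of Section \ref{weight-diagrams} between $D_\lambda$ and $D_\mu$, into an additional left move on $D_\mu$ whose associated primitive weight $Q'$ of $K(\mu)$ is shown, using the subpath criterion of Lemma \ref{lemma-3} and the multiplicity-one property of Theorem \ref{semi-simple}, to satisfy $v(Q')\notin W$. Inserting $Q'$ between appropriate surviving $Q_i$'s by means of the concatenation machinery of Lemmas \ref{lemma-1}--\ref{lemma-2} would then yield a chain in $P(M)$ of length $r-t+1>r-\ell$, the desired contradiction. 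The main obstacle is precisely this last combinatorial-homological step: pinning down the bridge-induced primitive $Q'$, verifying that it survives the quotient by $W$, and confirming that it genuinely extends the chain rather than coinciding with one of the $Q_i$. This comparison of paths on $D_\lambda$ versus $D_\mu$ in the presence of bridges is where all the combinatorics of Section \ref{weight-diagrams} and the structural results of Lemmas \ref{lemma-1}--\ref{lemma-3} have to come together.
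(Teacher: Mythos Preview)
Your proposal has a genuine gap. The chain-length bookkeeping you set up (culminating in $t\ge\ell$) is not by itself a contradiction: exactly the same bookkeeping goes through for a bridgeless path of length $\ell$, and those \emph{are} strongly primitive. The bridge hypothesis enters only at the step you explicitly flag as unfinished---producing the extra primitive $Q'$, showing it survives in $M$, and showing it genuinely extends the chain. That is not a detail to be filled in later; it is the entire content of the lemma, and your outline offers no concrete mechanism for constructing $Q'$ beyond the hope that Lemmas~\ref{lemma-1}--\ref{lemma-3} will somehow supply it.

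The paper's argument is far shorter and avoids chain-counting entirely. After reducing (via Lemma~\ref{lemma-2} and the restriction technique from the proof of Lemma~\ref{lemma-3}) to an indecomposable bridge path $\path1$ with range $[1,r]$, so that $\path1$ necessarily contains the move $L_{1r}$, one simply compares weights in the partial order~$\preccurlyeq$. The bottom composition factor $\bar L(\lambda)$ lies in \emph{every} nonzero submodule of $K(\lambda)$, so its primitive weight $L_B$ is derived from $\path1$. But one checks directly from the diagrams that $L_B\not\preccurlyeq\path1$. Were $v(\path1)$ strongly primitive, $\U(\fg)v(\path1)$ would be a highest weight module with highest weight $\path1$, forcing every primitive weight in it---in particular $L_B$---to be $\preccurlyeq\path1$. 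This is the contradiction. The idea you missed is that strong primitivity imposes a \emph{weight} constraint (all weights in the generated submodule lie below the highest weight in the order $\preccurlyeq$), and the bridge visibly violates that constraint via the ever-present bottom factor; no delicate chain combinatorics is required.
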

\begin{proof}
Let $\path1$ be a path with bridges. By Lemma \ref{lemma-2}, we can
assume $\path1$ is indecomposable. Furthermore as in the proof of
Lemma \ref{lemma-3}, we can suppose $r(\path1)=[1,r]$. In this case,
$\path1$ must contain the left path $L_{1r}$. One can easily see
that the bottom path $L_B$ (cf.~Remark \ref{remark-path}(2)), whose
primitive vector can be generated by that of $\path1$, is not
$\preccurlyeq \path1$. Therefore, $\path1$ cannot be strongly
primitive.
\end{proof}

\subsection{Primitive weight graphs of Kac modules}
The following theorem completely determines the primitive weight
graph of the Kac module $K(\l)$.
\begin{theorem}\label{theo-primitive}
For any two path $\path1$ and $\path2$, we have $\path1\rar\path2$
if and only if $\ell(\path1)=\ell(\path2)-1$, and one of the
following holds.
\begin{enumerate}
\item $\path1$ is a subpath of $\path2$;
\item $\path1$ is a bridge, and $\path2$ is obtained from
$\path1$ by replacing some left move $L_{ij}$ appearing in $\path1$
with $i<j$ by two moves $L_{ia}$, $L_{bj}$ for some  $i\le a\le j$
and $b$ satisfying $a\le b\le j$ being the smallest such that the
result is a path.
\end{enumerate}
\end{theorem}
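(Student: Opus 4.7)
The approach is to combine the Jantzen/Loewy machinery (Theorem \ref{semi-simple}, Theorem \ref{length}, Lemma \ref{lemma-0}) with the block-wise reduction afforded by Lemmas \ref{lemma-1}--\ref{lemma-3}, and the recognition of bridges in Lemma \ref{lemma-bridge}. The first observation is that if $\path1 \rar \path2$, then by Lemma \ref{lemma-0} both paths sit in an exact chain of length $r=\sharp(\lambda)$ at positions equal to their respective lengths; since the Jantzen filtration is the unique Loewy filtration, a covering relation must jump exactly one layer. This forces $\ell(\path2)=\ell(\path1)+1$ and simultaneously reduces the claim $\path1 \rar \path2$ to the existence of a derivation $\path1 \drar \path2$ consistent with this length discrepancy.

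For sufficiency, suppose $\ell(\path2)=\ell(\path1)+1$ and either (1) or (2) holds. In case (1), if $\path1$ is bridge-free, decompose $\path1$ and $\path2$ along their common indecomposable block structure and apply Lemma \ref{lemma-2} to reduce to the indecomposable case, where Lemma \ref{lemma-3} immediately produces $\path1 \drar \path2$. If $\path1$ carries a bridge but is still a subpath of $\path2$, the derivation is built by a blockwise argument using Lemmas \ref{lemma-1} and \ref{lemma-2}, restricting attention to the ranges containing the added move. In case (2), the replacement $L_{ij}\mapsto L_{ia}L_{bj}$ increases the length by exactly one, and the minimality of $b$ is precisely the combinatorial condition from Definition \ref{left-path} ensuring that $\path2$ is a valid path; one then localizes to a subalgebra $\fg_{[i_0,j_k]}$ via Lemma \ref{lemma-1} and verifies the direct derivation by an explicit computation with the primitive vector expansion \eqref{leading-v}.

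For necessity, assume $\path1 \rar \path2$. If $\path1$ has no bridges, then Remark \ref{remark-path}(2) says $\path1$ is strongly primitive, so $\U(\fg)v(\path1)$ is a highest weight module inside which $\path2$ lies. Decomposing into indecomposable blocks via Lemmas \ref{lemma-1} and \ref{lemma-2} and then appealing to Lemma \ref{lemma-3} forces $\path1$ to be a subpath of $\path2$, yielding case (1). If $\path1$ has a bridge, then by Lemma \ref{lemma-bridge} it is not strongly primitive, so $\U(\fg^+)\fg^+ v(\path1)\ne\{0\}$; the analysis splits according to whether the cover $\path2$ is realized inside the highest weight subquotient generated by $v(\path1)$ (yielding case (1)) or through the extra structure contributed by $\fg^+$ (yielding case (2)). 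In the latter, comparing leading prime terms in \eqref{leading-v} via Lemma \ref{Lemma-5.1.} isolates the single bridge of $\path1$ being broken and pins down $b$ as the minimal index rendering the resulting weight diagram a valid path.

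The main obstacle is the necessity direction for bridge paths: showing that every direct cover $\path1 \rar \path2$ with $\path1$ a bridge arises from the explicit $L_{ij}\mapsto L_{ia}L_{bj}$ replacement with $b$ minimal. This demands careful manipulation with the normal forms \eqref{leading-v} and \eqref{leading-lowv} for primitive vectors, and repeated use of Lemma \ref{Lemma-5.1.} to constrain which primitive weights can be generated from $v(\path1)$ in a single covering step. The minimality of $b$ should correspond to the combinatorial fact that any larger valid $b$ would interpose an intermediate path of length $\ell(\path1)+1$ strictly between $\path1$ and the candidate $\path2$, contradicting directness.
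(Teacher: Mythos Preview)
Your high-level structure is sound: the Loewy/Jantzen machinery does force $\ell(\path2)=\ell(\path1)+1$ and reduces $\rar$ to $\drar$ together with that length constraint, and the block reductions via Lemmas \ref{lemma-1}--\ref{lemma-3} are the right tools for localising. But two essential ingredients of the paper's proof are missing from your proposal.

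First, in the sufficiency direction you appeal to Lemma \ref{lemma-3} for case (1), but that lemma only covers paths without bridges; when $\path1$ (or $\path2$) has a bridge your ``blockwise argument using Lemmas \ref{lemma-1} and \ref{lemma-2}'' is not enough by itself to produce $\path1\drar\path2$. The paper closes this gap by passing to the dual Kac module $K(\lambda)^* \cong K(\lambda^\#)$ and observing that the path corresponding to $\path2$ on the dual side has strictly smaller depth (because $\path2$ contains $L_{1r}$ and the dualisation fixes the relevant extremal coordinate). This sets up an induction on depth that simultaneously handles case (1) with bridges and case (2). Your ``explicit computation with the primitive vector expansion \eqref{leading-v}'' does not substitute for this; there is no indication of how such a computation would terminate or why it would succeed when bridges prevent the primitive vector from being strongly primitive.

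Second, in the necessity direction you split on whether $\path1$ is a bridge, but the paper's actual dichotomy is different and sharper: since $\path1\rar\path2$ is a covering, either $\nu\rar\mu$ or $\nu\lar\mu$ is realised as a highest weight module (equation \eqref{Eother-}). In the first case one gets $\mu\in P(K(\nu))$, so $\mu$ is a single left move away from $\nu$, giving case (1). In the second case one has $\nu\in P(K(\mu))$, and the same dualisation/induction-on-depth argument from the sufficiency direction forces case (2). Your approach via prime-term comparison (Lemma \ref{Lemma-5.1.}) to ``isolate the single bridge being broken'' is not worked out, and it is not clear it can be made to yield the minimality of $b$; the paper never argues the minimality directly in this way but obtains it as a by-product of the inductive reduction.
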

\begin{proof} Suppose
$\ell(\path1)=\ell(\path2)-1$. First assume $\path1$ is a subpath of
$\path2$. We can suppose $\path2$ is indecomposable by Lemma
\ref{lemma-2}. Suppose $r(\path2)=[a,b]$. Then
$r(\path1)\subset[a,b]$. As in the proof of Lemma \ref{lemma-3}, we
can regard $\path1,\,\path2$ as paths for $\fg_{[a,b]}$. Thus without
losing generality, we can suppose $a=1,b=r$.  Being indecomposable,
$\path2$ must contain the left move $L_{1r}$. We denote
$\nu=\path1,\,\mu=\path2$.

Consider the dual Kac module $K(\l)^*$, which is the Kac module
$K(\l^\#)$ with $\l^\#=2\rho_1-\bar\l$ (recall \eqref{bar-lambda}
for notation $\bar\l$) by noting that the lowest weight in $K(\l)$
is $\overline{\l-2\rho_1}$ $=\bar\l-2\rho_1$. Note that the dual
module of any $L_\mu$ is the module $L_{\mu^*}$ with
$\mu^*=2\rho_1-\overline{R_{\Theta}(\mu)}$, where
$\Theta=(1,...,1)$. Since $\path2$ contains the left move $L_{1,r}$,
we see that when we write $\l$ in terms of right paths of $\mu$, we
must have $\l=R_{\theta}(\mu)$ for some $\theta\in\{0,1\}^r$ with
$\th_1=1$, and we obtain that the first entries of $\l$ and
$R_{\Theta}(\mu)$ are the same. This shows that the $r$-th entries
of $\l^\#$ and $\mu^*$ are the same. This implies that when we write
$\mu^*$ as a left path of $\l^\#$, which we denoted by $\path{2}^*$,
we must have $r(\path{2}^*)\subset[2,r]$. In particular $\path{2}^*$
has depth $<r=d(\path2)$. Thus by induction on the depth of the
path, we can assume $\nu^*\rar\mu^*$ in $K(\l)^*$. Thus
$\path1\rar\path2$ in $K(\l)$.

Next suppose case (2) of Theorem \ref{theo-primitive} occurs. In
this case we can suppose $\path1$ is indecomposable and
$r(\path1)=[1,r]$. Then we can prove $\path1\rar\path2$ in a similar
way as above.

Now suppose $\path1\rar\path2$. By Lemma \ref{lemma-0}, we must have
$\ell(\path2)\ge\ell(\path1)+1.$ Denote $\nu=\path1,\,\mu=\path2$.
Then \equa{Eother-}{\mbox{either \ \ $\nu\rar\mu$ \ \ or \ \
$\nu\lar\mu$ \ \ is a highest weight module.}} In the former case,
$\mu\in P(K(\nu))$. So $\mu$ corresponds to a path of $\nu$. Denote
this path by $\path2'$, then we must have $\ell(\path2')=1$.
Otherwise we do not have $\nu\rar\mu$ in the Kac module $K(\nu)$ by
Lemma \ref{lemma-3}, and the first case of \eqref{Eother-} cannot
happen. Suppose $\path2'=L_{ab}$ for some $a,b$. Then as paths of
$\l$, we must have $\path2=\path1L_{ab}$ (i.e., $\path2$ is obtained
from $\path1$ by adding one more move $L_{ab}$), otherwise $\mu$
cannot be a primitive weight of $\l$. Thus
$\ell(\path1)=\ell(\path2)-1$ and $\path1$ is a subpath of $\path2$.

In the second case of \eqref{Eother-},  we have
\equa{Last-equa}{\mbox{$\nu\stackrel{e}{\rar}\mu$ \ in \ $K(\l)$.}}
Thus $\nu\in P(K(\mu))$. If $\ell(\path1)<\ell(\path2)-1$, then in
the Kac module $K(\mu)$, there exists some $\tau\in P(K(\mu))$ such
that $\nu\dlar\tau\dlar\mu$. In turn, we must have
$\nu\drar\tau\drar\mu$ in $K(\l)$, a contradiction with
\eqref{Last-equa}. Thus $\ell(\path1)=\ell(\path2)-1$. From this and
Lemma \ref{lemma-2}, we can then assume that $\path1$ is
indecomposable. Thus as above, we can suppose $r(\path{1})=[1,r]$.
Now using similar arguments to those in the second paragraph of this
proof,  by induction on the depth of the path, we can prove that
$\path2$ is obtained from $\path1$ by case (2) of the theorem.
\end{proof}

\begin{remark}
Part (2) of Theorem \ref{theo-primitive} can also be expressed in
terms of permissible codes or boundary strip removals of the
composite Young diagram of $\l$ \cite{HKV, Su}.
\end{remark}

\medskip

\noindent{\bf Acknowledgement}: We thank Catharina Stroppel for
correspondences. This work was support by the Australian Research
Council and National Science Foundation of China (grant
no.~10825101).

\end{document}